\pgfplotsset{compat=1.17}
\newcommand{\externaldirectory}{external/}
\newcommand{\st}{\;|\;}
\newcommand{\RR}{\mathbb{R}}
\newcommand{\PP}{\mathbb{P}}
\newcommand{\PPdck}{\mathbb{P}_{\mathrm{dc}}^k(\mcT_h)}
\newcommand{\PPdczero}{\mathbb{P}_{\mathrm{dc}}^0(\mcT_h)}
\newcommand{\PPck}{\mathbb{P}_{\mathrm{c}}^k(\mcT_h)}
\newcommand{\mcP}{\mathcal{P}}
\newcommand{\mcK}{\mathcal{K}}
\newcommand{\mcE}{\mathcal{E}}
\newcommand{\mcN}{\mathcal{N}}
\newcommand{\mcF}{\mathcal{F}}
\newcommand{\mcA}{\mathcal{A}}
\newcommand{\mcT}{\mathcal{T}}
\newcommand{\mcH}{\mathcal{H}}
\newcommand{\mcO}{\mathcal{O}}
\newcommand{\tn}{|\mspace{-1mu}|\mspace{-1mu}|}
\newcommand{\jump}[1]{[#1]}
\newcommand{\mrm}[1]{\mathrm{#1}}
\newcommand{\Gammah}{{\Gamma_h}}
\newcommand{\bnabla}{b \cdot \nabla}
\newcommand{\bhnabla}{b_h \cdot \nabla}
\newcommand{\btildenabla}{\widetilde{b}_h\cdot \nabla}
\newcommand{\nablas}{\nabla_{\Gamma}}
\newcommand{\nablash}{\nabla_{\Gamma_h}}
\newcommand{\bhnablash}{b_h \cdot \nablash}
\newcommand{\ns}{n_{\Gamma}}
\newcommand{\nsh}{n_{\Gamma_h}}
\newcommand{\Ps}{{P}_\Gamma}
\newcommand{\Psh}{{P}_{\Gamma_h}}
\newcommand{\foralls}{\forall\,}
\newcommand{\dx}{\,\mathrm{d}x}
\newcommand{\dr}{\,\mathrm{d}r}
\newcommand{\tnup}{\tn_{\mathrm{up}}}
\newcommand{\tnuph}{\tn_{\mathrm{up},h}}
\newcommand{\tnsd}{\tn_{\mathrm{sd}}}
\newcommand{\tnsdh}{\tn_{\mathrm{sd},h}}
\newcommand{\tnsdasth}{\tn_{\mathrm{sd}\ast,h}}
\newcommand{\tnsdast}{\tn_{\mathrm{sd}\ast}}
\newcommand{\btilde}{{\widetilde{b}}}
\newcommand{\onehalf}{\nicefrac{1}{2}}
\DeclareFontFamily{U}{MnSymbolC}{}
\DeclareSymbolFont{MnSyC}{U}{MnSymbolC}{m}{n}
\DeclareFontShape{U}{MnSymbolC}{m}{n}{
    <-6>  MnSymbolC5
   <6-7>  MnSymbolC6
   <7-8>  MnSymbolC7
   <8-9>  MnSymbolC8
   <9-10> MnSymbolC9
  <10-12> MnSymbolC10
  <12->   MnSymbolC12}{}
\DeclareMathSymbol{\intprod}{\mathbin}{MnSyC}{'270}
\DeclareMathOperator{\Id}{Id}
\DeclareMathOperator*{\essinf}{ess\, inf}
\numberwithin{equation}{section}
\numberwithin{figure}{section}
\numberwithin{table}{section}
\newtheorem{theorem}{Theorem}[section]
\newtheorem{lemma}[theorem]{Lemma}
\newtheorem{corollary}[theorem]{Corollary}
\newdefinition{remark}[theorem]{Remark}
\newdefinition{definition}[theorem]{Definition}
\newproof{proof}{Proof}
\numberwithin{equation}{section}
\journal{}
\begin{document}

\begin{frontmatter}
      \title{Stabilized cut discontinuous Galerkin methods
      for advection-reaction problems on surfaces}
    
    \author[ntnu]{Tale Bakken Ulfsby}
    \ead{talebu@math.uio.no}
    
    \author[ntnu,umu]{Andr\'e Massing\corref{cor1}}
    \ead{andre.massing@ntnu.no}
    \cortext[cor1]{Corresponding author}
    
    \author[uppsala,umu]{Simon Sticko}
    \ead{simon@sticko.se}
    
    \address[ntnu]{Department of Mathematical Sciences, Norwegian University of Science and Technology, NO-7491 Trondheim, Norway.
    }
    \address[umu]{Department of Mathematics and Mathematical Statistics, Ume{\aa} University, SE-90187 Ume{\aa}, Sweden}
    \address[uppsala]{Department of Information Technology, Uppsala University, Box 337, 751\,05 Uppsala, Sweden}

    \begin{abstract} 
    We develop a novel cut discontinuous Galerkin (CutDG) method for 
    stationary advection-reaction problems on surfaces embedded in $\RR^d$.
    The CutDG method is based on 
    embedding the surface into a full-dimensional background mesh
    and using the associated discontinuous piecewise polynomials of order $k$
    as test and trial functions.
    As the surface can cut through the mesh in an arbitrary fashion,
    we design a suitable stabilization that enables us to 
    establish inf-sup stability, a priori error estimates, and condition number
    estimates using an augmented streamline-diffusion norm.
    The resulting CutDG formulation is geometrically robust 
    in the sense that all derived theoretical results hold with
    constants independent of any particular cut configuration.
    Numerical examples support our theoretical findings.
    \end{abstract}
    
    \begin{keyword}
    Surface PDE \sep advection-reaction problems \sep discontinuous Galerkin \sep cut finite element method 
    \end{keyword}
\end{frontmatter}

\section{Introduction}
\subsection{Background and earlier work}
\label{ssec:background}
Advection-dominated transport processes on surfaces appear in many
important phenomena in science and engineering.  Prominent
applications include flow and transport problems in porous media when
large-scale fracture networks are modeled as composed 2D surfaces
embedded into a 3D bulk
domain~\cite{AlboinJaffreRobertsEtAl2002,AdlerThovertMourzenko2012,Fumagalli2012,BurmanHansboLarsonEtAl2018a}.
Another important instance arises when modeling incompressible
multi-phase flow problems with surfactants~\cite{GanesanTobiska2009,
GrossReusken2011,MuradogluTryggvason2008,GrosReusken2013}, where
potentially low surface diffusion coefficients lead to large surface
Péclet numbers~\cite{AgrawalNeuman1988} in the surface-bounded
surfactants transport model.
Numerical methods for these applications must not only remain stable
and accurate when solving the underlying partial differential
equations (PDEs) in the advection-dominant regime, but preferably
should also be able to handle complicated and evolving surface
geometries with ease. 
As a potential remedy, unfitted finite element methods known as
\emph{cut finite element methods}
(CutFEM)~\cite{BurmanClausHansboEtAl2014,BordasBurmanLarsonEtAl2018} or
\emph{TraceFEM}s~\cite{OlshanskiiReuskenGrande2009} have been
developed for the last 13 years which allow for more flexible handling
of surface geometries by embedding them into a structured and
easy-to-generate background mesh which does not fit the surface
geometry. 
For the development of more classical \emph{fitted} Surface Finite Element Methods (SFEM)
initiated in the seminal work~\cite{Dziuk1988},
we refer to the excellent and
comprehensive reviews~\cite{DziukElliott2013,AndreaBonito2020}.

Using continuous piecewise linear
finite element functions from the ambient space,
the first \textit{unfitted} finite element method for elliptic problems on
surfaces was proposed in~\cite{OlshanskiiReuskenGrande2009},
and later extended to higher-order elements in~\cite{GrandeReusken2016,Reusken2014}. 
As the embedded surface geometry can cut through the background mesh in an arbitrary fashion,
one main challenge in devising unfitted finite element methods is to
ensure their \emph{geometrical robustness} in the sense that they satisfy
similar stability, a priori error, and conditioning number estimates as
their fitted mesh counterparts, but with constants that are
independent of the particular cut configuration.  
A rather universal approach to achieving geometrical robustness is to
augment the weak formulation under consideration with suitably
designed stabilizations also known as \emph{ghost
penalties}~\cite{BurmanClausHansboEtAl2014}.
For Laplace-Beltrami-type problems on surfaces,
ghost penalties based on face stabilization and artificial diffusion
were introduced in \cite{BurmanHansboLarson2015} and \cite{BurmanHansboLarsonEtAl2016c},
respectively.
The contributions from
\cite{GrandeLehrenfeldReusken2018,BuermanHansboLarsonEtAl2018}
then proposed an abstract stabilization CutFEM framework to discretize
elliptic problems using continuous higher-order elements as well as on
embedded manifolds of co-dimension larger than one.
In particular, the volume normal gradient stabilization introduced
in~\cite{GrandeLehrenfeldReusken2018,BuermanHansboLarsonEtAl2018}
was then successfully used to weakly enforce the tangential condition 
in vector-valued problems including the surface
Darcy equation~\cite{HansboLarsonMassing2017}
and the surface Stokes equation
\cite{OlshanskiiQuainiReuskenEtAl2018,OlshanskiiReuskenZhiliakov2021},
all resting upon continuous finite elements.

So far, most fitted and unfitted finite element schemes for surface
PDEs have been designed for diffusion-dominated elliptic
or parabolic type
problems~\cite{DziukElliott2007a,ElliottStinnerStylesEtAl2010,OlshanskiiReusken2014,OlshanskiiReuskenXu2014a,LehrenfeldOlshanskiiXu2018,Zahedi2017,Kovacs2017},
in contrast to the plethora of both stabilized continuous and
discontinuous Galerkin schemes for advection-dominated problems posed
in the Euclidian flat
case, see for instance the comprehensive monograph~\cite{RoosStynesTobiska2008} or
the recent textbook~\cite{ErnGuermond2021c}.  
Interestingly, relevant work on advection-dominated surface problems
appeared first in the context of unfitted finite
elements, starting with~\cite{OlshanskiiReuskenXu2014}, where the
classical Streamline Upwind Petrov--Galerkin (SUPG) approach was
combined with TraceFEM. Later~\cite{HansboLarsonZahedi2015b}
considered a characteristic CutFEM for convection-diffusion problems
on time-dependent surfaces.  Moreover, CutFEM formulations for advection-dominated
problems on surfaces have been proposed using the continuous interior
penalty method~\cite{BurmanHansboLarsonEtAl2018b}, an artificial
diffusion/full-gradient approach~\cite{BurmanHansboLarsonEtAl2018a},
and a normal-gradient stabilized streamline-line diffusion
approach~\cite{BurmanHansboLarsonEtAl2020}.
Finally, an adaptive TraceFEM formulation with mesh adaption guided by
a~posteriori error estimators was developed
in~\cite{ChernyshenkoOlshanskii2015} to solve potentially
advection-dominated advection-diffusion-reaction problems. 
Regarding fitted mesh-based
approaches on explicitly triangulated surfaces,  
variants employing local projection
stabilization~\cite{Simon2017,SimonTobiska2019} and Petrov--Galerkin
type techniques~\cite{BachiniFarthingPutti2021,ZhaoXiaoZhaoEtAl2020}
can be found in the literature.

The development of discontinuous Galerkin (DG) methods
for hyperbolic and advection-dominated problems was initiated 
\cite{Reed1973}, with the first theoretical analyses being presented
in~\cite{LesaintRaviart1974,JohnsonNaevertPitkaeranta1984}.
Later, \cite{BrezziMariniSueli2004} reformulated and generalized the upwind
flux strategy in DG methods by introducing a tunable stabilization parameter.
The advantageous conservation
and stability properties, the high locality, and the
naturally inherited upwind flux term in the bilinear form make DG
methods popular to handle specifically advection-dominated
problems~\cite{Cockburn1999,HoustonSchwabSueli2002,Zarin2005} as
well as elliptic ones~\cite{ArnoldBrezziCockburnEtAl2002,BrezziManziniMariniPietraRusso2000}.
Detailed overviews are provided by the 
monographs~\cite{DiPietroErn2012,HesthavenWarburton2007}.
In contrast, the development of DG methods for advection-dominated problems on surfaces
has been almost completely neglected. Only
the unpublished preprint~\cite{DednerMadhavan2015} proposes
a DG formulation for advection-dominated problems on surfaces
using piecewise linear elements on fitted meshes,
but the presented formulation contains
a geometrically inconsistent velocity-related term 
leading to suboptimal error estimates.
To the best of our knowledge, mostly elliptic problems have been considered
in the context of DG methods,
see, e.g., \cite{DednerMadhavanStinner2013,AntoniettiDednerMadhavanEtAl2015}
and \cite{CockburnDemlow2016} 
for respectively primal and mixed formulations of the Poisson surface problem
on fitted meshes,
while~\cite{BurmanHansboLarsonEtAl2016a} proposed a
stabilized unfitted \emph{cut discontinuous Galerkin method} (CutDG)
based on first-order elements and symmetric interior penalties.  
The latter was then combined in~\cite{Massing2017,LarsonZahedi2021} with a CutDG method
for bulk problems to discretize elliptic bulk-surface problems.
The general stabilization approach is in contrast to alternative 
unfitted discontinuous Galerkin methods for bulk PDEs~\cite{BastianEngwer2009,BastianEngwerFahlkeEtAl2011,SollieBokhoveVegt2011,HeimannEngwerIppischEtAl2013,Saye2017,Saye2017a,MuellerKraemer-EisKummerEtAl2016,KrauseKummer2017},
where troublesome small cut elements are merged with neighbor
elements with a large intersection support by simply extending the
local shape functions from the large element to the small cut element.
While the cell-merging approach automatically upholds local
conservation properties of the original scheme, some drawbacks exist
including the almost complete absence of numerical analysis except
for~\cite{Massjung2012,JohanssonLarson2013}, 
and, most importantly for the present contribution,
the lack of natural extension to surface PDEs.
More specifically, unfitted finite element methods for
surface PDEs do not only suffer from the classical small cut element problem,
but more importantly, the linear dependency of local shape functions when
restricted to a lower-dimensional manifold poses the most significant challenge
which cannot be addressed by a purely cell-merging based approach.

\subsection{New contributions and outline of the paper}
\label{ssec:new-contributions}
In this work, we present a new cut discontinuous Galerkin
(CutDG) method for the discretization of stationary advection-reaction problems
on embedded surfaces. This contribution is part of our long-term efforts to
develop a fully-fledged, stabilized cut discontinuous Galerkin
(CutDG) framework for the discretization of complex multi-physics
interface problems initiated
in~\cite{BurmanHansboLarsonEtAl2016a,GuerkanMassing2019,GuerkanStickoMassing2020}.
Our main motivation is that the stabilization approach provides us with
a versatile theoretical and practical road to formulate, analyze and implement
unfitted discontinuous Galerkin methods.
The presented approach draws inspiration from our earlier
contributions~\cite{BurmanHansboLarsonEtAl2016a,GuerkanStickoMassing2020},
but compared to~\cite{BurmanHansboLarsonEtAl2016a}, we shift here our focus 
from pure diffusion problems to advection-reaction problems while
also considering higher-order elements.
In contrast to our work \cite{GuerkanStickoMassing2020} on
CutDG methods for advection-reaction bulk problems,
we need here to develop new stabilization for the surface-bound PDE.
Such a task is not a straightforward extension of our techniques 
developed in~\cite{GuerkanStickoMassing2020}
as additional stability issues arise in the surface case
which are not present in the bulk version. Moreover, we also
provide precise estimates of all geometrical errors
caused by the geometric approximation of the surface.

We start by briefly recalling the 
advection-reaction model problem on surfaces
and the corresponding weak formulations in Section~\ref{sec:model-problem},
followed by a presentation of the proposed CutDG method in Section~\ref{sec:CutDG-intro}.
Our approach departs from an embedding of the surface $\Gamma$ into a higher dimensional 
background mesh ${\mcT}_h$. 
To account for geometrical errors typically occurring
in surface PDE discretizations, we only assume 
that a piecewise polynomial approximation $\Gamma_h$ of order $k_g$
is available so that the errors in position and normal 
are $O(h^{k_g+1})$ and $O(h^{k_g})$, respectively. 
On the discrete surface $\Gamma_h$ we formulate 
a discontinuous Galerkin method which closely resembles
the classical upwind formulation presented in \cite{BrezziMariniSueli2004},
but uses the discrete function spaces stemming from the background mesh.
The resulting formulation is highly ill-posed due to a)
the potential small intersection between mesh elements and surface discretization
and, more importantly, b) the arising linear dependency of local 3D
shape functions when restricted to the 2D surface.
We like to point out that the popular cell-merging approach for unfitted DG methods for bulk problems
does not provide a remedy for b).
Instead, we add a consistent stabilization $s_h$ to the surface bounded bilinear form $a_h$, 
which renders the method \emph{geometrically robust} and
enables us to prove inf-sup stability and optimal convergence for our CutDG
method with respect to a combined stabilized upwind flux/streamline diffusion-type norm
which is independent of the particular cut configuration.
Our stabilization framework works automatically for higher-order
approximation spaces with polynomial orders $k$ and is not limited to
low-order schemes.  After collecting several auxiliary results regarding norms, interpolation
operators, and geometry-related error estimates in
Section~\ref{sec:norms-etc}, 
we provide a detailed motivation and derivation of a suitable stabilization operator for our
CutDG method in Section~\ref{sec:stab-analysis}. 
Extending the approaches
from~\cite{BurmanHansboLarsonEtAl2016a,BuermanHansboLarsonEtAl2018,GuerkanStickoMassing2020},
we prove that a properly scaled normal gradient volume stabilization
together with low-order jump terms gives us control of certain
rescaled upwind and streamline diffusion norms which are evaluated on
the full background mesh. As a result, we can show that our
formulation satisfies a geometrically robust inf-sup condition with
respect to the stabilized streamline diffusion norm.  The subsequent a
priori error analysis in Section~\ref{sec:aprioriest} builds upon the
classical Strang-type lemma approach which decomposed the total error
into a best approximation error, a consistency error caused by the
stabilization, and a geometrical error arising from the surface
approximation. For each contribution, detailed estimates are given.
Afterward, we demonstrate in Section~\ref{sec::condition-number-est}
that thanks to our stabilization, the condition number of the
resulting system matrix scales exactly as the corresponding
fitted DG upwind formulation in the flat Euclidian case.  Finally, we
corroborate our theoretical findings with a series of numerical
experiments in Section~\ref{sec:numerical-results} where we study both
the convergence properties and geometrical robustness of the proposed
CutDG method.

\section{Model problem}
\label{sec:model-problem}
\subsection{Basic notation}
\label{ssec:preliminaries}
In this work, we let $\Gamma$ be a compact, oriented, and smooth
hypersurface without boundary, embedded in $\RR^d$ and equipped with a
smooth normal field $n : \Gamma \rightarrow \RR^d$. Let $\rho$ denote
the signed distance function that measures the distance in the normal
direction from $\Gamma$, defined on a $\delta$-neighborhood
$U_\delta(\Gamma) = \{ x \in \RR^d \mid \text{dist}(x, \Gamma) <
\delta\}$, see Figure \ref{fig:distance_active} (left).
Then it is well-known that 
the closest point projection $p : U_\delta(\Gamma) \rightarrow \Gamma$ implicitly defined by
\begin{equation}
    p(x) = x - \rho(x) n(p(x))
\end{equation}
is well-defined in $U_\delta(\Gamma)$
provided that $\delta <  \kappa^{-1}$, 
where $\kappa = \max_{i=1, \ldots, d-1}\|\kappa_i\|_{L^\infty(\Gamma)}$ is
the maximum of the principal curvatures of $\Gamma$. 
Using the closest point
projection we can define the extension $u^e$ of a function, $u$ defined on $\Gamma$
to the $\delta$-neighborhood $U_\delta(\Gamma)$ by setting
\begin{equation}
    u^e(x) = u(p(x)).
    \label{eq:extension}
\end{equation}
Conversely, a function $w$ defined on a subset $\widetilde{\Gamma} \subset U_\delta(\Gamma)$ can be lifted back to 
$p(\widetilde{\Gamma}) \subset \Gamma$ via
\begin{equation}
    w^l(x) = w(p^{-1}(x)),
\end{equation}
whenever the closest point mapping $p: \widetilde{\Gamma} \to p(\widetilde{\Gamma})$ is bijective.
Then
\begin{equation}
    (w^l(x))^e = w^l(p(x)) = w \circ p^{-1} \circ p (x) = w.
\end{equation}
Furthermore, for a function $u : \Gamma \rightarrow \RR$, we define the \emph{tangential gradient} $\nabla_\Gamma$ on $\Gamma$ by
\begin{equation}
    \nabla_\Gamma u = P_\Gamma \nabla u^e.
\end{equation}
The operator $P_\Gamma = P_\Gamma(x)$ is the orthogonal projection of $\RR^d$ onto the tangent space of $\Gamma$ at $x \in \Gamma$ given by
\begin{align}
    P_\Gamma = I - n_\Gamma \otimes n_\Gamma,
    \label{eq:Ps-def}
\end{align}
where $I$ is the identity matrix. For a vector field $v$ on $\Gamma$, the \textit{tangential divergence} is defined as
\begin{align}
    \nablas \cdot v = \nabla \cdot v - n_\Gamma \cdot \nabla v n_\Gamma.
    \label{eq:nablas-def}
\end{align}

For any sufficient regular subset $U \subseteq \RR^d$
and $ 0 \leqslant m < \infty$, $1 \leqslant q \leqslant \infty$, we denote by
$W^{m,q}(U)$ the standard Sobolev spaces consisting of those
$\RR$-valued functions defined on $U$ which possess $L^q$-integrable
weak derivatives up to order $m$. 
Their associated norms are denoted by $\|\cdot \|_{m,q,U}$.  As usual,
we write $H^m(U) = W^{m,2}(U)$ and $(\cdot,\cdot)_{m,U}$ and
$\|\cdot\|_{m,U}$ for the associated inner product and norm. 
If unmistakable, we occasionally write
$(\cdot,\cdot)_{U}$ and $\|\cdot \|_{U}$ for the inner products and
norms associated with $L^2(U)$, with $U$ being a measurable subset of
$\RR^d$.
Any norm $\|\cdot\|_{\mcP_h}$ used in this work which
involves a collection of geometric entities $\mcP_h$ should be
understood as the broken norm defined by
$\|\cdot\|_{\mcP_h}^2 = \sum_{P\in\mcP_h} \|\cdot\|_P^2$ whenever
$\|\cdot\|_P$ is well-defined, with a similar convention for scalar
products $(\cdot,\cdot)_{\mcP_h}$.  Any set operations
involving $\mcP_h$ are also understood as element-wise operations,
e.g., $ \mcP_h \cap U = \{ P \cap U \st P \in \mcP_h \} $ and $
\partial \mcP_h = \{ \partial P \st P \in \mcP_h \} $ allowing for a
compact short-hand notation such as $ (v,w)_{\mcP_h \cap U} =
\sum_{P\in\mcP_h} (v,w)_{P \cap U} $ and $ \|\cdot\|_{\partial
\mcP_h\cap U} = \sqrt{\sum_{P\in\mcP_h} \|\cdot\|_{\partial P\cap
U}^2}$.  Moreover, for geometric entities $P$ of Hausdorff dimension
$l$, we denote their $l$-dimensional Hausdorff measure by $|P|_l$.
Finally, throughout this work, we use the notation
$a \lesssim b$ for $a\leqslant C b$ for some generic constant $C$
(even for $C=1$) which varies with the context but is always
independent of the mesh size $h$ and the position of $\Gamma$ relative
to the background $\mcT_h$, but may depend on the dimension~$d$,
the polynomial degree of the finite element functions, the shape
regularity of the mesh, and the curvature of $\Gamma$.
The binary relations $\gtrsim$ and $\sim$ are defined analogously.

\subsection{The continuous problem}
\label{ssec:continousproblem}
We consider the following advection-reaction problem on a surface: find $u : \Gamma \rightarrow \RR$ such that
\begin{equation}
    b \cdot \nabla_\Gamma u + c u  = f \text{ on } \Gamma,
    \label{eq:continuous_problem}
\end{equation}
where $b \in [W^{1, \infty}(\Gamma)]^d$ is a given vector field, and $c \in L^\infty(\Gamma)$ and $f \in L^2(\Gamma)$ are given scalar function.
The corresponding weak form is: find $u \in V = \{ v \in L^2(\Gamma) \st b\cdot\nablas v \in L^2(\Gamma)\}$ such that
\begin{equation}
    a(u, v) = l(v) \hspace{5mm} \forall v \in V,
\label{eq:bilinear_exact}
\end{equation}
with the bilinear form $a(\cdot, \cdot)$ and the linear form $l(\cdot)$ being given by
\begin{align}
    a(u, v) &= (b \cdot \nabla_\Gamma u + cu, v)_\Gamma\nonumber, \\
    l(v) &= (f, v)_\Gamma.
\end{align}
Furthermore, to ensure that problem \eqref{eq:continuous_problem} is well-posed, we assume as usual that
\begin{align}
    \essinf_{x \in \Gamma} \big(c(x) - \frac{1}{2}\nablas \cdot b(x)\big) \geqslant c_0 > 0,
    \label{eq:cb_cond}
\end{align}
for some positive constant $c_0$.

\section{Stabilized cut discontinuous Galerkin methods}
\label{sec:CutDG-intro}
\subsection{Computational domains and discrete function spaces}
\label{ssec:dompdomaindiscrete}
Let $\{\widetilde{\mcT}_h\}_{h}$ be a family of quasi-uniform meshes
consisting of shape-regular 
elements $T$ with element diameter $h_T < \delta$ covering the $\delta$
neighborhood $U_\delta(\Gamma)$ of the surface $\Gamma$.
For simplicity, we assume that our mesh consists of either simplicial or 
cubic elements of dimension $d$.
In computations, one
typically does not have an exact representation of the surface $\Gamma$, but
rather an approximation $\Gamma_h$. 
In this work, the discrete surface $\Gamma_h$ is supposed to
satisfy the following assumptions:
\begin{itemize}
    \item $\Gamma_h \subset U_\delta(\Gamma)$ and the closest point mapping $p : \Gamma_h \rightarrow \Gamma$ is a bijection for $0 < h \leqslant h_0$.
    \item The following estimates hold
    \begin{align}
            \|\rho\|_{L^\infty(\Gamma_h)} \lesssim h^{k_g+1}, \qquad \|n^e - n_h\|_{L^\infty(\Gamma_h)} \lesssim h^{k_g}
            \label{eq:discrete_surf_ass}
    \end{align}
    for a positive integer $k_g \geqslant 1$.
\end{itemize}
Typically, the distance function $\rho$ is approximated by its interpolation
$\rho_h = I_h^{k_g} \rho$ into the space of continuous, piecewise polynomials
of order $k_g$ on $\mcT_h$. Then the discrete surface $\Gamma_h$ given as the
zero level set of $\rho_h$ satisfies the assumptions in
equation~\eqref{eq:discrete_surf_ass}.

For a given background mesh $\widetilde{\mcT}_h$ and discrete surface
$\Gamma_h$, the \textit{active mesh} $\mcT_h$ is defined as the collection of
those mesh elements that have a nonempty intersection with the discrete
surface,
\begin{equation}
    \mcT_h = \{T \in \widetilde{\mcT}_h \mid T \cap \Gamma_h \neq \emptyset\},
    \label{eq:mcT-def}
\end{equation}
while the union of all the active elements is denoted by 
\begin{align}
    \mcN_h = \bigcup_{T \in \mcT_h} T.
    \label{eq:Nh-def}
\end{align}
Further, the set on \emph{interior faces} of the active mesh is given by
\begin{equation}
    \mcF_h = \{F = T^+ \cap T^- \mid T^+, T^- \in \mcT_h, \, T^+ \neq T^- \}.
    \label{eq:mcF-def}
\end{equation}
The face normals $n_F^+$ and $n_F^-$ are the unit normal vectors pointing out
of $T^+$ and $T^-$, respectively. The discrete surface $\Gamma_h$ is assumed
to be piecewise smooth on each element, so we have the set of \emph{surface
parts} $K$ and the set of \emph{interior edges} $E$:
\begin{align}
    \mcK_h &= \{K = \Gamma_h \cap \overset{\circ}{T} \mid T \in \mcT_h\} \cup \{K = \Gamma_h \cap F \mid F \in \mcF_h\}\label{def:K_h},\\
    \mcE_h &= \{E = K^+ \cap K^- \mid K^+, K^- \in \mcK_h\}.
\end{align}
Note that the second set in~\eqref{def:K_h} is included to account for potential corner cases 
where parts of of the embedded surface $\Gamma$ 
intersect non-transversally with a mesh facet $F$ so that
$F\cap\Gamma$ has a non-vanishing $d-1$ dimensional Hausdorff measure.
For every interior edge $E$, the two normals $n_E^{\pm}$ are defined as the
unit vector which is tangential to the surface parts $K^{\pm}$,
perpendicular to $E$, and points outwards with respect to $K^{\pm}$.
Note that the two co-normals $n_E^{\pm}$ are not necessarily co-planar,
see~Figure~\ref{fig:distance_active}.
Each surface element
$K$ also has two pointwise defined normals, giving rise to a piecewise
smooth normal field $n_{\Gamma_h}$ for the discrete surface $\Gamma_h$.
As in the continuous case, the discrete tangential projection $\Psh$ and
tangential gradient $\nablash$ are then defined by
\begin{align}
    \Psh = I - \nsh \otimes \nsh, \qquad
    \nablash u = \Psh \nabla u,
\end{align}
whenever $u$ is (weakly) differentiable and defined in a neighborhood of
$\Gamma_h$.
The various geometric quantities introduced above are illustrated in
Figure~\ref{fig:distance_active}.
Finally, we let
\begin{align}
    V_h = \PPdck = \bigoplus_{T\in\mcT_h} \PP^k(T)
\end{align}
be the discrete space of discontinuous, piecewise polynomials of degree $k$ on $\mcT_h$.
\begin{figure}[htb]
    \centering
  \begin{minipage}[t]{0.45\textwidth}
    \vspace{0pt}
    \includegraphics[width=\textwidth]{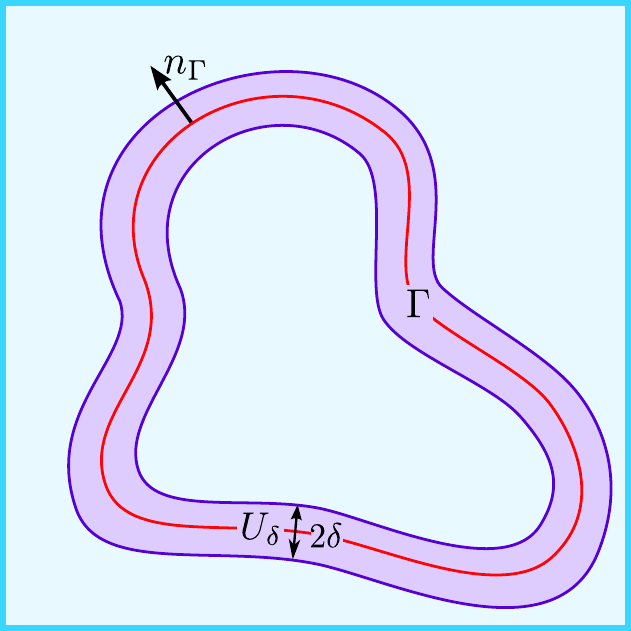}
  \end{minipage}
  \begin{minipage}[t]{0.45\textwidth}
    \vspace{0pt}
    \includegraphics[width=\textwidth]{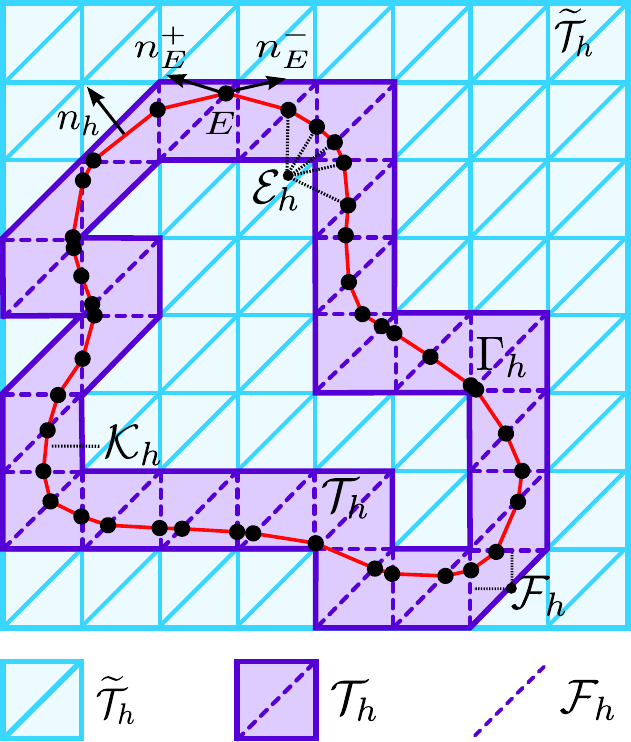}
  \end{minipage}
  \caption{
  Left: The $\delta$ neighborhood of $\Gamma$. Right: Active mesh and discrete surface.
  } \label{fig:distance_active}
\end{figure}

\subsection{Discrete weak formulation}
\label{ssec:disc_weak_form}
To formulate the cut discontinuous Galerkin method for the
advection-reaction problem, we need to define averages and jumps of functions
across edges and faces. For a piecewise discontinuous, possibly
vector-valued function~$\sigma$ defined on the surface part $\mcK_h$, we
define its average and jump over an edge $E \in \mcE_h$ by
\begin{align}
    \{\sigma\}|_E &= \frac{1}{2}(\sigma_E^+ + \sigma_E^-),
    \\
   \jump{\sigma} |_E &= \sigma_E^+ - \sigma_E^-,
\end{align}
respectively.
To account for the fact that the two co-normals $n_E^\pm$ are not
necessarily co-planar, the normal-weighted 
average and jump are given by
respectively
\begin{align}
    \{\sigma ; n_E\}|_E &= \frac{1}{2}(n_E^+ \cdot \sigma^+ - n_E^- \cdot \sigma^-),
    \\
    \jump{\sigma ; n_E}|_E &= (n_E^+ \cdot \sigma^+ + n_E^- \cdot \sigma^-),
\end{align}
which reduces to the known standard definitions in the Euclidean case.
Similarly, for functions~$\mu$ defined on the active background mesh~$\mcT_h$, the average and jump over a 
face~$F \in \mcF_h$ are given by
\begin{align}
    \{\mu\}|_F &= \frac{1}{2}(\mu_F^+ + \mu_F^-),
    \\
   \jump{\mu} |_F &= \mu_F^+ - \mu_F^-.
\end{align}
We can now formulate the cut discontinuous Galerkin based discretization
of the advection-reaction problem~\eqref{eq:continuous_problem}. 
Let $b_h : \Gamma_h \rightarrow
\RR^d$, $c_h: \Gamma_h \rightarrow \RR$ and
$f_h: \Gamma_h \rightarrow \RR$
be suitably defined representations
of $b, c$, and $f$ respectively, defined on the discrete surface $\Gamma_h$. 
Further assumptions for $b_h, c_h$, and $f_h$ are given below and
specific constructions satisfying these assumptions are presented in Section~\ref{ssec:coeff-assump}.
For $v, w \in V_h$, the discrete counterpart of $a(\cdot, \cdot)$ is defined by
\begin{align}
    \label{eq:ahI}
    a_h(v, w) &= (c_h v + b_h \cdot \nablash v , w)_{\mcK_h} 
    -(\{b_h ; n_E\} [v], \{w\})_{\mcE_h}
    + \frac{1}{2}(|\{b_h ; n_E\}|[v], [w])_{\mcE_h}.
\end{align}
However, for a ``naive'' cut discontinuous Galerkin formulation which is solely based on the discrete bilinear~\eqref{eq:ahI} the following issues need to be addressed.
First, as for classical cut finite element formulations of
bulk boundary problems~\cite{BurmanClausHansboEtAl2014}, 
small cut elements with neglegible surface part measure
$|K|_{d-1} \ll h^{d-1}$ and neglegible edge measure $|E|_{d-2} \ll h^{d-2}$
can lead to severely ill-conditioned system matrices.
Second and more importantly, we note that the purely surface-based norms $\| \cdot \|_{\Gamma}$ 
and $\| b\cdot \nablas (\cdot)\|_{\Gamma}$ which are naturally
associated with \eqref{eq:ahI} do not necessarily define proper norms on $V_h$
if the polynomial order $k \geqslant 2$.
For instance, the unit sphere
can be defined by the level set of the second-order polynomial $\phi(x,
    y, z) = x^2 + y^2 + z^2 -1 \in \PP_{\mrm{dc}}^2(\mcT_h)$.  Neglecting
geometric errors and assuming $\Gamma = \Gamma_h$ for a moment, we see
that in that case both $\| \phi \|_{\Gamma}$ and $\| b\cdot \nablas
\phi \|_{\Gamma}$ are zero although $\phi \in V_h$ is clearly
nonvanishing. This issue arises from fact that the aforementioned norms only
account for variations of discrete functions in surface tangential direction but not
for variations in surface normal direction.
As a consequence, it is not possible to establish stability estimates
for the bilinear form~\eqref{eq:ahIa} which are not sensitive to the particular cut
configuration.

A major contribution of the present work is to show how both issues
can be addressed simultaneously by adding a suitably designed
stabilization form $s_h$. Thanks to $s_h$, we gain sufficient control
over functions in $V_h$ in an enhanced streamline-diffusion type norm $\tn \cdot
\tnsdh$ and are able to derive \emph{geometrically robust} stability
properties and optimal error and condition number estimates all of
which are independent of the cut configuration.
The stabilization form $s_h$ is assumed to be symmetric and positive
semi-definite and the final stabilized cut discontinuous Galerkin
formulation is to seek $u_h \in V_h$ such that for all $v_h \in V_h$
\begin{align}
    A_h(u_h, v) \coloneqq
    a_h(u_h, v) + s_h(u_h, v)
    = l(v_h) 
    \coloneqq
    (f_h, v_h)_{\mcK_h}.
    \label{eq:Ah-def}
\end{align}
The design of a suitable stabilization $s_h$ will be the main objective of Section~\ref{sec:stab-analysis}.

\section{Norms, approximation properties, and inequalities}
\label{sec:norms-etc}
Before we turn to the derivation of stability and a priori error estimates for
the discrete problem~\eqref{eq:Ah-def} in the next two sections, we first need to
introduce suitable norms and collect several important auxiliary results.

\subsection{Norms}
\label{subsec:relevantnorms}
First, inspired by the theoretical analysis
in~\cite{DiPietroErn2012,GuerkanStickoMassing2020}, we define a characteristic or
reference time $\tau_c$ via
\begin{align}
    \tau_c^{-1} = \|c\|_{0, \infty, \Gamma} + |b|_{1, \infty, \Gamma} + b_{\infty} \kappa,
    \label{eq:def-t_c}
\end{align}
where $b_{\infty} = \|b\|_{0, \infty,
\Gamma}$ denotes the reference velocity and $\kappa$ is the maximum principal curvature of $\Gamma$ defined in
Section~\ref{sec:model-problem}.
Throughout this work, we assume that the mesh is sufficiently fine in the sense
that
\begin{align}
    h \leqslant b_{\infty} \tau_c 
    \quad
    \Leftrightarrow 
    \quad
    \tau_c^{-1}\phi_b \leqslant 1,
    \label{eq:mesh-resol-assump}
\end{align}
introducing the scaling factor
\begin{align}
    \phi_b = h / b_{\infty},
    \label{eq:def-phi_b}
\end{align}
which will be omnipresent in the forthcoming stability and error analysis.
Assumptions~\eqref{eq:mesh-resol-assump} ensure
that the individual inequalities
\begin{align}
\|c\|_{0, \infty, \Gamma} h \leqslant b_{\infty},
\quad 
|b|_{1, \infty, \Gamma} \leqslant \dfrac{b_{\infty}}{h},
\quad \text{and} \quad
h \leqslant \dfrac{1}{\kappa}
    \label{eq:c_phi_2}
\end{align}
are all satisfied. The first inequality simply means that on an
element level, problem~\eqref{eq:continuous_problem} can be considered
advection-dominant, while the second one ensures that the velocity
field~$b$ is sufficiently resolved. The third inequality in
\eqref{eq:c_phi_2} is just a reformulation of our previous assumption
that the active mesh lies within an $\delta$-neighborhood
$U_{\delta}(\Gamma)$ for which the closest point projection is
uniquely defined, cf.~Section~\ref{ssec:dompdomaindiscrete}.

Next, we define the upwind and the scaled streamline diffusion norm by
\begin{align}
    \tn v \tnup^2 &= \tau_c^{-1} \|v\|_{\mcK_h}^2 + \frac{1}{2} \| |\{b_h; n_E\}|^{\onehalf} [v]\|_{\mcE_h}^2,
    \label{eq:upw-norm-def}
    \\
    \tn v \tnsd^2 &= \tn v \tnup^2 + \|\phi_b^{\onehalf} b_h \cdot \nablash v\|_{\mcK_h}^2,
    \label{eq:sd-norm-def}
\end{align}

On a few occasions, we will also employ a slightly stronger norm than $\tn \cdot \tnsd$
defined by
\begin{align}
    \tn v \tnsdast^2 
    = \phi_b^{-1}\| v\|_{\mcK_h}^2.
    + \|\phi_b^{\onehalf} b_h \cdot \nablash v\|_{\mcK_h}^2
    + b_{\infty} \|v\|_{\partial \mcK_h}^2 
    \label{eq:sdhast-def}
\end{align}
as it immediately leads to the following useful boundedness results.
\begin{lemma}
    For $v \in H^1(\Gamma) \oplus V_h$ and $w \in V_h$ it holds that
\begin{equation}
    a_h(v,w) \lesssim \tn v \tnsdast \tn w \tnsd.
    \label{eq:A_h_boundedness}
\end{equation}
\end{lemma}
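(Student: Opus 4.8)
The plan is to estimate $a_h(v,w)$ term by term and match each factor to the two norms. The structural observation driving everything is that $\tn\cdot\tnsdast$ supplies the strong $\phi_b^{-1}$-weighted $L^2$-control and the full edge-trace control $b_{\infty}\|\cdot\|_{\partial\mcK_h}^2$ of its argument, whereas $\tn\cdot\tnsd$ only controls the weaker $\tau_c^{-1}$-weighted $L^2$-norm, the streamline-diffusion seminorm $\|\phi_b^{\onehalf}b_h\cdot\nablash(\cdot)\|_{\mcK_h}$, and the weighted jumps $\||\{b_h;n_E\}|^{\onehalf}[\cdot]\|_{\mcE_h}$. Throughout I would use the coefficient bounds $\|c_h\|_{0,\infty}\lesssim\tau_c^{-1}$, $\|\nablash\cdot b_h\|_{0,\infty}\lesssim\tau_c^{-1}$, and $|\{b_h;n_E\}|\lesssim b_{\infty}$, together with the mesh-resolution assumption~\eqref{eq:mesh-resol-assump}, which gives $(\phi_b/\tau_c)^{\onehalf}\leqslant 1$.

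For the reaction term, Cauchy--Schwarz yields $(c_h v,w)_{\mcK_h}\leqslant\|c_h\|_{0,\infty}\|v\|_{\mcK_h}\|w\|_{\mcK_h}$; writing the right-hand side as $\|c_h\|_{0,\infty}\phi_b^{\onehalf}\tau_c^{\onehalf}\,(\phi_b^{-\onehalf}\|v\|_{\mcK_h})(\tau_c^{-\onehalf}\|w\|_{\mcK_h})$ and using $\|c_h\|_{0,\infty}\lesssim\tau_c^{-1}$ leaves the harmless prefactor $(\phi_b/\tau_c)^{\onehalf}\leqslant 1$ times $\tn v \tnsdast \tn w \tnsd$.

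The advection volume term $(b_h\cdot\nablash v,w)_{\mcK_h}$ is the crux. A direct pairing would read $\|\phi_b^{\onehalf}b_h\cdot\nablash v\|_{\mcK_h}\,\phi_b^{-\onehalf}\|w\|_{\mcK_h}$, but since $\tn w \tnsd$ controls only $\tau_c^{-\onehalf}\|w\|_{\mcK_h}$ this leaves the factor $(\tau_c/\phi_b)^{\onehalf}$, which is \emph{not} uniform in $h$. I would therefore integrate by parts on each surface part; as $\Gamma_h$ is closed, every portion of $\partial K$ is an interior edge, and one obtains
\begin{equation*}
(b_h\cdot\nablash v,w)_{\mcK_h}
= -(v,b_h\cdot\nablash w)_{\mcK_h}-\big((\nablash\cdot b_h)\,v,\,w\big)_{\mcK_h}
+\sum_{E\in\mcE_h}\int_E\big((n_E^+\cdot b_h)\,v^+w^+ + (n_E^-\cdot b_h)\,v^-w^-\big).
\end{equation*}
The divergence term is reaction-type and is handled as above through $\|\nablash\cdot b_h\|_{0,\infty}\lesssim\tau_c^{-1}$, while the decisive term $-(v,b_h\cdot\nablash w)_{\mcK_h}$ now pairs $\phi_b^{-\onehalf}\|v\|_{\mcK_h}\leqslant\tn v \tnsdast$ against $\|\phi_b^{\onehalf}b_h\cdot\nablash w\|_{\mcK_h}\leqslant\tn w \tnsd$ with unit weight --- which is exactly why $\tn\cdot\tnsdast$ carries the factor $\phi_b^{-1}$.

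It remains to collect the edge contributions. Using $v^\pm=\{v\}\pm\tfrac{1}{2}[v]$ and $w^\pm=\{w\}\pm\tfrac{1}{2}[w]$, the boundary sum above expands (over $\mcE_h$) into $\jump{b_h;n_E}\{v\}\{w\}+\{b_h;n_E\}\{v\}[w]+\{b_h;n_E\}[v]\{w\}+\tfrac{1}{4}\jump{b_h;n_E}[v][w]$, with the co-normal defect $\jump{b_h;n_E}=n_E^+\cdot b_h+n_E^-\cdot b_h$. Adding the two explicit edge forms of $a_h$, the term $\{b_h;n_E\}[v]\{w\}$ cancels exactly against $-(\{b_h;n_E\}[v],\{w\})_{\mcE_h}$; the surviving $\{b_h;n_E\}\{v\}[w]$ and $\tfrac{1}{2}|\{b_h;n_E\}|[v][w]$ are bounded by pairing the full trace $b_{\infty}^{\onehalf}\|v\|_{\partial\mcK_h}\leqslant\tn v \tnsdast$ (via $\|\{v\}\|_{\mcE_h},\|[v]\|_{\mcE_h}\lesssim\|v\|_{\partial\mcK_h}$ and $|\{b_h;n_E\}|\lesssim b_{\infty}$) against the weighted jump $\||\{b_h;n_E\}|^{\onehalf}[w]\|_{\mcE_h}\lesssim\tn w \tnsd$, which is precisely the role of $b_{\infty}\|v\|_{\partial\mcK_h}^2$ in $\tn\cdot\tnsdast$. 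Summing the three groups gives the claim. The main obstacle is thus the advection volume term, whose naive estimate degenerates like $(\tau_c/\phi_b)^{\onehalf}$ and forces the integration by parts; the remaining delicate point is the residual defect term $\jump{b_h;n_E}\{v\}\{w\}$ (and the $\jump{b_h;n_E}[v][w]$ piece), which involves quantities of $w$ not seen by $\tn\cdot\tnsd$ and must be absorbed using a discrete trace inequality together with the smallness of $\jump{b_h;n_E}$, which vanishes in the coplanar/flat configuration.
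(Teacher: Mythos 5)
Your route is the right one, and it is essentially the machinery the paper itself sets up (the paper states this lemma without proof, as a consequence of the definition of $\tn\cdot\tnsdast$): you correctly observe that estimating \eqref{eq:ahI} directly is impossible, since pairing $(b_h\cdot\nablash v,w)_{\mcK_h}$ costs $\phi_b^{-\onehalf}\|w\|_{\mcK_h}$ while $\tn w\tnsd$ only controls $\tau_c^{-\onehalf}\|w\|_{\mcK_h}$, and your integration by parts, the expansion of the edge sum, and the cancellation of $\{b_h;n_E\}[v]\{w\}$ reproduce exactly the paper's identity \eqref{eq:ahII} and Lemma~\ref{lem:threejump}. The reaction-type terms, the term $-(v,b_h\cdot\nablash w)_{\mcK_h}$, and the edge terms $(\{b_h;n_E\}\{v\},[w])_{\mcE_h}$ and $\tfrac12(|\{b_h;n_E\}|[v],[w])_{\mcE_h}$ are all estimated correctly, and your accounting explains precisely why $\tn\cdot\tnsdast$ carries the weight $\phi_b^{-1}$ and the full trace term $b_\infty\|\cdot\|^2_{\partial\mcK_h}$. (Your use of $\|\nablash\cdot b_h\|_{0,\infty,\mcK_h}\lesssim\tau_c^{-1}$ is not among the paper's stated assumptions, but it follows from \eqref{eq:bh_assumption_prel} and an inverse estimate, so it is harmless.)

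The gap is the co-normal defect term $(\{vw\},\jump{b_h;n_E})_{\mcE_h}$, which you flag as delicate but do not close: the proposed fix --- ``a discrete trace inequality together with the smallness of $\jump{b_h;n_E}$'' --- does not work with $\tn w\tnsd$ on the right. A trace inequality $\|w\|_{\partial K}\lesssim h^{-\onehalf}\|w\|_K+\dots$ on a cut surface part $K=\Gamma_h\cap T$ is exactly what fails in the unfitted setting: $K$ may be a thin sliver whose area is arbitrarily small compared with the length of its edges, and the only edge control in $\tn w\tnsd$ carries the weight $|\{b_h;n_E\}|^{\onehalf}$, which can vanish at edges where $\jump{b_h;n_E}\neq 0$ (for instance $n_E^+\cdot b_h^+=n_E^-\cdot b_h^-\neq 0$ at a fold of $\Gamma_h$). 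Hence, for fixed $h$, the ratio $a_h(v,w)/(\tn v\tnsdast\tn w\tnsd)$ is not bounded independently of the cut configuration, no matter how small the factor $C_b h^{k_g+1}$ is. The geometry-robust way out is the one the paper uses for the analogous term $(\{v^2\},\jump{b_h;n_E})_{\mcE_h}$ in the coercivity proof of Lemma~\ref{lem:upwindstability}: pass to the background elements via the inverse estimates \eqref{eq:inverseE} and \eqref{eq:inverseandtrace}, giving $\|w\|_{\partial\mcK_h}\lesssim h^{-1}\|w\|_{\mcT_h}$, and then invoke the $L^2$-extension property \eqref{eq:l2norm-extension-property} of the stabilization, $\|w\|^2_{\mcT_h}\lesssim \tau_c h\,\tn w\tnuph^2$. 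This costs the ghost-penalty semi-norm, i.e., what one actually proves this way is $a_h(v,w)\lesssim\tn v\tnsdast\,\tn w\tnsdh$ --- which is also the only form in which the lemma is ever used (cf.\ the Strang Lemma~\ref{lem:strang_ar}, where the test function is measured in $\tn\cdot\tnsdh$). Your argument becomes complete once the defect term is handled this way and the norm on $w$ is upgraded to the stabilized one; as literally stated, with the unstabilized $\tn w\tnsd$, the estimate cannot be established with cut-independent constants.
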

The corresponding stabilized norms
\begin{align}
    \tn v \tn_{\bigstar ,h}^2 = \tn v \tn_{\bigstar}^2 + |v|_{s_h}^2
    \quad \text{for } \bigstar \in \{\mrm{up},\mrm{sd}, \mrm{sd}\ast \}
\end{align}
will play a crucial role in the theoretical analysis of the proposed cut
discontinuous Galerkin method. Here, as usual, $|\cdot|_{s_h}$
refers to the semi-norm induced by the symmetric stabilization 
bilinear form $s_h$.

\subsection{Useful inequalities}
In the forthcoming analysis, we will use 
several inverse inequalities which hold for discrete functions $v_h \in V_h$, namely
\begin{alignat}{5}
    \|D^j v_h\|_T &\lesssim h^{i-j}\|D^i v_h\|_T && \quad \foralls T \in \mcT_h, && \quad 0 \leqslant i \leqslant j,
    \label{eq:inverse}\\
    \|D^j v_h\|_{\partial T} &\lesssim h^{i-j-1/2}\|D^i v_h\|_T &&\quad\foralls T \in \mcT_h, && \quad 0 \leqslant i \leqslant j - 1/2,
    \label{eq:inverseandtrace}\\
    \|D^j v_h\|_{\Gamma \cap T} &\lesssim h^{i-j-1/2}\|D^i v_h\|_T &&\quad \foralls T \in \mcT_h, && \quad 0 \leqslant i \leqslant j - 1/2,
    \label{eq:inversegamma}
    \\
    \|D^j v_h\|_{E \cap F} &\lesssim h^{i-j-1/2}\|D^i v_h\|_F 
    &&  \quad \foralls (E, F) \in \mcE_h \times \mcF_h,
    && \quad 0 \leqslant i \leqslant j - 1/2,
    \label{eq:inverseE}
\end{alignat}
while for functions $v \in H^1(\mcT_h)$ the trace inequalities
\begin{alignat}{3}
    \|v\|_{\partial T} &\lesssim h^{-\onehalf}\|v\|_{T} + h^{\onehalf}\|\nabla v\|_T && \quad \foralls T \in \mcT_h,\\
    \|v\|_{\Gamma \cap T} &\lesssim h^{-\onehalf}\|v\|_{T} + h^{\onehalf}\|\nabla v\|_T && \quad \foralls T \in \mcT_h,
    \label{eq:tracegamma}\\
    \|u\|_{E \cap F} &\lesssim h^{-\onehalf}\|u\|_{F} + h^{\onehalf}\|\nabla u\|_F 
    &&  \quad \foralls (E, F) \in \mcE_h \times \mcF_h,
    \label{eq:traceF}
\end{alignat}
will be extremely useful.  
All the above inequalities are consequences of similar well-known inverse estimates which can be found in, e.g., ~\cite[Sec. 4]{HansboHansboLarson2003}.

\subsection{Quasi-interpolation operators}
\label{ssec:quasi-interpolation}
Next, we define two suitable quasi-interpolation operators which will be heavily used
throughout the stability and a priori error analysis.
First, let $\pi_h^* : L^2(\mcT_h) \rightarrow V_h$ be the standard $L^2$ projection which 
for $v \in H^s(\mcT_h)$ and $r \coloneqq \min\{s, k+1\}$ satisfies the error estimates
\begin{alignat}{5}   
    &\|v - \pi_h^* v\|_{k, T} \lesssim h^{r-k} |v|_{r, T}
    &&\quad \foralls T \in \mcT_h, 
    && \quad 0 \leqslant k \leqslant r,
    \label{eq:L2T}
    \\
    &\|v - \pi_h^* v\|_{k, F} \lesssim h^{r-k-1/2} |v|_{r, F}
    &&\quad \foralls F \in \mcF_h, \label{eq:L2F}
    &&  \quad 0 \leqslant k \leqslant r -1/2,
\end{alignat}
see~\cite[Sec. 1.4.4]{DiPietroErn2012}.
Now define $\pi_h : H^s(\Gamma) \rightarrow V_h$ by taking the $L^2$-projection of the extension of $v$, so that $\pi_h v = \pi_h^* v^e$ for $v \in H^s(\Gamma)$. 
To derive error estimates for this quasi-interpolation operator, we recall the co-area formula
\begin{align}
    \int_{U_\delta} f(x) \dx = \int_{-\delta}^{\delta} \Big(\int_{\Gamma(r)} f(y, r) d\,\Gamma_r(y)\Big) \dr,
\end{align}
which can be found for instance in~\cite[Thm. 3.11]{EvansGariepy2015}.
Thanks to the co-area formula and the assumption $\mcT_h \subset U_\delta(\Gamma)$, the extension operator satisfies the estimate
\begin{align}
    \|v^e\|_{k, U_{\delta}(\Gamma)} \lesssim \delta^{\onehalf} \|v\|_{k, \Gamma}, \hspace{5mm} 0 \leqslant k \leqslant s,
    \label{eq:extensionstability}
\end{align}
for $0 < \delta < \delta_0$, where $\delta \sim h$.

For the forthcoming design and analysis of the stabilization $s_h(\cdot, \cdot)$,
we need to review some basic facts about the Oswald interpolation operator 
$\mcO_h : \PPdck \rightarrow \PPck$, which maps discontinuous piecewise polynomials on $\mcT_h$
to continuous ones. 
For a function $v_h \in
\PPdck$, its continuous version $\mcO_h(v_h)$ is defined in each interpolation
node $x_i$ by taking the average
\begin{align}
    \mcO_h(v_h)(x_i) = \frac{1}{\text{card}(\mcT_h(x_i))} \sum_{T \in \mcT_h(x_i)} v_h|_T (x_i),
\end{align}
where $\mcT_h(x_i)$ is the set of all elements $T \in \mcT_h$ sharing the node
$x_i$. The deviation of $\mcO_h(v_h)$ from $v_h$ can then be measured by the jumps of $v_h$ across faces
as stated in the following lemma. A proof can be found in~\cite[Lem. 3.2]{BurmanErn2007}.
\begin{lemma}
For $v_h \in \PPdck$ we have
\begin{align}
    \|v_h - \mcO_h(v_h)\|_T^2 \lesssim \sum_{F \in \mcF_h(T)} h \|[v_h]\|_F^2,
    \label{eq:oswald_interp_est}
\end{align}
where $\mcF_h(T)$ denotes all faces $F$ in $\mcF_h$ that intersects $T$.
\label{lem:oswald}
\end{lemma}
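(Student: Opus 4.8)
The plan is to reduce the claim to nodal values on a reference element, where all norms on the finite-dimensional polynomial space are equivalent, and then to express those nodal values as telescoping sums of face jumps along chains of face-adjacent active elements. Throughout, every $h$-power is tracked explicitly so that the net scaling $h$ in the right-hand side emerges transparently.

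First I would fix $T \in \mcT_h$ and set $w_h \coloneqq v_h - \mcO_h(v_h)$, whose restriction to $T$ lies in $\PP^k(T)$. Mapping $T$ to a reference element by a shape-regular affine transformation (multilinear in the cubic case) and using finite-dimensionality of $\PP^k$, all norms on this space are equivalent; transforming back and accounting for the Jacobian $\sim h^d$ yields
\[
    \|w_h\|_T^2 \lesssim h^d \sum_{x_i} |w_h(x_i)|^2,
\]
where the sum runs over the interpolation nodes $x_i$ of $T$, whose number is fixed by $k$ and $d$.

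Next I would evaluate $w_h(x_i)$ through the definition of $\mcO_h$. Writing $N_i \coloneqq \mathrm{card}(\mcT_h(x_i))$, one has
\[
    w_h(x_i) = \frac{1}{N_i} \sum_{T' \in \mcT_h(x_i)} \big( v_h|_T(x_i) - v_h|_{T'}(x_i) \big).
\]
For each $T'$ in the node patch I would connect $T$ to $T'$ by a chain $T = T_0, T_1, \dots, T_m = T'$ of active elements all sharing $x_i$, with consecutive elements meeting in a face $F_j \in \mcF_h$ that contains $x_i$; the difference $v_h|_T(x_i) - v_h|_{T'}(x_i)$ then telescopes into $\sum_j \pm [v_h]|_{F_j}(x_i)$. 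Since every such face satisfies $F_j \cap \bar T \ni x_i$, it is counted in $\mcF_h(T)$, and shape regularity bounds both $N_i$ and the maximal chain length uniformly, giving $|w_h(x_i)|^2 \lesssim \sum_{F \in \mcF_h(T)} |[v_h]|_F(x_i)|^2$. Finally, since $[v_h]|_F \in \PP^k(F)$ lives on the $(d-1)$-dimensional face $F$ of area $\sim h^{d-1}$, the same reference-element norm equivalence gives the pointwise-to-$L^2$ bound $|[v_h]|_F(x_i)|^2 \lesssim h^{-(d-1)} \|[v_h]\|_F^2$. Combining the three displays produces
\[
    \|w_h\|_T^2 \lesssim h^d \, h^{-(d-1)} \sum_{F \in \mcF_h(T)} \|[v_h]\|_F^2 = h \sum_{F \in \mcF_h(T)} \|[v_h]\|_F^2,
\]
which is exactly the asserted estimate.

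The main obstacle is the combinatorial telescoping step. One must verify that any two active elements sharing a node $x_i$ can be joined through a chain of \emph{face-adjacent} active elements all containing $x_i$, with patch cardinality and chain length bounded solely in terms of shape regularity of the background mesh. Care is needed here precisely because $\mcT_h$ is a cut submesh: if a node patch were disconnected in the active-face graph the telescoping would fail, so the argument relies on this connectivity (inherited from the shape-regular background mesh) to guarantee that the constant carries no hidden dependence on the cut configuration.
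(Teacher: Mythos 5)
Your argument is correct in substance and is precisely the standard proof of this estimate: the paper itself does not prove Lemma~\ref{lem:oswald} but defers to Burman--Ern, and the proof there proceeds exactly as you do --- nodal norm equivalence on $\PP^k(T)$ with Jacobian scaling $h^d$, the nodal-average identity for $v_h - \mcO_h(v_h)$, telescoping the inter-element differences into face jumps along chains within the node patch, and the pointwise-to-$L^2$ inverse bound $|[v_h](x_i)|^2 \lesssim h^{-(d-1)}\|[v_h]\|_F^2$ on faces of measure $\sim h^{d-1}$. The power counting $h^d \cdot h^{-(d-1)} = h$ is exactly right.

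The one place where your write-up is not quite right is the resolution you offer for the obstacle you correctly identify at the end. You claim the face-connectivity of node patches is ``inherited from the shape-regular background mesh,'' but shape regularity of $\widetilde{\mcT}_h$ guarantees nothing about connectivity of patches in the \emph{active} submesh $\mcT_h$: since $\mcT_h$ is defined by \eqref{eq:mcT-def} as the elements meeting $\Gamma_h$, one can in principle have two active elements sharing only the node $x_i$ while every face-connecting element of the patch is inactive. In that configuration the estimate \eqref{eq:oswald_interp_est} genuinely fails: take $v_h$ equal to $0$ on one of the two elements and $1$ on the other, so that $\mcF_h(T)$ contributes no jump across the vertex while $\mcO_h(v_h)(x_i) = \tfrac12$ makes the left-hand side positive. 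So the connectivity is a property of the cut configuration, not of the background mesh, and the correct fix is geometric: under the surface approximation assumptions \eqref{eq:discrete_surf_ass} and the resolution assumption $\kappa h \lesssim 1$ from \eqref{eq:c_phi_2}, the discrete surface is, at the scale of a node patch, a small perturbation of a hyperplane, and a near-planar hypersurface intersecting two elements of a patch necessarily intersects a face-connecting chain between them, restoring face-connectedness of every active node patch with chain length bounded by the patch cardinality. The paper silently inherits the same hypothesis by citing the fitted-mesh result, so your proof is the right one --- but the connectivity step should be justified by the geometric resolution assumptions rather than by shape regularity alone.
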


\subsection{Domain perturbation related estimates}
Using the definition of the discrete surface gradient $\nablash : V_h
\rightarrow \RR^d$ and applying the chain rule, we have the well-known identity
\begin{align}
    \nablash u^e = B^T \nablas u,
    \label{eq:nablash-ue}
\end{align}
where $B = P_\Gamma (I - \rho \mcH ) P_{\Gamma_h} : T_x(K) \rightarrow T_{p(x)} (\Gamma)$, and $\mcH = \nabla \otimes \nabla \rho$. 
Note that for $h$ small enough, the linear mapping $\Ps \Psh: T_x(K) \rightarrow T_{p(x)} (\Gamma)$ 
and thus $B = \Ps(I-\rho \mcH) \Psh  = \Ps  \Psh + h^{k_g+1}$ are invertible 
as mappings from the discrete to the continuous tangential space,
thanks to geometry assumption~\eqref{eq:discrete_surf_ass}. 
Using~\eqref{eq:nablash-ue}, we can also write the lifting of the gradient from $\Gamma_h$ to $\Gamma$ by using
\begin{align}
    \nablash w = \nablash (w^l)^e = B^T \nablas w^l,
\end{align}
so
\begin{align}
    \nablas w^l = B^{-T}\nablash w.
\end{align}
The measure on $\Gamma$ can be expressed as
\begin{align}
    d\, \Gamma = |B| d\, \Gamma_h,
\end{align}
where $|B|$ is the absolute value of the determinant of $B$. For $B$ and $|B|$ we recall
that the assumptions made in~\eqref{eq:discrete_surf_ass}
imply the following estimates
\begin{align}
    \|B\|_{L^{\infty}(\Gamma)} \lesssim 1, \quad \|B^{-1}\|_{L^{\infty}(\Gamma_h)} \lesssim 1, \quad \|P_{\Gammah}P_\Gamma - B^{-1}\|_{L^{\infty}(\Gammah)} \lesssim h^{k_g +1},
    \label{eq:B_estimates}
\end{align}
and
\begin{align}
    \|1 - |B|\|_{L^\infty(\Gammah)} \lesssim h^{k_g+1}, \quad \||B|\|_{L^\infty(\Gammah)} \lesssim 1, \quad \||B^{-1}|\|_{L^\infty(\Gammah)} \lesssim 1,
    \label{eq:B_det_estimates}
\end{align}
and we refer to \cite{} for the details.
This leads to the norm equivalences
\begin{align}
    \|v^l\|_{L^2(\Gamma)} &\sim \|v\|_{L^2(\Gamma_h)},\label{eq:normeql2}\\
     \|\nablas v^l\|_{L^2(\Gamma)} &\sim \|\nablash v\|_{L^2(\Gamma_h)}\label{eq:normeqh1}
\end{align}
for $v \in H^1(\Gamma)^e \oplus V_h$. Proofs of the above identities, inequalities and norm equivalences can be found in, e.g., \cite{DziukElliott2013,GrandeLehrenfeldReusken2018,BurmanHansboLarsonEtAl2016a,BuermanHansboLarsonEtAl2018}.

\subsection{Assumption on the discrete coefficients}
\label{ssec:coeff-assump}
For the discrete coefficient functions $b_h, c_h$ and $f_h$,
we now formulate several minimal assumptions for the
forthcoming stability and error analysis to hold. 
First, as the expression $\bhnablash v$ only involves tangential components of~$b_h$,
we simply require that the velocity field $b_h$ is purely tangential.
Next, we assume that $b_h$ and $c_h$ admit a discrete version of \eqref{eq:cb_cond}, 
\begin{align}
    \essinf_{x \in \Gamma_h}\big(c_h(x) - \frac{1}{2}\nablash \cdot b_h(x)\big) \geqslant c_{0, h} > 0.
    \label{eq:bh-essinf-assump}
\end{align}
with some positive and $h$-independent constant $c_{h,0}$.
Further, the following approximation properties are supposed to hold,
\begin{align}
    \|P_{\Gamma_h} b^e - b_h\|_{L^\infty(\mcK_h)} \lesssim C_b h^{k_g+1},
    \label{eq:bh_assumption_prel}
    \\ 
    \|c^e - c_h\|_{L^\infty(\mcK_h)} \lesssim C_c h^{k_g+1}, 
    \label{eq:ch_assumption_prel}
    \\
     \|f^e - f_h\|_{L^\infty(\mcK_h)} \lesssim  C_f h^{k_g+1}.
    \label{eq:fh_assumption_prel}
\end{align}
In addition to the $k_g+1$ order estimate~\eqref{eq:bh_assumption_prel}, 
we also assume a first-order estimate of the form
\begin{align}
\|P_{\Gamma_h} b^e - b_h\|_{L^\infty(\mcK_h)}
&\lesssim h (b_{\infty} \kappa + |b|_{1,\infty,\Gamma}),
\label{eq:b-bh_assump}
\end{align}
which we will see is sufficient to ensure that stabilized CutDG formulation~\eqref{eq:Ah-def}
satisfies a discrete inf-sup condition.
Finally, we also assume the existence of a piecewise constant vector field~$\btilde_h$
satisfying
\begin{align}
\|P_{\Gamma_h} b^e - \btilde_h\|_{L^\infty(\mcK_h)}
&\lesssim h (b_{\infty} \kappa + |b|_{1,\infty,\Gamma})
\label{eq:b-bhtilde_assump}
\\
\|b^e - \btilde_h\|_{0, \infty, T} 
&\lesssim h (
    b_{\infty} \kappa + |b|_{1, \infty, \Gamma}),
\quad
\|\btilde_h\|_{0, \infty, T} \lesssim \|b\|_{0, \infty, \Gamma}.
\label{eq:bhassumptions}
\end{align}
Since the extended vector field $b^e$ is in $W^{1, \infty}(U_{\delta}(\Gamma))$, 
such a patch-wise defined, locally constant, vector field $\widetilde{b}_h$ satisfying the
assumptions above can always be constructed, by for example taking the value
of $b_h$ at a point in the patch.

Thanks to the domain-perturbation-related estimates reviewed in the
previous section, the approximation properties can be reformulated in
a manner that will be more convenient in the analysis of the
geometrical errors presented in Section~\ref{ssec:geom-err-est}.
\begin{lemma}
    \label{lem:coeff-est}
    Assume that $b_h, c_h$, and $f_h$ satisfy~\eqref{eq:bh_assumption_prel}--\eqref{eq:fh_assumption_prel},
    then it holds that
    \begin{align}
        \||B| B^{-1} b^e - b_h\|_{L^\infty(\mcK_h)} & \lesssim C_b h^{k_g+1},
        \label{eq:bh_assumption}                                          \\
        \||B|c^e - c_h\|_{L^\infty(\mcK_h)}         & \lesssim  C_c h^{k_g+1},
        \label{eq:ch_assumption}                                          \\
        \||B|f^e - f_h\|_{L^\infty(\mcK_h)}         & \lesssim C_f h^{k_g+1}.
        \label{eq:fh_assumption}
    \end{align}
\end{lemma}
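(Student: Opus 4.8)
The plan is to derive each reformulated bound from its counterpart in \eqref{eq:bh_assumption_prel}--\eqref{eq:fh_assumption_prel} by inserting the appropriate intermediate quantity and estimating the remaining purely geometric defect through the $B$- and $|B|$-bounds collected in \eqref{eq:B_estimates}--\eqref{eq:B_det_estimates}. I would dispatch the two scalar estimates \eqref{eq:ch_assumption} and \eqref{eq:fh_assumption} first, since they are essentially identical, and then treat the vector estimate \eqref{eq:bh_assumption}, which contains the only genuine difficulty.

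For the scalar coefficient $c$, I split $|B|c^e - c_h = (|B|-1)c^e + (c^e - c_h)$ and apply the triangle inequality. The second summand is bounded by $C_c h^{k_g+1}$ directly from \eqref{eq:ch_assumption_prel}. For the first I invoke $\|1-|B|\|_{L^\infty(\Gamma_h)} \lesssim h^{k_g+1}$ from \eqref{eq:B_det_estimates} together with $\|c^e\|_{L^\infty(\mcK_h)} = \|c\|_{0,\infty,\Gamma}$, so that the $h$-independent coefficient norm is absorbed into the hidden constant. The estimate for $f$ follows verbatim with $c$ replaced by $f$.

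For the vector field I decompose $|B|B^{-1}b^e - b_h = \big(|B|B^{-1}b^e - P_{\Gamma_h}b^e\big) + \big(P_{\Gamma_h}b^e - b_h\big)$, where the last bracket is controlled by $C_b h^{k_g+1}$ via \eqref{eq:bh_assumption_prel}. To handle the geometric defect I would use that the velocity field $b$ is tangential to $\Gamma$, so that $P_\Gamma b^e = b^e$ on $\Gamma_h$ and hence $P_{\Gamma_h}b^e = P_{\Gamma_h}P_\Gamma b^e$. This rewrites the defect as $\big(|B|B^{-1} - P_{\Gamma_h}P_\Gamma\big)b^e$, and I split the operator as $(|B|-1)B^{-1} + (B^{-1} - P_{\Gamma_h}P_\Gamma)$. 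The first factor is of order $h^{k_g+1}$ by $\|1-|B|\|_{L^\infty(\Gamma_h)} \lesssim h^{k_g+1}$ and $\|B^{-1}\|_{L^\infty(\Gamma_h)} \lesssim 1$, while the second factor is of order $h^{k_g+1}$ directly from \eqref{eq:B_estimates}; multiplying by $\|b^e\|_{L^\infty(\mcK_h)} = \|b\|_{0,\infty,\Gamma}$ and collecting terms gives the claim.

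The step needing the most care is the geometric defect in the vector case, specifically the hidden term $P_{\Gamma_h}(I - P_\Gamma)b^e$. Estimated naively through $P_{\Gamma_h}n_\Gamma = P_{\Gamma_h}(n_\Gamma - n_h)$ it would only yield order $h^{k_g}$ by \eqref{eq:discrete_surf_ass}, one power of $h$ short of the target; it is precisely the tangentiality $P_\Gamma b^e = b^e$ that annihilates this term and restores the optimal order $k_g+1$. Once this cancellation is in place, the remainder is a routine application of the triangle inequality and the submultiplicativity of the $L^\infty$ operator norm.
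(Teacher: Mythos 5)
Your proof is correct and follows essentially the same route as the paper's: the same three-way decomposition into $(|B|-1)B^{-1}b^e$, $(B^{-1}-P_{\Gamma_h}P_\Gamma)b^e$, and $P_{\Gamma_h}b^e-b_h$, controlled respectively by \eqref{eq:B_det_estimates}, \eqref{eq:B_estimates}, and \eqref{eq:bh_assumption_prel}, with the scalar cases handled as the trivial specialization. Your explicit remark that the tangentiality $P_\Gamma b^e = b^e$ is what kills the otherwise only $\mathcal{O}(h^{k_g})$ term $P_{\Gamma_h}(I-P_\Gamma)b^e$ is a point the paper uses only implicitly (in passing from $\Psh\Ps b^e$ to $\Psh b^e$), so making it explicit is a genuine, if minor, improvement in exposition.
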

\begin{proof}
    The proof in~\cite[Sec. 4.2]{BurmanHansboLarsonEtAl2020} for $k_g=1$ immediately generalizes
    to our geometrical assumptions, but for the reader's convenience, we   
    provide a short proof in~\ref{sec:appendix}.
\qed\end{proof}

We conclude this section by recalling an estimate for the co-normal jump of the discrete
velocity $b_h$ which will come in handy when turning to the stability and a priori error
analysis of the proposed CutDG method.
\begin{lemma}
    \label{lem:est-bh-jump}
    Assume that the geometric approximation assumption~\eqref{eq:discrete_surf_ass} holds
    and that $b_h$ satisfies~\eqref{eq:bh_assumption_prel}.
    Then 
    \begin{align}
        \|[b_h; n_E]\|_{L^\infty(\mcE_h)}           
        & \lesssim C_b h^{k_g+1}.
    \end{align}
\end{lemma}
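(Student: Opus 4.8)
The plan is to split the co-normal jump into a contribution governed by the coefficient approximation~\eqref{eq:bh_assumption_prel} and a purely geometric contribution that would vanish identically on a smooth surface. Recalling that $\jump{b_h;n_E}|_E = n_E^+\cdot b_h^+ + n_E^-\cdot b_h^-$, I would insert the projected extended field $\Psh b^e$ on each side,
\begin{align*}
    \jump{b_h;n_E}
    = n_E^+\cdot(b_h^+ - \Psh^+ b^e)
    + n_E^-\cdot(b_h^- - \Psh^- b^e)
    + n_E^+\cdot\Psh^+ b^e
    + n_E^-\cdot\Psh^- b^e,
\end{align*}
where $\Psh^\pm$ denotes the discrete tangential projection associated with $K^\pm$. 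Since $|n_E^\pm| = 1$, the first two terms are controlled in $L^\infty(\mcE_h)$ directly by $\|\Psh b^e - b_h\|_{L^\infty(\mcK_h)}\lesssim C_b h^{k_g+1}$ from~\eqref{eq:bh_assumption_prel}. Because each co-normal $n_E^\pm$ is tangential to $K^\pm$ we have $\Psh^\pm n_E^\pm = n_E^\pm$, so by symmetry of the orthogonal projection $n_E^\pm\cdot\Psh^\pm b^e = n_E^\pm\cdot b^e$; the last two terms therefore collapse, at each point of $E$, to the single geometric quantity $(n_E^+ + n_E^-)\cdot b^e$, which is what remains to be estimated.

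For this geometric term I would exploit that only the tangential part of $b$ enters the problem, so that $b$ may be taken tangential to $\Gamma$ and $b^e\cdot n^e = 0$. Both co-normals $n_E^\pm$ lie in the two-dimensional plane $\Pi$ orthogonal to the $(d-2)$-dimensional edge $E$, as do the discrete normals $\nsh^\pm$; inside $\Pi$ the co-normal $n_E^\pm$ is the rotation of $\nsh^\pm$ by $\pm 90^\circ$, the sign fixed by the requirement that $n_E^\pm$ point out of $K^\pm$. On a smooth surface $\nsh^+ = \nsh^-$ and the two co-normals are exactly opposite, so $n_E^+ + n_E^- = 0$: the jump is an artifact of the faceting of $\Gamma_h$. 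The crucial observation is the location of the leading-order defect. By the normal estimate in~\eqref{eq:discrete_surf_ass} we have $\nsh^\pm = n^e + O(h^{k_g})$ with the perturbation orthogonal to $n^e$ to leading order (both being unit vectors), and a first-order expansion of the two $\pm 90^\circ$ rotations about the common value $n^e$ shows that the first-order part of $n_E^+ + n_E^-$ points along $n^e$, while the component tangential to $\Gamma$ is only of order $h^{2k_g}$.

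Consequently, pairing with the tangential field $b^e$ annihilates the $O(h^{k_g})$ normal component, and only the quadratic remainder survives,
\begin{align*}
    (n_E^+ + n_E^-)\cdot b^e \lesssim b_\infty\, h^{2k_g} \lesssim h^{k_g+1},
\end{align*}
using $k_g\geqslant 1$ and absorbing $b_\infty$ together with the curvature-dependent constants. Combining this with the coefficient-approximation bound for the first two terms and taking the supremum over all edges then yields $\|\jump{b_h;n_E}\|_{L^\infty(\mcE_h)}\lesssim C_b h^{k_g+1}$.

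I expect the second paragraph to be the main obstacle: one must show that the $O(h^{k_g})$ defect in $n_E^+ + n_E^-$ is, to leading order, normal to $\Gamma$, so that the tangential field $b^e$ sees only the $O(h^{2k_g})$ tangential remainder. This is precisely the cancellation that upgrades the naive $O(h^{k_g})$ estimate to the claimed order, and it is where the tangentiality of $b$ is indispensable; a careful local-frame computation (or the analogue of the argument underlying Lemma~\ref{lem:coeff-est} in the cited references) is required to make it fully rigorous.
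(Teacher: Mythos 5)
Your proposal is correct, and its first half coincides exactly with the paper's own argument: the paper also splits $\jump{b_h;n_E}$ into the coefficient error $\jump{b_h-b^e;n_E}$ --- handled precisely as you do, via $b^{e,\pm}\cdot n_E^{\pm}=(\Psh b^e)^{\pm}\cdot n_E^{\pm}$ and assumption~\eqref{eq:bh_assumption_prel} --- plus the purely geometric defect $(n_E^++n_E^-)\cdot b^e$, with the tangentiality of $b$ used implicitly there as well. Where you genuinely diverge is in the treatment of this geometric term. The paper introduces the lifted co-normals $n_{E^l}^{\pm}$, which are exactly opposite so that $\jump{b^e;n_{E^l}^e}=0$, and then bounds $\|\Ps^e n_E^{+}-n_{E^l}^{+,e}\|_{L^\infty(\Gamma_h)}$ by an explicit coordinate computation involving the differential $Dp=\Ps(\Id-\rho\mcH)\Psh$ of the closest point projection and a wedge-product representation of the lifted co-normal; this yields $\mcO(h^{k_g+1})$, the position error $\rho$ entering through $Dp$. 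You instead work entirely in the two-plane $\Pi$ orthogonal to $E$ and never touch the closest point projection: writing $n_E^{\pm}$ as oppositely-signed $90^\circ$ rotations of $\nsh^{\pm}$ gives $n_E^++n_E^-=\pm R(\nsh^+-\nsh^-)$, and your claimed cancellation does hold and is completable by routine means --- the unit-length constraint forces $(\nsh^{\pm}-n^e)\cdot n^e=\mcO(h^{2k_g})$, so decomposing $\nsh^+-\nsh^-$ in the orthonormal basis $\{\nsh^+,n_E^+\}$ of $\Pi$ gives a coefficient $(\nsh^+-\nsh^-)\cdot\nsh^+=1-\nsh^+\cdot\nsh^-=\mcO(h^{2k_g})$ along $\nsh^+$-rotated-to-$n_E^+$, and an $\mcO(h^{k_g})$ coefficient along $n_E^+$-rotated-to-$\nsh^+$; pairing with the tangential field $b^e$, for which $|b^e\cdot\nsh^+|=|b^e\cdot(\nsh^+-n^e)|\lesssim b_{\infty}h^{k_g}$, then yields $|(n_E^++n_E^-)\cdot b^e|\lesssim b_{\infty}h^{2k_g}\lesssim h^{k_g+1}$ since $k_g\geqslant 1$. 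Your route is more elementary than the paper's computation and even slightly sharper for the geometric term when $k_g\geqslant 2$ (the paper's bound for it is capped at $\mcO(h^{k_g+1})$ by $\rho$), although the lemma's overall rate is limited to $h^{k_g+1}$ by the coefficient error in either case. The only point you should make explicit is the sign claim behind ``rotation by $\pm 90^\circ$'': that the two rotations carry opposite signs follows from the outward-pointing convention together with the fact that both $\nsh^{\pm}$ lie within $\mcO(h^{k_g})$ of $n^e$, which excludes a folded configuration for $h$ small --- this uses the bijectivity of $p:\Gamma_h\to\Gamma$ assumed in~\eqref{eq:discrete_surf_ass} and mirrors orientation conventions the paper's own frame computation also takes for granted.
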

\begin{proof}
    A proof for the case $d=2$ and $k_g = 1$ can be found in, e.g., \cite[Lemma 3.6]{OlshanskiiReuskenXu2014}. For the reader's convenience, 
    a slightly generalized proof for $d \geqslant 2$ and $k_g \geqslant 1$ is given in~\ref{sec:appendix}.
\qed\end{proof}

\section{Stability analysis}
\label{sec:stab-analysis}
A key observation made at the end of Section~\ref{ssec:disc_weak_form} 
is that the purely surface-based bilinear form $a_h$ and its
associated ``norm'' $\tn \cdot \tnsd$ do not provide sufficient control over
a discrete function $v_h \in V_h$ defined on the active mesh $\mcT_h$.
The major objective of this section is to show that if we augment
$a_h$ by a suitably constructed stabilization form $s_h$, control over
$V_h$ in the resulting enhanced norm $\tn \cdot \tnsdh$ is regained
which allows us to prove a geometrically robust inf-sup condition with
respect to the $\tn\cdot\tnsdh$ norm.

\subsection{Construction of the stabilization form $s_h$}
\label{ssec:ghost_penalties}
As the model problem~\eqref{eq:continuous_problem} consists of an
advection-reaction operator, it is natural to assume that a suitably
designed stabilization
will acknowledge this.
We thus start by considering the norm part which is typically associated with the reaction term.
Here, it is more natural to use the \emph{rescaled} or \emph{extended} $L^2$
norm $(\tau_c h)^{-\onehalf}\| v_h \|_{\mcT_h}$ instead of the purely surface-based norm
$\tau_c^{-\onehalf} \| \cdot \|_{\Gamma_h}$ since the former provides
a proper norm
for discrete functions defined on the active mesh $\mcT_h$.
The following lemma (first proved in~\cite{BuermanHansboLarsonEtAl2018,GrandeLehrenfeldReusken2018})
then shows that for a \emph{continuous}, discrete function $v_h \in \PPck$,
the extended $L^2$ norm can be bounded by the surface $L^2$ norm if enhanced by the
volume-based normal gradient stabilization, which provides
sufficient control in normal directions.
\begin{lemma}
For $v \in \PPck$ it holds that
\begin{align}
    h^{-1}\|v\|_{\mcT_h}^2 &\lesssim \|v\|_{\mcK_h}^2 + h \|n_\Gammah \cdot \nabla v\|_{\mcT_h}^2.
    \label{eq:ineqcontinuous}
\end{align}
\label{lem:ineqcontinuous}
\end{lemma}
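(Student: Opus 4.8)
The plan is to prove the estimate by a fundamental-theorem-of-calculus argument along rays emanating from the discrete surface $\Gammah$ in the normal direction $\nsh$, crucially exploiting the \emph{global} continuity of $v \in \PPck$ so that such rays may cross interior faces without producing jump contributions. The scaling is consistent with this idea: since $\|v\|_T^2 \sim h \|v\|_{\Gamma_h \cap T}^2$ for a function resolved on scale $h$, the left-hand side and the surface term balance, while the normal-gradient term supplies exactly the missing control in the transverse direction.

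First I would set up a tubular parametrization of the active domain $\mcN_h$. Since every active element touches $\Gammah$ and has diameter $\lesssim h$, every point of $\mcN_h$ lies within distance $\lesssim h$ of $\Gammah$; because $h \lesssim \kappa^{-1}$, the map $(x_0, s) \mapsto x_0 + s\,\nsh(x_0)$ with $x_0 \in \Gammah$ and $|s| \lesssim h$ is a bijection onto a tube containing $\mcN_h$, with Jacobian bounded above and below by constants independent of $h$ and of the cut position. Extending $\nsh$ to $\mcN_h$ so that it is constant along these rays, for a.e. $x \in \mcN_h$ with base point $x_0 \in \Gammah$ and $\ell = |x - x_0| \lesssim h$ the fundamental theorem of calculus gives
\[
    v(x) = v(x_0) + \int_0^{\ell} (\nsh \cdot \nabla v)(x_0 + s\,\nsh)\,\mathrm{d}s,
\]
and after squaring and Cauchy--Schwarz,
\[
    v(x)^2 \lesssim v(x_0)^2 + h \int_0^{\ell} \big((\nsh\cdot\nabla v)(x_0 + s\,\nsh)\big)^2\,\mathrm{d}s.
\]

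Next I would integrate this pointwise bound over $\mcN_h$. Applying the change of variables $x = x_0 + s\,\nsh(x_0)$ together with the uniform Jacobian bounds converts $\int_{\mcN_h} v(x)^2\,\dx$ into a surface integral of $v(x_0)^2$ over $\Gammah$, contributing $\lesssim h\,\|v\|_{\mcK_h}^2$ after accounting for the $O(h)$ ray length, plus a term $\lesssim h^2\,\|\nsh\cdot\nabla v\|_{\mcT_h}^2$ from the second summand (recalling that integration over the union $\mcN_h$ coincides with the broken volume integral over $\mcT_h$). Dividing by $h$ and identifying $\|\cdot\|_{\Gammah} = \|\cdot\|_{\mcK_h}$ then yields the asserted inequality.

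The hard part will be the geometric bookkeeping needed to make the tubular parametrization rigorous and geometrically robust. Because the surface may cut an element in an arbitrarily small piece, the ray reaching a far corner of an element $T$ generally originates from $\Gammah \cap T'$ in a neighboring element $T'$, so a single-element argument does not suffice. This is precisely the step where continuity of $v$ is indispensable, since it lets the integration path traverse interior faces $F \in \mcF_h$ without incurring jump terms; it is exactly this mechanism that fails for discontinuous functions and motivates the jump penalties added later. I would also need to handle the piecewise smoothness of $\Gammah$, where the normal projection is only defined a.e. away from the edges $\mcE_h$, and to verify that replacing the ray-constant extension of $\nsh$ by the extension implicit in the stabilization term perturbs the normal derivative only by an $O(h)$ factor that is harmlessly absorbed.
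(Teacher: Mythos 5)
Note first that the paper does not prove this lemma itself: it is imported from \cite{GrandeLehrenfeldReusken2018,BuermanHansboLarsonEtAl2018}, so your attempt has to be measured against the proofs given there. Your overall strategy --- a fundamental-theorem-of-calculus argument along normal rays, a change of variables with uniform Jacobian bounds, and the observation that global continuity of $v\in\PPck$ lets the rays cross interior faces without jump contributions --- is exactly the strategy of those references, and your closing remark that swapping normal fields costs only a factor that can be absorbed is also the right final step.

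There is, however, a genuine gap in the one step you declare settled. The map $(x_0,s)\mapsto x_0+s\,\nsh(x_0)$, $x_0\in\Gamma_h$, $|s|\lesssim h$, is in general \emph{not} surjective onto a neighborhood of $\mcN_h$: the hypothesis $h\lesssim\kappa^{-1}$ controls the normal map of the smooth surface $\Gamma$, whereas $\Gamma_h$ is only piecewise smooth, and across each edge $E\in\mcE_h$ the discrete normal turns by an angle of order $h^{k_g}$. Wherever this kink is concave, the two ray families launched from the adjacent surface pieces leave an uncovered wedge of opening angle $\sim h^{k_g}$ and radius $\sim h$ (picture the outward rays of a convex polyhedral approximation of the sphere: near every edge there is a wedge hit by no ray). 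These wedges lie inside $\mcN_h$ and have positive measure --- a fraction of order $h^{k_g}$ of the tube --- so on a positive-measure set the pointwise representation $v(x)=v(x_0)+\int_0^{\ell}(\nsh\cdot\nabla v)\,\mathrm{d}s$ that drives your whole argument simply does not exist. This is a failure of the covering, not of bookkeeping, and your stated worry (that $\nsh$ is defined only a.e.\ on $\Gamma_h$) is not the real issue. The repair used in the cited references is to run the rays along the \emph{exact} normal field $n^e=n\circ p$, i.e., along the fibers of the closest point projection, which genuinely foliate $U_\delta(\Gamma)\supset\mcN_h$ and meet $\Gamma_h$ exactly once each, since $p:\Gamma_h\to\Gamma$ is a bijection; this yields the estimate with $h\|n^e\cdot\nabla v\|_{\mcT_h}^2$ on the right, after which one exchanges $n^e$ for $\nsh$ using $\|n^e-\nsh\|_{L^\infty(\Gamma_h)}\lesssim h^{k_g}$ together with the element-wise inverse inequality $\|\nabla v\|_T\lesssim h^{-1}\|v\|_T$, producing an error $\lesssim h^{2k_g-1}\|v\|_{\mcT_h}^2$ that is absorbed into the left-hand side $h^{-1}\|v\|_{\mcT_h}^2$ for $h$ small enough (this is where $k_g\geqslant 1$ enters). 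Alternatively one could keep your discrete rays and control $v$ on the uncovered wedges by a thin-set polynomial estimate ($\|v\|_{S}^2\lesssim(|S|/|T|)\,\|v\|_{T}^2$ for $S\subset T$, valid because $v$ is a polynomial on the whole background element) followed by absorption, but that is an additional idea your proposal does not supply.
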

\noindent Our first task is to extend the previous lemma to \emph{discontinuous} element-wise polynomials.
\begin{lemma}
\label{lem:inverse_discontinuous}
For $v \in \PPdck$ it holds that
\begin{align}
    h^{-1}\|v\|_{\mcT_h}^2 &\lesssim \|v\|_{\mcK_h}^2 + \|[v]\|_{\mcF_h}^2 + h \|n_{\Gamma_h} \cdot \nabla v\|_{\mcT_h}^2.
\end{align}
\end{lemma}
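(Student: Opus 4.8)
The plan is to reduce the discontinuous estimate to the continuous one of Lemma~\ref{lem:ineqcontinuous} by peeling off the jumps with the Oswald interpolant. I would set $v_c \coloneqq \mcO_h(v) \in \PPck$ and write $v = v_c + (v - v_c)$, noting that both $v_c$ and the remainder $v - v_c$ lie in $\PPdck$, so that inverse inequalities apply to each and the remainder is entirely governed by face jumps through Lemma~\ref{lem:oswald}. Splitting the left-hand side by the triangle inequality gives
\begin{align*}
    h^{-1}\|v\|_{\mcT_h}^2 \lesssim h^{-1}\|v_c\|_{\mcT_h}^2 + h^{-1}\|v - v_c\|_{\mcT_h}^2 .
\end{align*}
The second term is immediate: summing the Oswald estimate~\eqref{eq:oswald_interp_est} over all $T \in \mcT_h$ and using that, by shape regularity, each face $F$ occurs in a uniformly bounded number of the sets $\mcF_h(T)$, yields $h^{-1}\|v - v_c\|_{\mcT_h}^2 \lesssim \|[v]\|_{\mcF_h}^2$. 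For the first term I apply Lemma~\ref{lem:ineqcontinuous} to the \emph{continuous} function $v_c$, obtaining $h^{-1}\|v_c\|_{\mcT_h}^2 \lesssim \|v_c\|_{\mcK_h}^2 + h\|n_{\Gamma_h}\cdot\nabla v_c\|_{\mcT_h}^2$, after which it remains to trade $v_c$ for $v$ in the two right-hand terms at the cost of additional jump contributions.

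For the surface $L^2$ term, the triangle inequality gives $\|v_c\|_{\mcK_h} \leqslant \|v\|_{\mcK_h} + \|v - v_c\|_{\mcK_h}$. Since $v - v_c \in \PPdck$, the discrete trace inverse inequality~\eqref{eq:inversegamma} bounds $\|v - v_c\|_{\mcK_h}^2 \lesssim h^{-1}\|v - v_c\|_{\mcT_h}^2$, which is again controlled by $\|[v]\|_{\mcF_h}^2$ through the Oswald estimate. Hence $\|v_c\|_{\mcK_h}^2 \lesssim \|v\|_{\mcK_h}^2 + \|[v]\|_{\mcF_h}^2$, as desired.

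The normal-gradient term is where the argument needs care, and I expect it to be the main obstacle. The crude bound $h\|n_{\Gamma_h}\cdot\nabla v_c\|_{\mcT_h}^2 \lesssim h\|\nabla v_c\|_{\mcT_h}^2$ would leave a \emph{full} gradient of $v$ on the right and thereby destroy the restriction to the normal direction that makes the estimate sharp. Instead I would split only the correction, writing $n_{\Gamma_h}\cdot\nabla v_c = n_{\Gamma_h}\cdot\nabla v + n_{\Gamma_h}\cdot\nabla(v_c - v)$: the first summand is precisely the target right-hand side quantity, while for the correction I discard the unit normal and invoke the inverse inequality~\eqref{eq:inverse} (with $i=0$, $j=1$), so that
\begin{align*}
    h\|n_{\Gamma_h}\cdot\nabla(v_c - v)\|_{\mcT_h}^2 \lesssim h\|\nabla(v_c - v)\|_{\mcT_h}^2 \lesssim h^{-1}\|v_c - v\|_{\mcT_h}^2 \lesssim \|[v]\|_{\mcF_h}^2 ,
\end{align*}
once more by Lemma~\ref{lem:oswald}. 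Collecting the three contributions then gives the claimed inequality. The single delicate point is thus to keep the normal projection attached to the genuine $v$-term and to absorb the full gradient of the jump-controlled correction via the inverse estimate, rather than bounding $\nabla v_c$ directly.
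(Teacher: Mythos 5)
Your proposal is correct and follows essentially the same route as the paper's own proof: both introduce the Oswald interpolant $\mcO_h(v)$, apply Lemma~\ref{lem:ineqcontinuous} to this continuous function, and then swap back to $v$ in the surface and normal-gradient terms, absorbing the corrections via the inverse estimates \eqref{eq:inverse}, \eqref{eq:inversegamma} and the jump bound of Lemma~\ref{lem:oswald}. The only differences are cosmetic (you divide by $h$ at the outset and spell out the finite-overlap argument when summing \eqref{eq:oswald_interp_est}), so no further changes are needed.
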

\begin{proof} We use the Oswald interpolant $\mcO_h$ reviewed in Section~\ref{ssec:quasi-interpolation} to
create a continuous version of~$v$ which is eligible for an application of
Lemma~\ref{lem:ineqcontinuous}. Set $\widetilde{v} = \mcO_h(v) \in
\PPck$. 
Using Lemma~\ref{lem:ineqcontinuous} in combination
with the inverse estimates~\eqref{eq:inverse},\eqref{eq:inversegamma},
and Lemma~\eqref{lem:oswald} on the Oswald interpolant
results in the following chain of estimates:
\begin{align}
    \|v\|_{\mcT_h}^2 &\lesssim \|\widetilde{v}\|_{\mcT_h}^2 + \|v - \widetilde{v}\|_{\mcT_h}^2\\
    &\lesssim h \|\widetilde{v}\|_{\mcK_h}^2 + h^2 \|n_{\Gamma_h} \cdot \nabla \widetilde{v}\|_{\mcT_h}^2 + \|v - \widetilde{v}\|_{\mcT_h}^2\\
    &\lesssim h \|v\|_{\mcK_h}^2 + h \|\widetilde{v} - v\|_{\mcK_h}^2 + h^2 \|n_{\Gamma_h} \cdot \nabla v\|_{\mcT_h}^2 + h^2 \|n_{\Gamma_h} \cdot \nabla (\widetilde{v} - v)\|_{\mcT_h}^2 + \|v - \widetilde{v}\|_{\mcT_h}^2\\
    &\lesssim h \|v\|_{\mcK_h}^2 + \|\widetilde{v} - v\|_{\mcT_h}^2 + h^2 \|n_{\Gamma_h} \cdot \nabla v\|_{\mcT_h}^2 +  \|\widetilde{v} - v\|_{\mcT_h}^2 + \|v - \widetilde{v}\|_{\mcT_h}^2\\
    &\lesssim h \|v\|_{\mcK_h}^2 + h^2 \|n_{\Gamma_h} \cdot \nabla v\|_{\mcT_h}^2 + \|v - \widetilde v\|_{\mcT_h}^2\\
    &\lesssim h \|v\|_{\mcK_h}^2 + h^2 \|n_{\Gamma_h} \cdot \nabla v\|_{\mcT_h}^2 + h\|[v]\|_{\mcF_h}^2.
\end{align}
\qed\end{proof}
The previous lemma motivates the following definition of a reaction-term associated stabilization~$s_h^c$ of the form
\begin{align}
    \label{eq:def-shc}
    s_h^c(v, w) 
    \coloneqq
     \gamma^c_0 \tau_c^{-1}([v], [w])_{\mcF_h} + \gamma^c_n \tau_c^{-1} h
    (n_{\Gamma_h} \cdot \nabla v, n_{\Gamma_h} \cdot \nabla
    w)_{\mcT_h}
    \quad \text{for } v,w \in V_h,
\end{align}
with $\gamma_0^c$ and $\gamma_n^c$ being dimensionless, positive stability
parameters. 
Thanks to Lemma~\ref{lem:inverse_discontinuous},
incorporating $s_h^c$ into $s_h$ gives us control over the
extended $L^2$ norm in the sense that
\begin{align}
    (\tau_c h)^{-1} \| v \|_{\mcT_h}^2
    \lesssim
    \tn v\tnuph^2
    \label{eq:l2norm-extension-property}
\end{align}
holds for $v \in V_h$. 
We refer to \eqref{eq:l2norm-extension-property} by saying that $s_h^c$
satisfies an \emph{$L^2$-norm extension property}.

We turn to the stabilization of the $\tn \cdot \tnsd$ norm.
As in the analysis of the classical upwind stabilized DG
method~\cite{BrezziMariniSueli2004,DiPietroErn2012}, we will
exploit that the scaled streamline derivative $\phi_b b_h \cdot \nablash
v_h = \phi_b b_h \cdot \nabla v_h$ is (almost) a valid test function if
only $b_h$ is replaced by an element-wise constant, $\widetilde{b}_h$,
which satisfies~\eqref{eq:b-bhtilde_assump}.
Moreover, similar to $\btilde_h$
we can construct 
an element-wise constant approximation, $\widetilde{n}_{\Gamma_h}$,
of $n_{\Gamma_h}$ which
satisfies the estimate
\begin{align}
\|n_{\Gamma_h} -\widetilde{n}_{\Gamma_h}\|_{\infty} \lesssim \kappa h.
\label{eq:nh-nhtilde-est}
\end{align}
The next lemma will help us
to quantify the errors introduced 
when switching between 
$b_h$ and $\widetilde{b}_h$ respectively 
$n_{\Gammah}$ and $\widetilde{n}_{\Gammah}$ 
in the forthcoming stability analysis.
\begin{lemma}
    For $v \in V_h$, it holds that
\begin{align}
    \bigl(b_{\infty} \| \widetilde{n}_{\Gammah}-n_{\Gammah} \|_{0,\infty,\mcT_h}^2 +
    \dfrac{\phi_b}{h} \|\widetilde{b}_h - b_h\|_{0,\infty, \mcT_h}^2 
    \bigr)
    \|\nabla v\|_{\mcT_h}^2 
    \lesssim 
    (\tau_c h)^{-1}\| v\|_{\mcT_h}^2
    \lesssim 
    \tn v \tnuph^2.
\label{eq:bh-b_est_upw}
\end{align}
\label{lem:bh-b_est_upw}
\end{lemma}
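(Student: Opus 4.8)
The plan is to treat the asserted chain as two independent estimates. The right-hand inequality $(\tau_c h)^{-1}\|v\|_{\mcT_h}^2 \lesssim \tn v \tnuph^2$ is precisely the $L^2$-norm extension property~\eqref{eq:l2norm-extension-property} already secured through $s_h^c$ and Lemma~\ref{lem:inverse_discontinuous}, so I would simply invoke it and spend no further effort on it. All the work therefore concerns the left-hand inequality, and the strategy is to trade the gradient for an $L^2$ norm and then show that the remaining scalar prefactor is dominated by $h\,\tau_c^{-1}$.

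First I would eliminate $\|\nabla v\|_{\mcT_h}^2$ by the element-wise inverse estimate~\eqref{eq:inverse} (with $i=0$, $j=1$), which gives $\|\nabla v\|_{\mcT_h}^2 \lesssim h^{-2}\|v\|_{\mcT_h}^2$. Since $h^{-2}\cdot h\,\tau_c^{-1} = (\tau_c h)^{-1}$, the left-hand inequality then reduces to the scalar claim
\begin{equation*}
b_{\infty}\|\widetilde{n}_{\Gammah}-n_{\Gammah}\|_{0,\infty,\mcT_h}^2 + \frac{\phi_b}{h}\|\btilde_h - b_h\|_{0,\infty,\mcT_h}^2 \;\lesssim\; h\,\tau_c^{-1}.
\end{equation*}
For the first prefactor I would use~\eqref{eq:nh-nhtilde-est} to obtain $b_{\infty}\|\widetilde{n}_{\Gammah}-n_{\Gammah}\|_{0,\infty,\mcT_h}^2 \lesssim b_{\infty}\kappa^2 h^2$. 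For the second, I would combine the triangle inequality through $P_{\Gammah}b^e$ with the first-order bounds~\eqref{eq:b-bh_assump} and~\eqref{eq:b-bhtilde_assump} to get $\|\btilde_h - b_h\|_{0,\infty,\mcT_h} \lesssim h(b_{\infty}\kappa + |b|_{1,\infty,\Gamma})$, so that, using $\phi_b/h = 1/b_{\infty}$, the term is bounded by $b_{\infty}^{-1}h^2(b_{\infty}\kappa + |b|_{1,\infty,\Gamma})^2$.

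The final step is pure bookkeeping against the definition $\tau_c^{-1} = \|c\|_{0,\infty,\Gamma}+|b|_{1,\infty,\Gamma}+b_{\infty}\kappa$ and the mesh-resolution assumption~\eqref{eq:mesh-resol-assump}. For the normal term, $b_{\infty}\kappa^2 h^2 \lesssim h\,\tau_c^{-1}$ follows from $b_{\infty}\kappa \le \tau_c^{-1}$ together with $\kappa h \le 1$, i.e.\ the third inequality in~\eqref{eq:c_phi_2}. For the velocity term, peeling off one factor $(b_{\infty}\kappa+|b|_{1,\infty,\Gamma}) \le \tau_c^{-1}$ reduces the claim to $\tfrac{h}{b_{\infty}}(b_{\infty}\kappa+|b|_{1,\infty,\Gamma}) = h\kappa + \phi_b|b|_{1,\infty,\Gamma} \lesssim 1$, and both summands are $\le 1$ by $\kappa h \le 1$ and $\phi_b\,\tau_c^{-1}\le 1$ from~\eqref{eq:mesh-resol-assump}. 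I expect the only genuinely delicate point to be the second prefactor, namely reconciling the $\mcK_h$-based coefficient assumptions with the $\mcT_h$-norm appearing in the statement; this rests on the $L^\infty$ (hence pointwise) character of~\eqref{eq:b-bh_assump}--\eqref{eq:b-bhtilde_assump} and on the patch-wise constant construction of $\btilde_h$, which extends naturally to the whole active mesh.
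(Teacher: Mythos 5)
Your proposal is correct and follows essentially the same route as the paper's proof: the right-hand inequality is the $L^2$-norm extension property~\eqref{eq:l2norm-extension-property}, and the left-hand one is obtained from the inverse estimate~\eqref{eq:inverse}, the bounds~\eqref{eq:nh-nhtilde-est} and \eqref{eq:b-bh_assump}--\eqref{eq:b-bhtilde_assump} (via the triangle inequality through $P_{\Gamma_h}b^e$), and the same bookkeeping with $b_\infty\kappa + |b|_{1,\infty,\Gamma}\leqslant \tau_c^{-1}$, $\kappa h \lesssim 1$, and $\tau_c^{-1}\phi_b \leqslant 1$. The only (cosmetic) difference is that you factor out the inverse estimate once and reduce everything to a scalar bound on the prefactors, whereas the paper treats the two terms separately.
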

\begin{proof}
    First, simply using inverse estimate~\eqref{eq:inverse}
    together with~\eqref{eq:nh-nhtilde-est}
    shows that
    \begin{align}
    b_{\infty} \| \widetilde{n}_{\Gammah}-n_{\Gammah} \|_{0,\infty,\mcT_h}^2
    \|\nabla v\|_{\mcT_h}^2
    &\lesssim 
    (b_{\infty} \kappa) (\kappa h)  h^{-1}\| v\|_{\mcT_h}^2
    \lesssim 
    (\tau_c h)^{-1}\| v\|_{\mcT_h}^2
    \label{eq:bhtildevsbh-I}
    \end{align}
    since $b_{\infty} \kappa \leqslant \tau_c^{-1}$
    by the definition of $\tau_c$ \eqref{eq:def-t_c} and $\kappa h \lesssim 1$
    \eqref{eq:c_phi_2}.
    Next, by \eqref{eq:def-t_c} and assumption~\eqref{eq:mesh-resol-assump}
    $(|b|_{1,\infty, \Gamma} + b_{\infty} \kappa) \phi_b \leqslant
    \tau_c^{-1}\phi_b \lesssim 1$ 
    holds
    and therefore adding and subtracting $\Psh b$
    in the second term in the left-hand side of~\eqref{eq:bh-b_est_upw}
    together with \eqref{eq:b-bh_assump}, \eqref{eq:b-bhtilde_assump}, and~\eqref{eq:inverse}
    yields
\begin{align}
  \dfrac{\phi_{b}}{h} \|\btilde_h - b_h\|_{0,\infty,\mcT_h}^2 \|\nabla v \|_{\mcT_h}^2
  \lesssim
  \dfrac{\phi_{b}}{h}
  (|b|_{1,\infty,T} + b_{\infty} \kappa)^2 \| v \|_{\mcT_h}^2
  \lesssim 
  (\tau_c h)^{-1}\| v \|_{\mcT_h}^2.
    \label{eq:bhtildevsbh-II}
\end{align}
Collecting the bounds~\eqref{eq:bhtildevsbh-I} and~\eqref{eq:bhtildevsbh-II}
proves the first inequality in~\eqref{eq:bh-b_est_upw}
while the second follows immediately from
$L^2$-norm extension property~\eqref{eq:l2norm-extension-property}.
\qed\end{proof}
With these preparations at hand, we can now show that  ---similar to the extended $L^2$-norm--- the \emph{extended streamline
diffusion norm} can be controlled by the surface streamline diffusion norm if suitable stabilization terms
are added:
\begin{lemma}
Let $\btilde_h$ be an element-wise constant vector field satisfying~\eqref{eq:b-bhtilde_assump}. 
For $v \in \PPdck$  we have the estimate
\begin{align}
    \frac{1}{h}\|\phi_b^{\onehalf} \widetilde{b}_h \cdot \nabla v\|_{\mcT_h}^2 
    \lesssim 
    &\|\phi_b^{\onehalf} b_h \cdot \nablash v\|_{\mcK_h}^2 
    + \frac{b_{\infty}}{h}\|[v]\|_{\mcF_h}^2
    + b_{\infty} h\|[n_F \cdot \nabla v]\|_{\mcF_h}^2
    \nonumber
    \\
    &\quad
    + b_{\infty} \|n_{\Gamma_h} \cdot \nabla v\|_{\mcT_h}^2 
    + \tn v \tnuph^2.
\end{align}
\label{lem:streamline_stab}
\end{lemma}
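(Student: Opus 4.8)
The plan is to reduce the whole estimate to a single application of Lemma~\ref{lem:inverse_discontinuous} to the element-wise polynomial $w \coloneqq \btilde_h\cdot\nabla v$. Since $\btilde_h$ is element-wise constant and $v\in\PPdck$, we have $w\in\PP_{\mathrm{dc}}^{k-1}(\mcT_h)\subset V_h$, so the lemma applies and gives
\[
h^{-1}\|w\|_{\mcT_h}^2 \lesssim \|w\|_{\mcK_h}^2 + \|[w]\|_{\mcF_h}^2 + h\|n_{\Gammah}\cdot\nabla w\|_{\mcT_h}^2 .
\]
Multiplying through by $\phi_b = h/b_{\infty}$ turns the left-hand side into exactly $\tfrac{1}{h}\|\phi_b^{\onehalf}\btilde_h\cdot\nabla v\|_{\mcT_h}^2$, the quantity to be bounded, and produces the three contributions $\|\phi_b^{\onehalf}w\|_{\mcK_h}^2$, $\tfrac{h}{b_{\infty}}\|[w]\|_{\mcF_h}^2$, and $\tfrac{h^2}{b_{\infty}}\|n_{\Gammah}\cdot\nabla w\|_{\mcT_h}^2$. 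The recurring theme when matching these against the right-hand side is that every time $\btilde_h$ or $n_{\Gammah}$ is replaced by the genuine tangential velocity $b_h$ or normal, the discrepancy is of size $h(b_{\infty}\kappa+|b|_{1,\infty,\Gamma})$ (respectively $\kappa h$), and is therefore absorbed into $\tn v\tnuph^2$ via Lemma~\ref{lem:bh-b_est_upw}, after trading a surface/face norm for a volume norm using the inverse trace estimates~\eqref{eq:inversegamma},\eqref{eq:inverseandtrace}.

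For the surface contribution I would split, on each $K\in\mcK_h$, $\btilde_h\cdot\nabla v = b_h\cdot\nablash v + (\btilde_h - b_h)\cdot\nablash v + (\btilde_h\cdot\nsh)(\nsh\cdot\nabla v)$, using that $b_h=\Psh b_h$ is tangential and $\nabla v = \nablash v + (\nsh\cdot\nabla v)\nsh$. The first summand is precisely the target term $\|\phi_b^{\onehalf}b_h\cdot\nablash v\|_{\mcK_h}^2$. For the other two, both $\|\btilde_h-b_h\|_{0,\infty,\mcK_h}$ and $|\btilde_h\cdot\nsh|=|(\btilde_h-\Psh b^e)\cdot\nsh|$ are $\lesssim h(b_{\infty}\kappa+|b|_{1,\infty,\Gamma})$ by~\eqref{eq:b-bh_assump}--\eqref{eq:b-bhtilde_assump} (using $\Psh b^e\cdot\nsh=0$); after $\|\nablash v\|_{\mcK_h}^2\lesssim h^{-1}\|\nabla v\|_{\mcT_h}^2$ they take the form $\tfrac{\phi_b}{h}\|\btilde_h-b_h\|_\infty^2\|\nabla v\|_{\mcT_h}^2$ controlled by Lemma~\ref{lem:bh-b_est_upw}. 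For the face jump I would expand $[\btilde_h\cdot\nabla v]=\{\btilde_h\}\cdot[\nabla v]+[\btilde_h]\cdot\{\nabla v\}$ and decompose $[\nabla v]$ into its $n_F$-component $[n_F\cdot\nabla v]$ and its face-tangential part $\nabla_\tau[v]$, bounded by $\|\nabla_\tau[v]\|_F\lesssim h^{-1}\|[v]\|_F$. Since $\|\{\btilde_h\}\|_\infty\lesssim b_{\infty}$, this produces exactly $b_{\infty} h\|[n_F\cdot\nabla v]\|_{\mcF_h}^2 + \tfrac{b_{\infty}}{h}\|[v]\|_{\mcF_h}^2$, while the term carrying $[\btilde_h]$ (of size $h(b_{\infty}\kappa+|b|_{1,\infty,\Gamma})$, because $b^e\in W^{1,\infty}$ is continuous across faces) is again absorbed into $\tn v\tnuph^2$.

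The main obstacle is the volume term $\tfrac{h^2}{b_{\infty}}\|n_{\Gammah}\cdot\nabla(\btilde_h\cdot\nabla v)\|_{\mcT_h}^2$. A naive bound $|n_{\Gammah}\cdot\nabla(\btilde_h\cdot\nabla v)|\lesssim b_{\infty}|D^2 v|$ followed by the inverse inequality~\eqref{eq:inverse} only returns the \emph{full} gradient $b_{\infty}\|\nabla v\|_{\mcT_h}^2$, which is not controlled by the right-hand side. The fix is to exploit the symmetry of the Hessian: replacing $n_{\Gammah}$ by the element-wise constant $\widetilde{n}_{\Gammah}$ and using that both $\widetilde{n}_{\Gammah}$ and $\btilde_h$ are constant on each $T$ yields $\widetilde{n}_{\Gammah}\cdot\nabla(\btilde_h\cdot\nabla v)=\btilde_h\cdot\nabla(\widetilde{n}_{\Gammah}\cdot\nabla v)$, so~\eqref{eq:inverse} now gives only a \emph{normal} derivative $b_{\infty}\|\widetilde{n}_{\Gammah}\cdot\nabla v\|_{\mcT_h}^2$. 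Switching $\widetilde{n}_{\Gammah}$ back to $n_{\Gammah}$ produces the target $b_{\infty}\|n_{\Gammah}\cdot\nabla v\|_{\mcT_h}^2$ plus an error controlled by the first term of Lemma~\ref{lem:bh-b_est_upw}, and the commutator error $(n_{\Gammah}-\widetilde{n}_{\Gammah})\cdot\nabla(\btilde_h\cdot\nabla v)$, estimated with $\|n_{\Gammah}-\widetilde{n}_{\Gammah}\|_\infty\lesssim\kappa h$ and~\eqref{eq:inverse}, is of size $b_{\infty}(\kappa h)^2\|\nabla v\|_{\mcT_h}^2\lesssim\tn v\tnuph^2$. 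Collecting the surface, face, and volume bounds then yields the claimed estimate.
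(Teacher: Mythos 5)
Your proposal is correct and follows essentially the same route as the paper's proof: apply Lemma~\ref{lem:inverse_discontinuous} to $\widetilde{b}_h\cdot\nabla v$, convert the resulting surface and face terms to the target terms by switching between $\widetilde{b}_h$, $b_h$, and $b^e$ (absorbing the discrepancies via Lemma~\ref{lem:bh-b_est_upw}), and handle the normal-gradient volume term with the same key idea of commuting $\widetilde{n}_{\Gamma_h}\cdot\nabla(\widetilde{b}_h\cdot\nabla v)=\widetilde{b}_h\cdot\nabla(\widetilde{n}_{\Gamma_h}\cdot\nabla v)$ via Hessian symmetry. The only differences are cosmetic bookkeeping choices (tangential/normal splitting of $\nabla v$ on $\mcK_h$, and the jump product rule instead of adding and subtracting $b^e$ on faces), which do not change the argument.
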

\begin{proof}
We start by applying Lemma \ref{lem:inverse_discontinuous}
to the discrete function $\phi_b^{\onehalf} \widetilde{b}_h \cdot \nabla v \in V_h$,
yielding
\begin{align}
    \frac{1}{h} \|\phi_b^{\onehalf} \widetilde{b}_h \cdot \nabla v\|_{\mcT_h}^2 
    &\lesssim  \|\phi_b^{\onehalf} \widetilde{b}_h \cdot \nabla v\|_{\mcK_h}^2 
    + \|[\phi_b^{\onehalf} \widetilde{b}_h \cdot \nabla v]\|_{\mcF_h}^2
    + h \|n_{\Gamma_h}\cdot\nabla(\phi_b^{\onehalf} \widetilde{b}_h \cdot \nabla v)\|_{\mcT_h}^2 
    \nonumber 
    \\
    &= I + II + III.
    \label{eq:sdest-I-II-III}
\end{align}
Recalling~\eqref{eq:bh-b_est_upw} and the inverse estimate~\eqref{eq:inversegamma}, we see that term $I$ can be bounded by
\begin{align}
   I 
   \lesssim  
   \|\phi_b^{\onehalf} b_h \cdot \nablash v\|_{\mcK_h}^2 + 
   \dfrac{1}{h}\|\phi_b^{\onehalf} (\widetilde{b}_h - b_h) \cdot \nabla v\|_{\mcT_h}^2 
   \lesssim
   \|\phi_b^{\onehalf} b_h \cdot \nablash v\|_{\mcK_h}^2 + (\tau_c h)^{-1}\| v\|_{\mcT_h}^2
   \label{eq:ghost_streamline_I}
\end{align}
Next, to estimate term $II$, 
we can switch between $\btilde_h$ and $b^e$ and control the difference
term again by the $\tn \cdot \tnuph$ norm. Using the fact that $[b^e] = 0$, we obtain
\begin{align}
    II 
    &\lesssim 
     b_{\infty} h \|[\nabla v]\|_{\mcF_h}^2
    + \|[\phi_b^{\onehalf} (b^e -\widetilde{b}_h) \cdot \nabla v]\|_{\mcF_h}^2
    = II_a + II_b.
\end{align}
Now let $P_F \nabla v$ be the part of $\nabla v$ that is tangential to $F$,
so that $\nabla v = P_F \nabla v + (n_F \cdot \nabla v) n_F$.
Applying an inverse estimate similar to~\eqref{eq:inverse} to $\|P_F \nabla v \|_{F}$ allows us to bound $II_a$ by
\begin{align}
    II_a 
    & = 
     h b_{\infty} \|[P_F \nabla v]\|_{\mcF_h}^2 +  h b_{\infty} \|[n_F \cdot \nabla v]\|_{\mcF_h}^2
    \lesssim 
    \frac{b_{\infty}}{h} \|[v]\|_{\mcF_h}^2 + h b_{\infty} \|[n_F \cdot \nabla v]\|_{\mcF_h}^2.
    \label{eq:ghost_streamline_II_a}
\end{align}
For the second contribution $II_b$, we first recall the
assumptions~\eqref{eq:bhassumptions} which together with
the inverse estimate~\eqref{eq:inverseandtrace} yields
\begin{align}
    II_b 
    \lesssim  
     \frac{\phi_b}{h}
    (b_\infty \kappa + |b|_{1,\infty,\Gamma})^2 \| v\|^2_{\mcT_h}
    \lesssim  
     \frac{\phi_b}{h} \tau_c^{-2}
    \lesssim
    (\tau_c h)^{-1}\|v\|_{\mcT_h}^2.
    \label{eq:ghost_streamline_II_b}
\end{align}
where in the last step we again used that $\tau_c^{-1} \phi_b \lesssim
1$ by our assumption~\ref{eq:mesh-resol-assump} on the mesh
resolution.

Finally, we turn to the remaining term $III$ in~\eqref{eq:sdest-I-II-III}.
Similar to $\btilde_h$, let $\widetilde{n}_{\Gamma_h}$ be an element-wise
constant approximation of $n_{\Gamma_h}$ such that 
$\|n_{\Gamma_h} -\widetilde{n}_{\Gamma_h}\|_{\infty} \lesssim \kappa h$.
Then
$
\widetilde{n}_{\Gamma_h}\cdot\nabla(\widetilde{b}_h \cdot \nabla v)
=
\widetilde{b}_h \cdot\nabla(\widetilde{n}_{\Gamma_h}\cdot \nabla v)
$
and thus in combination with multiple applications
of the inverse estimate~\eqref{eq:inverse} 
we derive that
\begin{align}
    III &= 
    h \|n_{\Gamma_h}\cdot\nabla(\phi_b^{\onehalf} \widetilde{b}_h \cdot \nabla v)\|_{\mcT_h}^2 
    \\
    &\lesssim h \phi_b\|\widetilde{n}_{\Gamma_h}\cdot\nabla(\widetilde{b}_h \cdot \nabla v)\|_{\mcT_h}^2
    + h \phi_b\|(n_{\Gamma_h} - \widetilde{n}_{\Gamma_h})\cdot\nabla(\widetilde{b}_h \cdot \nabla v)\|_{\mcT_h}^2
    \\
    &\lesssim 
    h \phi_b
    \|\widetilde{b}_h \cdot\nabla(\widetilde{n}_{\Gamma_h}\cdot \nabla v)\|_{\mcT_h}^2
    +\phi_b \kappa^2  h^3 \|\nabla(\widetilde{b}_h \cdot \nabla v)\|_{\mcT_h}^2
    \\
    &\lesssim 
    h^{-1} \phi_b b_{\infty}^2
    \|\widetilde{n}_{\Gamma_h}\cdot \nabla v\|_{\mcT_h}^2
    +  \phi_b \kappa^2 b_{\infty}^2 h\|\nabla v\|_{\mcT_h}^2
    \\
    &\lesssim 
    b_{\infty}
    \|\widetilde{n}_{\Gamma_h}\cdot \nabla v\|_{\mcT_h}^2
    + (\tau_c h)^{-1}  
    \| v\|_{\mcT_h}^2
    \label{eq:ghost_streamline_III-inter}
    \\
    &\lesssim 
    b_{\infty}
    \|n_{\Gamma_h}\cdot \nabla v\|_{\mcT_h}^2
    +
    b_{\infty}\|(n_{\Gamma_h}-\widetilde{n}_{\Gamma_h})\cdot \nabla v\|_{\mcT_h}^2
    + 
    (\tau_c h)^{-1} \|v\|_{\mcT_h}^2
    \\
    &\lesssim 
    b_{\infty}
    \|n_{\Gamma_h}\cdot \nabla v\|_{\mcT_h}^2
    + 
    (\tau_c h)^{-1}
    \|v\|_{\mcT_h}^2.
    \label{eq:ghost_streamline_III}
\end{align}
Here, we used the fact that thanks to assumption~\eqref{eq:mesh-resol-assump}, 
$
\phi_b \kappa^2 b_{\infty}^2 \lesssim \tau_c^{-1}
$
to pass to \eqref{eq:ghost_streamline_III-inter},
and 
in the last step, Lemma~\ref{lem:bh-b_est_upw} was employed.
Finally, collecting the obtained bounds
\eqref{eq:ghost_streamline_I},
\eqref{eq:ghost_streamline_II_a},
\eqref{eq:ghost_streamline_II_b}, and
\eqref{eq:ghost_streamline_III}
and applying Lemma~\ref{eq:bh-b_est_upw} once more
to bound $(\tau_c h)^{-1} \|v\|_{\mcT_h}^2$ by $\tn v \tnuph^2$
yields the desired estimate.
\qed\end{proof}
Motivated by Lemma \ref{lem:streamline_stab}, we define now our stabilization for the advection part and set
\begin{align}
    s_h^{a}(v, w) =
     \frac{\gamma^b_0 b_{\infty}}{h} ([v], [w])_{\mcF_h}
     + \gamma^b_1 h b_{\infty} ([n_F \cdot \nabla v], [n_F \cdot \nabla w])_{\mcF_h} 
     + \gamma^b_n b_{\infty} (n_{\Gamma_h} \cdot \nabla v, n_{\Gamma_h} \cdot \nabla w)_{\mcT_h}.
    \label{eq:ghost_ar3}
\end{align}
Thus, combining the reaction- and advection-related stabilization forms 
suggest considering
\begin{align}
    s_h(v, w) 
    &= (\gamma^c_0 \tau_c^{-1} + \dfrac{\gamma^b_0 b_{\infty}}{h})([v], [w])_{\mcF_h} 
    + \gamma^b_1  b_{\infty} h ([n_F \cdot \nabla v], [n_F \cdot \nabla w])_{\mcF_h}
    \nonumber
    \\
    &\quad
    + (\gamma^c_n \tau_c^{-1} h + \gamma^b_n b_{\infty}) (n_{\Gamma_h} \cdot \nabla v, n_{\Gamma_h} \cdot \nabla w)_{\mcT_h}
    \label{eq:sh-def-prelim}
\end{align}
as a candidate for the total stabilization form.
However, thanks to Assumption~\eqref{eq:mesh-resol-assump}, we only need to 
consider the advection part since
$\tau_c^{-1} \leqslant \phi_b^{-1} = \tfrac{b_{\infty}}{h}$,
leading us to the final definition of $s_h$.
\begin{definition}[Stabilization form $s_h$]
    \label{def:sh-def}
The stabilization $s_h$ is given by
\begin{align}
    s_h(v, w) 
    &= \dfrac{\gamma_0 b_{\infty}}{h}([v], [w])_{\mcF_h} 
    + \gamma_1  b_{\infty} h ([n_F \cdot \nabla v], [n_F \cdot \nabla w])_{\mcF_h}
    + \gamma_n b_{\infty} (n_{\Gamma_h} \cdot \nabla v, n_{\Gamma_h} \cdot \nabla w)_{\mcT_h}
    \label{eq:sh-def}
\end{align}
with $\gamma_0, \gamma_1$, and $\gamma_n$  being dimensionless, positive stability
parameters which depend on $k, d$, the quasi-uniformity of $\mcT_h$, and the curvature of $\Gamma$.
\end{definition}
For future reference, we summarize our discussion in the following corollary.
\begin{corollary}
Both the extended $L^2$ and streamline diffusion norm can be
controlled by augmenting the standard streamline diffusion
norm~\eqref{eq:sd-norm-def} with the semi-norm $|\cdot |_{s_h}$ induced by \eqref{eq:sh-def} in the sense 
that
\begin{align}
    \frac{1}{h}\|\phi_b^{\onehalf} \widetilde{b}_h \cdot \nabla v\|_{\mcT_h}^2 +
    \frac{1}{h}\|\tau_c^{-\onehalf}v\|_{\mcT_h}^2 \lesssim \tn v \tnsdh^2
\end{align}
holds for $v \in V_h$.
\label{cor:sh-stab-norm-est}
\end{corollary}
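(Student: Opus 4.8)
The plan is to read the corollary off as a direct assembly of the two extension results already established, exploiting that the three terms defining $s_h$ in \eqref{eq:sh-def} are \emph{exactly} the boundary- and normal-gradient contributions that appear on the right-hand side of Lemma~\ref{lem:streamline_stab}. The starting point is the decomposition $\tn v \tnsdh^2 = \tn v \tnsd^2 + |v|_{s_h}^2$, together with the observation that $|v|_{s_h}^2 = \tfrac{\gamma_0 b_{\infty}}{h}\|[v]\|_{\mcF_h}^2 + \gamma_1 b_{\infty} h\|[n_F\cdot\nabla v]\|_{\mcF_h}^2 + \gamma_n b_{\infty}\|n_{\Gamma_h}\cdot\nabla v\|_{\mcT_h}^2$, whose three summands will absorb the matching terms below.

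First I would dispatch the extended $L^2$ contribution $(\tau_c h)^{-1}\|v\|_{\mcT_h}^2$. Applying Lemma~\ref{lem:inverse_discontinuous} and multiplying by $\tau_c^{-1}$ gives $(\tau_c h)^{-1}\|v\|_{\mcT_h}^2 \lesssim \tau_c^{-1}\|v\|_{\mcK_h}^2 + \tau_c^{-1}\|[v]\|_{\mcF_h}^2 + \tau_c^{-1} h\|n_{\Gamma_h}\cdot\nabla v\|_{\mcT_h}^2$. The first term is precisely the reaction part of $\tn v\tnup^2$ in \eqref{eq:upw-norm-def}. For the remaining two, the mesh-resolution assumption \eqref{eq:mesh-resol-assump}, in the form $\tau_c^{-1}\leqslant \phi_b^{-1}=b_{\infty}/h$ (equivalently $\tau_c^{-1}h\leqslant b_{\infty}$), yields $\tau_c^{-1}\|[v]\|_{\mcF_h}^2 \lesssim \tfrac{b_{\infty}}{h}\|[v]\|_{\mcF_h}^2$ and $\tau_c^{-1}h\|n_{\Gamma_h}\cdot\nabla v\|_{\mcT_h}^2 \lesssim b_{\infty}\|n_{\Gamma_h}\cdot\nabla v\|_{\mcT_h}^2$, both controlled by $|v|_{s_h}^2$. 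This is exactly the \emph{$L^2$-norm extension property} \eqref{eq:l2norm-extension-property} re-read with the final, advection-only $s_h$, and it delivers $(\tau_c h)^{-1}\|v\|_{\mcT_h}^2 \lesssim \tn v\tnuph^2 \leqslant \tn v\tnsdh^2$.

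Next I would bound the extended streamline-diffusion term $\tfrac{1}{h}\|\phi_b^{\onehalf}\widetilde{b}_h\cdot\nabla v\|_{\mcT_h}^2$ by invoking Lemma~\ref{lem:streamline_stab} verbatim. On its right-hand side, the term $\|\phi_b^{\onehalf} b_h\cdot\nablash v\|_{\mcK_h}^2$ is the streamline-diffusion part of $\tn v\tnsd^2$ from \eqref{eq:sd-norm-def}; the three terms $\tfrac{b_{\infty}}{h}\|[v]\|_{\mcF_h}^2$, $b_{\infty}h\|[n_F\cdot\nabla v]\|_{\mcF_h}^2$, and $b_{\infty}\|n_{\Gamma_h}\cdot\nabla v\|_{\mcT_h}^2$ match the three summands of $|v|_{s_h}^2$ one-to-one; and the trailing $\tn v\tnuph^2$ is itself bounded by $\tn v\tnsdh^2$. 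Hence every summand is controlled by $\tn v\tnsdh^2$, giving $\tfrac{1}{h}\|\phi_b^{\onehalf}\widetilde{b}_h\cdot\nabla v\|_{\mcT_h}^2 \lesssim \tn v\tnsdh^2$. Adding this to the estimate of the previous step completes the proof.

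The work here is essentially bookkeeping rather than a new estimate; the one genuine ingredient worth flagging is the absorption step, where the mesh-resolution assumption $\tau_c^{-1}\phi_b\leqslant 1$ is what allows the reaction-scaled stabilization contributions (scaling like $\tau_c^{-1}$) to be dominated by the advection-scaled ones (scaling like $b_{\infty}/h$). This is precisely the remark preceding Definition~\ref{def:sh-def} that justifies dropping $s_h^c$ in favour of the advection-only $s_h$; checking that each term is captured with the correct powers of $h$ and $b_{\infty}$ is the only place where care is required.
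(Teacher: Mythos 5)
Your proposal is correct and follows essentially the same route as the paper, which presents this corollary as a summary of the preceding discussion: Lemma~\ref{lem:inverse_discontinuous} (via the $L^2$-extension property~\eqref{eq:l2norm-extension-property}) for the reaction term, Lemma~\ref{lem:streamline_stab} for the streamline term, and the observation $\tau_c^{-1}\leqslant b_{\infty}/h$ from~\eqref{eq:mesh-resol-assump} to justify that the advection-scaled stabilization~\eqref{eq:sh-def} absorbs the reaction-scaled contributions. Your flagging of that absorption step as the only non-bookkeeping ingredient matches exactly the remark preceding Definition~\ref{def:sh-def}.
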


\subsection{$\mathbf{L^2}$-coercivity for $A_h$} 
\label{ssec:L2-coerc}
Next, we wish to show that the bilinear form $A_h$ is coercive with respect 
to the $\tn \cdot \tnuph$ norm. As usual, the approach is based
on exploiting the skew-symmetry of the advection-related terms via
an integration by parts argument,
but in contrast to the classical Euclidean case,
an additional term arises from the fact that the
co-normal vectors $n_E+$ and $n_E^-$ are not co-planar.
More precisely, for the co-normal jump, the following result holds.
\begin{lemma}
    Given a vector field $b$ and scalar fields $v$ and $w$
    for which we assume the edge jump and average to be well-defined.
     Then
\begin{equation}
    [vb ; n_E] = [v]\{b ; n_E\} + \{v\}[b ; n_E]
\end{equation}
and
\begin{align}
    [b v w ; n_E] = \{b ; n_E\}[v]\{w\} + \{b ; n_E\}\{v\}[w]  + [b ; n_E]\{vw\}.
    \label{eq:threejump}
\end{align}
\label{lem:threejump}
\end{lemma}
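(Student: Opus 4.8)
The plan is to reduce both identities to elementary algebraic manipulations of the one-sided scalar traces living on the edge $E$. To this end I would first introduce the shorthand $\beta^\pm \coloneqq n_E^\pm \cdot b^\pm$, so that by definition the normal-weighted average and jump collapse to the purely scalar quantities $\{b ; n_E\} = \tfrac{1}{2}(\beta^+ - \beta^-)$ and $[b ; n_E] = \beta^+ + \beta^-$. The only point requiring attention is the nonstandard sign convention in these two definitions (a minus sign in the average, a plus sign in the jump), which encodes the fact that the two co-normals $n_E^\pm$ both point outward and need not be antipodal; once this is folded into the scalar reformulation, the familiar Euclidean product-rule structure becomes transparent.

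For the first identity I would simply expand the left-hand side, using $(vb)^\pm = v^\pm b^\pm$, to obtain
\begin{align}
    [vb ; n_E] = n_E^+ \cdot (v^+ b^+) + n_E^- \cdot (v^- b^-) = v^+ \beta^+ + v^- \beta^-.
\end{align}
Expanding the right-hand side $[v]\{b ; n_E\} + \{v\}[b ; n_E]$ in terms of $v^\pm$ and $\beta^\pm$ and collecting the four resulting products, the cross terms $v^+ \beta^-$ and $v^- \beta^+$ cancel against one another and one is left with precisely $v^+ \beta^+ + v^- \beta^-$. This is a one-line verification once the scalar reformulation is in place.

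For the second identity I would avoid repeating a brute-force expansion and instead bootstrap it from the first. Applying the first identity with the scalar $v$ replaced by the scalar product $vw$ yields $[bvw ; n_E] = [vw]\{b ; n_E\} + \{vw\}[b ; n_E]$. It then remains to insert the standard scalar jump--product rule $[vw] = [v]\{w\} + \{v\}[w]$ (itself an immediate consequence of the definitions of $[\cdot]$ and $\{\cdot\}$, via $(v^+ - v^-)(w^+ + w^-) + (v^+ + v^-)(w^+ - w^-) = 2(v^+ w^+ - v^- w^-)$) into the first summand, which directly reproduces the three terms on the right-hand side of~\eqref{eq:threejump}.

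I do not anticipate any genuine obstacle here: both identities hold pointwise on each edge $E$ and are purely algebraic, requiring neither regularity nor geometric information beyond the pointwise definedness of the one-sided traces already assumed in the statement. The sole place demanding care is the bookkeeping of signs induced by the nonstandard normal-weighted definitions, and the $\beta^\pm$ substitution is designed precisely to make that bookkeeping mechanical.
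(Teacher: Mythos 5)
Your proposal is correct and follows essentially the same route as the paper: a direct algebraic expansion in terms of the one-sided traces for the first identity (your $\beta^\pm$ shorthand just streamlines the paper's explicit cancellation of cross terms), and for the second identity the same bootstrap of substituting $vw$ for $v$ and invoking the standard rule $[vw] = [v]\{w\} + \{v\}[w]$.
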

\begin{proof}
Writing out the definition of surface jumps and averages shows that
\begin{align}
    [v]\{b ; n_E\} + \{v\}[b ; n_E] &= (v^+ - v^-)\frac{1}{2}(n_E^+ b^+ - n_E^- b^-) + \frac{1}{2}(v^+ + v^-)(n_E^+ b^+ + n_E^- b^-)\\
    &= \frac{1}{2}(v^+ n_E^+ b^+ - v^+  n_E^- b^- - v^- n_E^+ b^+ + v^- n_E^- b^- \\
    &\hspace{5mm}+ v^+ n_E^+ b^+ + v^+ n_E^- b^- + v^- n_E^+ b^+ + v^- n_E^- b^-)\nonumber\\
    &= v^+ n_E^+ b^+ + v^- n_E^- b^- = [vb; n_E].
\end{align}
Inserting $vw$ for $v$ in the above argument and applying the standard equality for jumps and averages, $[vw] = \{v\}[w] + [v]\{w\}$, yields equation \eqref{eq:threejump}.
\qed\end{proof}

Using Lemma~\ref{lem:threejump} and integrating $(\bhnablash v, w)$ by parts, we see that 
\begin{align}
    (b_h \cdot \nablash v, w)_{\mcK_h} &= -(v, b_h \cdot \nablash w)_{\mcK_h} - (\nablash \cdot b_h v, w)_{\mcK_h} +
    \int_{\mcE_h} [b_h v w ; n_E] \mrm{d}\mcE_h\nonumber\\
    &= -(v, b_h \cdot \nablash w)_{\mcK_h} - (v, \nablash \cdot b_h w)_{\mcK_h} \label{eq:advection-ibp}\\
    &\quad + (\{b_h ; n_E\}[v],\{w\})_{\mcE_h} + (\{b_h ; n_E\}\{v\},[w])_{\mcE_h}\nonumber\\
    &\quad + (\{vw\}, [b_h ; n_E])_{\mcE_h}.\nonumber
\end{align}
If we insert~\eqref{eq:advection-ibp} into~\eqref{eq:ahI}, we see that $a_h(\cdot, \cdot)$ is equivalent to
\begin{align}
    a_h(v, w) &= (c_h v, w)_{\mcK_h}  + (b_h \cdot \nablash v , w)_{\mcK_h} 
    -(\{b_h ; n_E\} [v], \{w\})_{\mcE_h}
    + \frac{1}{2}(|\{b_h ; n_E\}|[v], [w])_{\mcE_h}
    \label{eq:ahIa}
    \\
    &= (v, (c_h - \nablash \cdot b_h)  w)_{\mcK_h}  - (v,  b_h \cdot \nablash w)_{\mcK_h}
    + (\{b_h ; n_E\}\{v\}, [w])_{\mcE_h}  
    \label{eq:ahII}
    \\
    &\quad 
    + \frac{1}{2}(|\{b_h ; n_E\}|[v], [w])_{\mcE_h}
    + (\{vw\}, [b_h ; n_E])_{\mcE_h}.\nonumber
\end{align}
Thus, by combining one half of both~\eqref{eq:ahIa} and \eqref{eq:ahII}, the bilinear form $a_h(\cdot, \cdot)$
can be divided into a symmetric and a skew-symmetric part,
\begin{align}
a_h(v, w) 
&=  
a_h^{\mrm{sy}}(v, w)  + a_h^{\mrm{sk}}(v, w),
\label{eq:ah-decomposition}
\intertext{where}
a_h^{\mrm{sy}}(v, w)  
&=  ((c_h - \frac{1}{2}\nablash \cdot b_h)v, w)_{\mcK_h} + \frac{1}{2}(|\{b_h ; n_E\}|[v], [w])_{\mcE_h}
+ \dfrac{1}{2}(\{vw\}, [b_h ; n_E])_{\mcE_h},
\label{eq:ahsy-def}
\\
 a_h^{\mrm{sk}}(v, w) &= \frac{1}{2}(b_h \cdot \nablash v, w)_{\mcK_h} - \frac{1}{2} (v, b_h \cdot \nablash w)_{\mcK_h}
\label{eq:ahsk-def}
 \\
 &\quad - \frac{1}{2}(\{b_h ; n_E\} [v], \{w\})_{\mcE_h} + \frac{1}{2}(\{b_h ; n_E\}\{v\}, [w])_{\mcE_h}.
 \nonumber
\end{align}
Note that even with the standard
assumption~\eqref{eq:bh-essinf-assump}, it is not obvious that the
symmetric part is positive definite due to the last term
in~\eqref{eq:ahsy-def} arising from the lack of co-planarity of the
edge normal vectors.
Nevertheless, the next lemma shows that thanks to the stabilization term $s_h$,  
the symmetric part $a_h^{\mrm{sy}}$ is in fact $L^2$ coercive.
\begin{lemma}
If the geometry assumption~\eqref{eq:discrete_surf_ass}
and assumption~\eqref{eq:bh-essinf-assump} on the coefficients
$c_h$ and $b_h$ hold, 
the stabilized bilinear form $A_h = a_h + s_h$ is coercive with respect to the stabilized upwind norm; that is,
\begin{align}
    A_h(v, v) \gtrsim c_0\tau_c\tn v\tnuph^2 \quad \foralls v \in V_h.
    \label{eq:upwindstability}
\end{align}
\label{lem:upwindstability}
\end{lemma}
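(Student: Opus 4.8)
The plan is to exploit the symmetric/skew splitting $a_h = a_h^{\mrm{sy}} + a_h^{\mrm{sk}}$ from~\eqref{eq:ah-decomposition}. Since $a_h^{\mrm{sk}}$ in~\eqref{eq:ahsk-def} is skew-symmetric, $a_h^{\mrm{sk}}(v,v)=0$, and therefore
\[
A_h(v,v) = a_h^{\mrm{sy}}(v,v) + |v|_{s_h}^2 .
\]
Reading off~\eqref{eq:ahsy-def}, the diagonal of $a_h^{\mrm{sy}}$ consists of two manifestly nonnegative contributions --- the reaction term $((c_h-\tfrac12\nablash\cdot b_h)v,v)_{\mcK_h}$ and the upwind edge term $\tfrac12(|\{b_h;n_E\}|[v],[v])_{\mcE_h}$ --- together with the single indefinite term $\tfrac12(\{v^2\},[b_h;n_E])_{\mcE_h}$ arising from the non-coplanarity of the co-normals. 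My strategy is to show that the two nonnegative terms plus $|v|_{s_h}^2$ already control $c_0\tau_c\tn v\tnuph^2$, and that the indefinite term is a genuinely higher-order perturbation absorbable for $h$ small.

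For the positive part, assumption~\eqref{eq:bh-essinf-assump} gives $((c_h-\tfrac12\nablash\cdot b_h)v,v)_{\mcK_h}\geq c_{0,h}\|v\|_{\mcK_h}^2$, while the edge term is exactly the second summand of the upwind norm~\eqref{eq:upw-norm-def}. Recalling that $\tn v\tnuph^2=\tau_c^{-1}\|v\|_{\mcK_h}^2+\tfrac12\||\{b_h;n_E\}|^{\onehalf}[v]\|_{\mcE_h}^2+|v|_{s_h}^2$ and using $c_0\tau_c\lesssim 1$ (immediate from the definition of $\tau_c$) together with $c_{0,h}\gtrsim c_0$, each of these three contributions dominates the corresponding $c_0\tau_c$-weighted summand, so that
\[
c_{0,h}\|v\|_{\mcK_h}^2 + \tfrac12\||\{b_h;n_E\}|^{\onehalf}[v]\|_{\mcE_h}^2 + |v|_{s_h}^2 \gtrsim c_0\tau_c\tn v\tnuph^2 .
\]

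The main work lies in the indefinite term. Here I would invoke Lemma~\ref{lem:est-bh-jump}, which gives $\|[b_h;n_E]\|_{L^\infty(\mcE_h)}\lesssim h^{k_g+1}$, so that $|\tfrac12(\{v^2\},[b_h;n_E])_{\mcE_h}|\lesssim h^{k_g+1}\|v\|_{\mcE_h}^2$. The crucial and cut-sensitive step is to trade the surface-edge norm for a volume norm without incurring cut-dependent constants: applying the discrete trace/inverse estimates~\eqref{eq:inverseE} and~\eqref{eq:inverseandtrace} along the chain $E\subset F\subset\partial T$ yields $\|v\|_{\mcE_h}^2\lesssim h^{-1}\|v\|_{\mcF_h}^2\lesssim h^{-2}\|v\|_{\mcT_h}^2$, whence the term is bounded by $h^{k_g-1}\|v\|_{\mcT_h}^2$. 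This is exactly where the stabilization enters: the $L^2$-norm extension property~\eqref{eq:l2norm-extension-property}, which holds thanks to $s_h$, converts the volume norm back via $\|v\|_{\mcT_h}^2\lesssim \tau_c h\tn v\tnuph^2$, giving
\[
\Bigl|\tfrac12(\{v^2\},[b_h;n_E])_{\mcE_h}\Bigr|\lesssim h^{k_g}\tau_c\tn v\tnuph^2 .
\]

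Since $k_g\geq 1$, the prefactor $h^{k_g}$ is small, so for $h$ below a threshold $h_0$ (depending only on the fixed $c_0$ and the hidden constants) this perturbation is absorbed into the positive part established above, leaving $A_h(v,v)\gtrsim c_0\tau_c\tn v\tnuph^2$. I expect this absorption to be the main obstacle: controlling the non-coplanar co-normal term \emph{uniformly in the cut position} is precisely the step that would fail for the unstabilized form, and it hinges on combining the higher-order smallness of $\|[b_h;n_E]\|_{L^\infty}$ from Lemma~\ref{lem:est-bh-jump} with the $s_h$-provided volume control, so that no inverse estimate is ever applied directly on a possibly arbitrarily small surface element or edge.
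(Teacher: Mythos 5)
Your proposal is correct and follows essentially the same route as the paper's proof: the symmetric/skew-symmetric splitting~\eqref{eq:ah-decomposition}, the lower bound on the positive terms via~\eqref{eq:bh-essinf-assump}, the bound $\lvert(\{v^2\},[b_h;n_E])_{\mcE_h}\rvert \lesssim h^{k_g+1}\cdot h^{-2}\|v\|_{\mcT_h}^2$ obtained from Lemma~\ref{lem:est-bh-jump} together with the inverse estimates~\eqref{eq:inverseE},~\eqref{eq:inverseandtrace}, the conversion back to $\tau_c h \tn v\tnuph^2$ via the $L^2$-extension property~\eqref{eq:l2norm-extension-property}, and the final absorption for $h$ small. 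The only differences are presentational: you make explicit the implicit normalizations $c_{0,h}\gtrsim c_0$ and $c_0\tau_c\lesssim 1$ that the paper uses silently.
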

\begin{proof}
    From decomposing $a_h$ into its
    symmetric and skew-symmetric part, cf.~\eqref{eq:ah-decomposition},
    it follows that
\begin{align}
A_h(v, v) 
&= ((c_h - \frac{1}{2}\nablash \cdot b_h)v, v)_{\mcK_h} +  \frac{1}{2}(|\{b_h ; n_E\}|[v], [v])_{\mcE_h} +
(\{v^2\}, [b_h ; n_E])_{\mcE_h} +
s_h(v, v)\\
&\geqslant c_{0,h} (v, v)_{\mcK_h} + \frac{1}{2}(|\{b_h ; n_E\}|[v], [v])_{\mcE_h} + s_h(v, v)
+ (\{v^2\}, [b_h ; n_E])_{\mcE_h}.
\\
& \gtrsim c_{0,h}\tau_c\tn v\tnuph^2
+ (\{v^2\}, [b_h ; n_E])_{\mcE_h}.
\label{eq:ah-L2-coerciv-step-I}
\end{align}
The remaining $(\{v^2\}, [b_h ; n_E])_{\mcE_h}$ term in~\eqref{eq:ah-L2-coerciv-step-I}
can be handled by 
combining the inverse estimates~\eqref{eq:inverseE} and~\eqref{eq:inverseandtrace} with
\eqref{eq:l2norm-extension-property} leading to
\begin{align}
    (\{v^2\}, [b_h ; n_E])_{\mcE_h}
    &\lesssim
    h^{-2} \|v\|_{\mcT_h}^2 C_b h^{k_g+1}
    =
    (h \tau_c)^{-1} \|v\|_{\mcT_h}^2  \tau_c C_b h^{k_g}
    \lesssim
    \tn v \tnuph  \tau_c c_{0,h} \dfrac{C_b h^{k_g}}{c_{0,h}}.
\end{align}
Inserting this into \eqref{eq:ah-L2-coerciv-step-I} we see that
\begin{align}
A_h(v, v)
&\gtrsim \Bigl(1 - \dfrac{C_b h^{k_g}}{c_{0,h}}\Bigr)c_{0,h}\tau_c\tn v\tnuph^2
\gtrsim c_{0,h}\tau_c\tn v\tnuph^2
\end{align}
whenever $h$ is small enough.
\qed\end{proof}

\subsection{Inf-sup condition for $A_h$}
With the newly constructed stabilization form $s_h$ at our disposal 
we are now in the position to derive the main stability 
result for the proposed CutDG formulation, namely that
discrete bilinear form $A_h$ satisfies a \emph{geometrically robust} inf-sup condition with respect
to the stabilized streamline diffusion norm~$\tn\cdot\tnsdh$.
\begin{theorem}
Let $A_h$ be the stabilized discrete bilinear form given in~\eqref{eq:Ah-def}
with $s_h$ defined by~\eqref{eq:sh-def}.
Then for $v \in V_h$, we have that
\begin{align}
    c_0 \tau_c \tn v \tnsdh \lesssim \sup_{w \in V_h} \frac{A_h(v, w)}{\tn w \tnsdh},
\end{align}
where it is implicitly understood that the supremum excludes the case $w = 0$.
\label{thm:infsup_ar}
\end{theorem}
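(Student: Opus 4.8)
The plan is to establish the inf-sup condition by exhibiting, for each $v \in V_h$, an explicit test function $w \in V_h$ for which $A_h(v,w) \gtrsim c_0 \tau_c \tn v \tnsdh^2$ while $\tn w \tnsdh \lesssim \tn v \tnsdh$; dividing then yields the claim. Following the classical augmented-test-function strategy for upwind DG methods, I would set $w = v + \delta\, w_b$ with streamline direction $w_b \coloneqq \phi_b\, \btilde_h \cdot \nabla v$ and a parameter $\delta \sim c_0 \tau_c$. Note that $w_b \in V_h$, since $\btilde_h$ is element-wise constant and $\nabla v$ is element-wise polynomial, and that $c_0\tau_c \lesssim 1$ because $c_0 \leqslant \|c\|_{0,\infty,\Gamma} + \tfrac12\|\nablas\cdot b\|_{0,\infty,\Gamma} \lesssim \tau_c^{-1}$ by~\eqref{eq:cb_cond} and~\eqref{eq:def-t_c}; this smallness is what ultimately keeps the test function bounded in the denominator.

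The heart of the argument is to show that the streamline contribution $A_h(v, w_b)$ recovers the missing part $\|\phi_b^{\onehalf} b_h \cdot \nablash v\|_{\mcK_h}^2$ of the norm $\tn v \tnsdh^2 = \tn v \tnuph^2 + \|\phi_b^{\onehalf} b_h \cdot \nablash v\|_{\mcK_h}^2$, up to terms controlled by the upwind norm. To this end I would isolate the advection term $(b_h \cdot \nablash v, w_b)_{\mcK_h} = \phi_b(b_h \cdot \nabla v, \btilde_h \cdot \nabla v)_{\mcK_h}$, where $b_h \cdot \nablash v = b_h \cdot \nabla v$ since $b_h$ is tangential. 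Writing $\btilde_h \cdot \nabla v = b_h \cdot \nablash v + (\btilde_h - b_h)\cdot \nabla v + (\btilde_h \cdot \nsh)(\nsh \cdot \nabla v)$ splits this into the desired square $\|\phi_b^{\onehalf} b_h \cdot \nablash v\|_{\mcK_h}^2$, a velocity-switching error, and a normal-gradient contamination term. The first error is absorbed through Cauchy--Schwarz and Lemma~\ref{lem:bh-b_est_upw} (after passing from $\mcK_h$ to $\mcT_h$ via the inverse estimate~\eqref{eq:inversegamma}), while the second is small because $\btilde_h \cdot \nsh = (\btilde_h - \Psh b^e)\cdot \nsh = O(h)$ and the normal gradient $\nsh \cdot \nabla v$ is precisely what the stabilization $s_h$ controls. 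The remaining pieces of $A_h(v, w_b)$ --- the reaction term, the two edge terms, and the stabilization cross-term $s_h(v, w_b)$ --- would each be bounded, using the inverse/trace inequalities of Section~\ref{sec:norms-etc}, Corollary~\ref{cor:sh-stab-norm-est}, and the resolution assumption $\tau_c^{-1}\phi_b \leqslant 1$, by $\epsilon\|\phi_b^{\onehalf} b_h \cdot \nablash v\|_{\mcK_h}^2 + C\tn v \tnuph^2$ for an arbitrarily small $\epsilon$. Combined with the coercivity estimate $A_h(v,v) \gtrsim c_0\tau_c \tn v \tnuph^2$ from Lemma~\ref{lem:upwindstability}, this gives
\begin{align}
    A_h(v, v + \delta w_b) \gtrsim (c_0\tau_c - \delta C)\tn v \tnuph^2 + \delta\bigl(\tfrac12 - \epsilon\bigr)\|\phi_b^{\onehalf} b_h \cdot \nablash v\|_{\mcK_h}^2,
\end{align}
and choosing $\delta = c_0\tau_c/(2C)$ together with $\epsilon$ small makes both coefficients $\sim c_0\tau_c$, so that $A_h(v, w) \gtrsim c_0\tau_c \tn v \tnsdh^2$.

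For the denominator I would verify $\tn w_b \tnsdh \lesssim \tn v \tnsdh$. Since $w_b$ carries one derivative of $v$ but the factor $\phi_b^{\onehalf} \sim h^{\onehalf} b_\infty^{-\onehalf}$, each term of $\tn w_b \tnsdh^2$ --- the rescaled $L^2$ part, the edge jump part, the streamline derivative, and the stabilization semi-norm $|w_b|_{s_h}$ --- can be traded for volume norms through the inverse estimates~\eqref{eq:inverse}--\eqref{eq:inverseE} and then bounded by $\tfrac1h\|\phi_b^{\onehalf}\btilde_h\cdot\nabla v\|_{\mcT_h}^2 \lesssim \tn v \tnsdh^2$ via Corollary~\ref{cor:sh-stab-norm-est}, repeatedly using $\tau_c^{-1}\phi_b \leqslant 1$. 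Hence $\tn w \tnsdh \leqslant \tn v \tnsdh + \delta \tn w_b \tnsdh \lesssim (1 + c_0\tau_c)\tn v \tnsdh \lesssim \tn v \tnsdh$, where the last step again uses $c_0\tau_c \lesssim 1$. Dividing the numerator estimate by this bound yields the asserted inf-sup inequality.

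I expect the main obstacle to be the careful bookkeeping of the error terms in $A_h(v, w_b)$, and in particular ensuring that the normal-gradient contamination produced by using the full gradient $\nabla v$ together with the only approximately tangential field $\btilde_h$ is genuinely absorbed by the normal-gradient part of $s_h$ rather than by the streamline square itself. Keeping the constant in front of $\tn v \tnuph^2$ independent of $\epsilon$ small enough that the choice $\delta \sim c_0\tau_c$ leaves a net positive multiple of $\tn v \tnsdh^2$ is the delicate point, and it is exactly here that the smallness $c_0\tau_c \lesssim 1$ and the resolution assumption $\tau_c^{-1}\phi_b \leqslant 1$ are indispensable.
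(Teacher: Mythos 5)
Your proposal is correct and follows essentially the same route as the paper's proof: the same augmented test function $v + \delta\, \phi_b \btilde_h \cdot \nabla v$, the same use of the $L^2$-coercivity of $A_h$ (Lemma~\ref{lem:upwindstability}) for the upwind part, the velocity switch $\btilde_h \leftrightarrow b_h$ controlled via Lemma~\ref{lem:bh-b_est_upw}, the stability bound for the streamline test function obtained by reducing everything to $\tfrac{1}{h}\|\phi_b^{\onehalf}\btilde_h\cdot\nabla v\|_{\mcT_h}^2$ and invoking Corollary~\ref{cor:sh-stab-norm-est}, and the final scaled-Young combination with $\delta \sim c_0\tau_c$. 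One small correction: since $b_h$ is tangential, $b_h\cdot\nabla v = b_h\cdot\nablash v$ exactly, so the exact splitting is $\btilde_h\cdot\nabla v = b_h\cdot\nablash v + (\btilde_h - b_h)\cdot\nabla v$ --- your three-term decomposition double counts the normal component (writing $(\btilde_h - b_h)\cdot\nablash v$ in the middle term would fix it), and the paper in fact absorbs the entire difference, normal part included, into the upwind norm via Lemma~\ref{lem:bh-b_est_upw} rather than invoking the normal-gradient part of $s_h$ at that point.
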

\begin{proof}
The statement is clearly true if given $v \in V_h \setminus \{0\}$ we can construct a function $w \in V_h$ such that
\begin{align}
    c_0 \tau_c \tn v \tnsdh \tn w \tnsdh \lesssim A_h(v, w).
\end{align}
The construction will be performed in three steps.

\textbf{Step 1:}
First, we choose $w_1 = v$. Then, by Lemma~\ref{lem:upwindstability},
\begin{align}
    c_0 \tau_c \tn v \tnuph \tn w_1 \tnuph \lesssim A_h(v, w_1).
    \label{eq:inf-sup-proof-step-1}
\end{align}

\textbf{Step 2:} 
Next, we set $w_2 = \phi_b \widetilde{b}_h \cdot \nabla v$ which is a
permissible test function since $\widetilde{b}_h \in \PPdczero$.
Inserting $w_2$ into $A_h$ and adding $\bhnablash v - \bhnabla v = 0$
to the convection term
gives
\begin{align}
    A_h(v, w_2) 
    &= \| \phi_b^{\onehalf} b_h \cdot \nablash v\|_{\mcK_h}^2
    + \phi_b (b_h \cdot \nablash v, \bigl( (\btilde_h - b_h)\cdot \nabla v\bigr)_{\mcK_h} 
    \\
    &\quad +(cv, w_2)_{\mcK_h}
    -(\{b_h ; n_E\} [v], \{w_2\})_{\mcE_h}\nonumber
    + \frac{1}{2}(|\{b_h ; n_E\}|[v], [w_2])_{\mcE_h}
    + s_h(v, w_2) \nonumber
    \\
    &= 
    \| \phi_b^{\onehalf} b_h \cdot \nablash v\|_{\mcK_h}^2
    + I + II + III + IV + V.
    \label{eq:Ah_w2}
\end{align}
Regarding the term $I$,
a successive application of the Cauchy--Schwarz inequality, inverse estimate~\eqref{eq:inversegamma}, and Lemma~\ref{lem:bh-b_est_upw} 
gives us
\begin{align}
    I
    &\lesssim
    \| \phi_b^{\onehalf} b_h \cdot \nablash v\|_{\mcK_h}  
    \cdot
    \biggl(\dfrac{\phi_b}{h}\biggr)^{\onehalf}\| \btilde_h - b_h \|_{0,\infty, \mcT_h} \|\nabla v \|_{\mcT_h}
    \\
    &\lesssim
    \| \phi_b^{\onehalf} b_h \cdot \nablash v\|_{\mcK_h} \tn v \tnuph
    \lesssim
    \tn v \tnuph \tn v \tnsdh.
    \label{eq:est_Ia}
\end{align}
Turning to the remaining terms $II$--$V$ in~\eqref{eq:Ah_w2},
let us assume for the moment that $w_2$ satisfies the stability estimate
\begin{align}
    \tn w_2 \tnsdasth \lesssim \tn v \tnsdh.
    \label{eq:stab_est_w2}
\end{align}
Then it is easy to see that 
\begin{align}
    |II| + |III| + |IV| + |V|
    \lesssim
    \tn v \tnuph \tn w_1 \tnsdasth 
    \lesssim
    \tn v \tnuph \tn v \tnsdh.
\end{align}
With the derived estimates for $|I|$ to $|V|$ at our disposal,
we now see that the identity 
$\tn v \tnsdh^2 =
\| \phi_b^{\onehalf} b_h \cdot \nablash v\|_{\mcK_h}^2  
+ \tn v \tnuph^2$
together with a scaled Young inequality of the form $ab \leqslant
\delta a^2 + \tfrac{1}{4 \delta} b^2$ leads to 
\begin{align}
    A_h(v, w_2)
    &= 
    \| \phi_b^{\onehalf} b_h \cdot \nablash v\|_{\mcK_h}^2  
    + I + II + III + IV + V 
    \\
    &\gtrsim 
    \| \phi_b^{\onehalf} b_h \cdot \nablash v\|_{\mcK_h}^2  
    - C \delta \tn v \tnsdh^2 - \dfrac{C}{4\delta} \tn v \tnuph^2
    \\
    &=
    (1 - C \delta)\| \phi_b^{\onehalf} b_h \cdot \nablash v\|_{\mcK_h}^2  
    - ( C \delta  + \dfrac{C}{4\delta}) \tn v \tnuph^2.
    \\
    &=
    \dfrac{1}{2}\| \phi_b^{\onehalf} b_h \cdot \nablash v\|_{\mcK_h}^2  
    - \dfrac{1+C^2}{2} \tn v \tnuph^2,
    \label{eq:inf-sup-proof-step-2}
\end{align}
where in the last step, we picked $\delta = \tfrac{1}{2C}$ with $C$ being some positive constant.

\textbf{Step 3:}
Finally, a suitable $w_3$ can be constructed by setting
$w_3 = w_1 + \epsilon c_0 \tau_c w_2$. 
The stability estimate~\eqref{eq:stab_est_w2} ensures that 
$
\tn w_3\tnsdh \lesssim \tn v \tnsdh
+ \epsilon c_0 \tau_c \tn v \tnsdh \leqslant (1+\epsilon)\tn v \tnsdh
$
and thanks to~\eqref{eq:inf-sup-proof-step-1} and~\eqref{eq:inf-sup-proof-step-2},
we conclude that $w_3$ satisfies
\begin{align}
  A_h(v, w_3)
  &\geqslant
  (1- \epsilon \widetilde{C})c_0 \tau_c   \tn v \tnuph^2
  + \dfrac{\epsilon}{2}c_0 \tau_c \| \phi_b^{\onehalf} \bnabla v \|_{\Omega}^2
  \nonumber
  \\
  &\gtrsim c_0\tau_c \tn v \tnsdh^2
  \gtrsim c_0 \tau_c \tn v \tnsdh \tn w_3 \tnsdh
\end{align}
for some constant $\widetilde{C}$ and $\epsilon > 0$ small enough.
To complete the proof, we only need to show that the stability estimate~\eqref{eq:stab_est_w2} holds.

{\bf Estimate~\eqref{eq:stab_est_w2}.}
Unwinding the definition of
 $\tn \cdot \tnsdasth$, cf.~\eqref{eq:sdhast-def},
leaves us with the following~5 terms to estimate,
\begin{align}
    \tn w_2 \tnsdast^2 
    &= \tau_c^{-1} \|w_2\|_{\mcK_h}^2 
    + \|\phi_b^{\onehalf} b_h \cdot \nablash w_2\|_{\mcK_h}^2
    + b_{\infty} \|w_2\|_{\partial \mcK_h}^2 
    + \phi_b^{-1}\| w_2\|_{\mcK_h}^2
    + |w_2|_{s_h}^2
\\
&=
I + II + III + IV + V.
\end{align}
To apply Lemma~\ref{cor:sh-stab-norm-est},
we will now show that each of the contributions $I$--$V$ can be bounded
by $\tfrac{1}{h}\|\phi_b^{\onehalf} \btildenabla v \|_{\mcT_h}^2$.
We start with the first term, where the inverse estimate~\eqref{eq:inversegamma}
immediately implies that
\begin{align}
    I 
    &= 
    \underbrace{\tau_c^{-1} \phi_b}_{\leqslant 1} \|\phi_b^{\onehalf} \btildenabla v \|_{\mcK_h}^2
    \lesssim
    \dfrac{1}{h}\|\phi_b^{\onehalf} \btildenabla v \|_{\mcT_h}^2.
\end{align}
For the second term, a combination of the two inverse
estimates~\eqref{eq:inverseE} and~\eqref{eq:inverseandtrace} together with the
definition~\eqref{eq:def-phi_b} of $\phi_b$ and the stability estimate~\eqref{eq:bhassumptions} for
$\btilde_h$ leads to 
\begin{align}
    II 
    &= \phi_b \| \bhnabla (\phi_b \btildenabla v) \|_{\mcK_h}^2
    \lesssim
    \phi_b \dfrac{b_{\infty}^2}{h^2} 
    \dfrac{1}{h} \| \phi_b \btildenabla v \|_{\mcT_h}^2
    =
    \underbrace{\phi_b^2 \dfrac{b_{\infty}^2}{h^2}}_{=1} 
    \dfrac{1}{h} \| \phi_b^{\onehalf} \btildenabla v \|_{\mcT_h}^2.
\end{align}
Turning to the third term, a successive application of~\eqref{eq:inverseE} and~\eqref{eq:inverseandtrace}
allows us again to pass from $\mcE_h$ to $\mcT_h$, yielding 
\begin{align}
    III 
    &= 
    b_{\infty} \| \phi_b \btildenabla v \|_{\partial \mcK_h}^2
    \lesssim
    \Bigl(
        \underbrace{\dfrac{b_{\infty}}{h} \phi_b}_{= 1} \Bigr) 
    \dfrac{1}{h} \| \phi_b^{\onehalf} \btildenabla v \|_{\mcT_h}^2.
\end{align}
Next, $IV$ can be handled easily using~\eqref{eq:inversegamma},
\begin{align}
    IV 
    &= \phi_b^{-1} \| \phi_b \btildenabla v \|_{\mcK_h}^2
    \lesssim
    \dfrac{1}{h} \| \phi_b^{\onehalf} \btildenabla v \|_{\mcT_h}^2.
\end{align}
Recalling the definition 
of $s_h$ given in ~\eqref{eq:sh-def}, we see that
the remaining term $V$ is composed of three terms,
\begin{align}
    V
    &= \dfrac{\gamma_0 b_{\infty}}{h}\|[w_2]\|_{\mcF_h}^2
    + \gamma_1 h b_{\infty} \|[n_F \cdot \nabla w_2]\|_{\mcF_h}^2
    + \gamma_n b_{\infty}
    \|n_{\Gamma_h} \cdot \nabla w_2\|_{\mcT_h}^2
    \\
    &=
    V_a + V_b + V_c.
\end{align}
All three contributions to $V$ can be treated very similarly.
Using $\eqref{eq:inverseandtrace}$ once more, we
obtain for $V_a$ the bound
\begin{align}
    V_a 
    &= \dfrac{\gamma_0 b_{\infty}}{h}\|[\phi_b \btildenabla v]\|_{\mcF_h}^2
    \lesssim
    \underbrace{\dfrac{\gamma_0 b_{\infty}}{h} \phi_b
    }_{\lesssim 1}
    \dfrac{1}{h} \|\phi_b^{\onehalf} \btildenabla v\|_{\mcT_h}^2.
\end{align}
The term $V_b$ can be treated similarly, but involves an 
additional application of the inverse estimate~\eqref{eq:inverse} which leads to
\begin{align}
    V_b \lesssim \dfrac{1}{h} \|\phi_b^{\onehalf} \btildenabla v\|_{\mcT_h}^2.
\end{align}
Finally,
\begin{align}
    V_c 
    &= 
    \gamma_n b_{\infty}
    \|n_{\Gamma_h} \cdot \nabla ( \phi_b \btildenabla v)\|_{\mcT_h}^2
    \lesssim
    \underbrace{
    \gamma_n b_{\infty}
    \dfrac{\phi_b}{h}}_{\lesssim 1}
    \dfrac{1}{h} \|\phi_b^{\onehalf} \btildenabla v\|_{\mcT_h}^2.
\end{align}
Collecting all bounds together with
Lemma~\ref{cor:sh-stab-norm-est} implies that
\begin{align}
I + \cdots  + IV \lesssim
\dfrac{1}{h}\|\phi_b^{\onehalf} \btildenabla v \|_{\mcT_h}^2
\lesssim \tn v \tnsdh^2
\end{align}
which concludes the proof of the stability estimate~\eqref{eq:stab_est_w2}.
\qed\end{proof}

\section{A priori error estimate}
\label{sec:aprioriest}
\subsection{A Strang-type lemma}
\label{ssec:erroradectionreaction}
As typical in the theoretical analysis of surface PDE discretizations,
the derivation of a priori estimates for the proposed CutDG method
departs from a Strang-type lemma,
which shows that the total discretization error 
is composed of an approximation error, a consistency error, and a
geometric error contribution.
\begin{lemma}
    For $s \geqslant 1$ let $u\in H^s(\Gamma)$  be the solution to the
    advection-reaction problem~\eqref{eq:continuous_problem}. Then the
    solution $u_h \in V_h$ to the discrete problem~\eqref{eq:Ah-def}
    satisfies the error estimate 
\begin{align}
    c_0 \tau_c \tn u^e - u_h \tnsd 
    &\lesssim 
    \inf_{v \in V_h} 
    \bigl( 
        \tn u^e - v \tnsdast 
    + |v|_{s_h}
    \bigr)
    \label{eq:strang_ar}
    \\
    &+ \sup_{w \in V_h \setminus \{0\}} \frac{a_h(u^e, w) - a_h(u, w^l)}{\tn w \tnsdh}
    + \sup_{w \in V_h \setminus \{0\}} \frac{l_h(w) - l(w^l)}{\tn w \tnsdh}.\nonumber
\end{align}
\label{lem:strang_ar}
\end{lemma}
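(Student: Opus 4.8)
The plan is to run the classical Strang-type argument, using the discrete inf-sup condition of Theorem~\ref{thm:infsup_ar} as the engine and the boundedness estimate~\eqref{eq:A_h_boundedness} together with Cauchy--Schwarz in $s_h$ to control the residual. First I would fix an arbitrary $v \in V_h$ and split the error by the triangle inequality, $\tn u^e - u_h \tnsd \leqslant \tn u^e - v \tnsd + \tn v - u_h \tnsd$. The first contribution is an interpolation error that will ultimately be absorbed into the infimum, while the second is a purely discrete quantity to which the inf-sup condition applies, since $v - u_h \in V_h$.

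For the discrete part I would apply Theorem~\ref{thm:infsup_ar} to $v - u_h$ and rewrite the numerator $A_h(v - u_h, w)$. Using the discrete equation~\eqref{eq:Ah-def}, $A_h(u_h, w) = l_h(w)$, so $A_h(v - u_h, w) = A_h(v, w) - l_h(w) = a_h(v,w) + s_h(v,w) - l_h(w)$. I then insert $\pm a_h(u^e, w)$ and the continuous Galerkin orthogonality $a(u, w^l) = l(w^l)$, which holds because $w^l$ is an admissible test function in $V$ and $u$ solves~\eqref{eq:bilinear_exact}. This decomposes the numerator as $a_h(v - u^e, w) + s_h(v,w) + [a_h(u^e, w) - a(u, w^l)] - [l_h(w) - l(w^l)]$. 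The first term is bounded by $\tn v - u^e \tnsdast \tn w \tnsd$ through~\eqref{eq:A_h_boundedness}; the stabilization term is handled by Cauchy--Schwarz for the positive semi-definite form $s_h$, giving $s_h(v,w) \leqslant |v|_{s_h}\,|w|_{s_h} \leqslant |v|_{s_h}\,\tn w \tnsdh$; and the two bracketed remainders are precisely the consistency and geometric functionals displayed on the right-hand side of~\eqref{eq:strang_ar}. Dividing by $\tn w \tnsdh$ (using $\tn w \tnsd \leqslant \tn w \tnsdh$ to absorb the factor left over from the boundedness estimate) and taking the supremum over $w$ yields $c_0 \tau_c \tn v - u_h \tnsdh \lesssim \tn u^e - v \tnsdast + |v|_{s_h}$ plus the two supremum terms.

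It then remains to recombine the two contributions and reconcile the various norms. Since $\tn \cdot \tnsd \leqslant \tn \cdot \tnsdh$, the discrete-error bound above controls $c_0 \tau_c \tn v - u_h \tnsd$ directly, with the inf-sup constant $c_0\tau_c$ cancelling. For the interpolation part I would use the elementary comparison $\tn \cdot \tnsd \lesssim \tn \cdot \tnsdast$ — the $L^2$ weight obeys $\tau_c^{-1} \leqslant \phi_b^{-1}$ by~\eqref{eq:mesh-resol-assump}, and the edge-jump term of $\tn\cdot\tnsd$ is dominated by the boundary term of $\tn\cdot\tnsdast$ after a trace estimate — combined with $c_0\tau_c \lesssim 1$, which follows from $c_0 \leqslant \|c - \tfrac12 \nablas\cdot b\|_{0,\infty,\Gamma} \lesssim \tau_c^{-1}$. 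This gives $c_0 \tau_c \tn u^e - v \tnsd \lesssim \tn u^e - v \tnsdast$. Adding the two pieces and taking the infimum over $v \in V_h$ (the consistency and geometric suprema do not depend on $v$ and remain outside) reproduces exactly~\eqref{eq:strang_ar}.

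Since this is essentially an abstract bookkeeping lemma, I do not expect a deep analytical obstacle. The two places that require care are: (i) organizing the added and subtracted terms so that the leftover is exactly the stated consistency and geometric residuals — in particular invoking the \emph{continuous} orthogonality $a(u,w^l)=l(w^l)$ rather than any discrete identity; and (ii) the norm bookkeeping, namely verifying $\tn\cdot\tnsd \lesssim \tn\cdot\tnsdast$ and $\tn\cdot\tnsd \leqslant \tn\cdot\tnsdh$, confirming the boundedness factor $\tn w\tnsd$ is dominated by the denominator $\tn w\tnsdh$, and checking $c_0\tau_c \lesssim 1$ so that all prefactors match. A minor point is that $v - u^e$ must lie in the admissible class $H^1(\Gamma) \oplus V_h$ of~\eqref{eq:A_h_boundedness}, which holds because $u \in H^s(\Gamma) \subset H^1(\Gamma)$ with $s \geqslant 1$.
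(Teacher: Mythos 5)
Your proposal is correct and follows essentially the same route as the paper's own proof: split the error into an approximation part and a discrete part $v - u_h$, apply the inf-sup condition of Theorem~\ref{thm:infsup_ar} to the latter, rewrite $A_h(v-u_h,w)$ using the discrete equation, insert $\pm a_h(u^e,w)$ together with the continuous identity $a(u,w^l)=l(w^l)$, and bound the pieces via~\eqref{eq:A_h_boundedness} and Cauchy--Schwarz for $s_h$. The only difference is that you spell out the norm bookkeeping ($\tn\cdot\tnsd \lesssim \tn\cdot\tnsdast$, $\tn w\tnsd \leqslant \tn w\tnsdh$, and $c_0\tau_c \lesssim 1$) which the paper leaves implicit; this is a welcome clarification, not a deviation.
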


\begin{proof}
First, we split the discretization error $u^e-u_h$ into an approximation
error $e_\pi = v - u^e$ and a discrete error $e_h = v - u_h$ and
obtain
$
c_0 \tau_c \tn u^e - u_h \tnsd 
    \lesssim \tn e_\pi \tnsd + c_0 \tau_c \tn e_h \tnsdh,
$
recalling that $c_0 \tau_c \lesssim 1$ by the definition of $\tau_c$.
To estimate the error contribution $\tn e_h \tnsdh$ further,
we want to invoke the inf-sup condition established in Theorem~\ref{thm:infsup_ar}.
First, observe that 
\begin{align}
    A_h(e_h, w) 
    &= a_h(v, w) - l_h(w) + s_h(v, w) 
    \\
    &= a_h(v -u^e, w) + a_h(u^e, w) - l_h(w) + s_h(v, w) 
    \\
    &= a_h(v -u^e, w) 
    + a_h(u^e, w) 
    - \bigl(a(u, w^l) - l(w^l)\bigr) 
    - l_h(w) + s_h(v, w) 
    \\
    &\leqslant 
    \tn v -u^e \tnsdast \tn w \tnsdh
    + \bigl(a_h(u^e, w) - a(u, w^l)\bigr) 
    + \bigl(l(w^l) - l_h(w)\bigr) + |v|_{s_h} |w|_{s_h} 
    \label{eq:error-split}
\end{align}
where we successively employed~\eqref{eq:Ah-def}, \eqref{eq:bilinear_exact}, and finally, \eqref{eq:A_h_boundedness}.
Inserting~\eqref{eq:error-split}
into the inf-sup condition
\begin{align}
    c_0 \tau_c \tn e_h \tnsdh &\lesssim \sup_{w \in V_h \setminus \{0\}} \frac{A_h(e_h, w)}{\tn w \tnsdh}.
    \label{eq:apriori_1}
\end{align}
yields the desired estimate.
\qed\end{proof}
In the remaining subsections, we will establish concrete estimates
for the approximation, consistency, and geometric error contributions.

\subsection{Approximation and consistency error estimates}
Next, we bound the approximation error for the quasi-interpolation
operator $\pi_h: L^2(\Gamma)  \to V_h$ constructed in Section~\ref{ssec:quasi-interpolation}.
\begin{lemma}
    Let $V_h = \PPdck$ and assume that $v \in H^s(\Gamma)$ with $s \geqslant 2$.
    Set $r = \min\{s, k+1\}$.
Then $\pi_h v$ satisfies the error estimate
\begin{align}
\tn v^e - \pi_h v \tnsdast \lesssim b_{\infty} h^{r - 1/2} \| v \|_{r, \Gamma}.
\label{eq:proj_estimates}
\end{align}
\label{lem:proj_estimates}
\end{lemma}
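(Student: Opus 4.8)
The plan is to write $e \coloneqq v^e - \pi_h v = v^e - \pi_h^* v^e$ and bound each of the three contributions in the definition~\eqref{eq:sdhast-def} of $\tn\cdot\tnsdast$ separately, namely the rescaled surface term $\phi_b^{-1}\|e\|_{\mcK_h}^2$, the streamline term $\|\phi_b^{\onehalf} b_h\cdot\nablash e\|_{\mcK_h}^2$, and the edge term $b_{\infty}\|e\|_{\partial\mcK_h}^2$. The guiding idea throughout is to transfer each surface- or edge-based integral to full element-volume integrals of $e$ and its derivatives by means of the geometrically robust trace inequalities~\eqref{eq:tracegamma} and~\eqref{eq:traceF}, then to insert the $L^2$-projection estimates~\eqref{eq:L2T}--\eqref{eq:L2F} for $e$, and finally to convert the resulting bulk seminorm $|v^e|_{r,\mcN_h}$ back to the surface seminorm $\|v\|_{r,\Gamma}$ using the extension stability~\eqref{eq:extensionstability} with $\delta\sim h$, which is the mechanism that produces the extra half power of $h$.

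Concretely, for the first term I would apply~\eqref{eq:tracegamma} elementwise to obtain $\|e\|_{\Gammah\cap T}\lesssim h^{-\onehalf}\|e\|_T + h^{\onehalf}\|\nabla e\|_T$ and then bound $\|e\|_T$ and $\|\nabla e\|_T$ by $h^r|v^e|_{r,T}$ and $h^{r-1}|v^e|_{r,T}$ via~\eqref{eq:L2T}, yielding $\|e\|_{\Gammah\cap T}\lesssim h^{r-\onehalf}|v^e|_{r,T}$. For the streamline term I would use $|b_h\cdot\nablash e|\lesssim b_{\infty}|\nabla e|$ and repeat the argument one derivative higher, i.e.\ apply~\eqref{eq:tracegamma} to $\nabla e$ and~\eqref{eq:L2T} with $k=1,2$, which is legitimate since $s\geqslant 2$ forces $r\geqslant 2$; this gives $\|\nabla e\|_{\Gammah\cap T}\lesssim h^{r-\threehalf}|v^e|_{r,T}$. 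The surface pieces in $\mcK_h$ lying on interior faces $F$ (the second set in~\eqref{def:K_h}) are treated identically after replacing~\eqref{eq:tracegamma} by the elementary face trace estimate.

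The term I expect to be the main obstacle is the edge contribution $b_{\infty}\|e\|_{\partial\mcK_h}^2$, since the edges $E$ are codimension-two objects that cut the faces $F$ in an arbitrary fashion. Here the plan is to chain the two trace inequalities: first~\eqref{eq:traceF} to pass from the edge $E\cap F$ to the face $F$, and then the element trace inequality to pass from $F$ to $T$. Each step raises the differentiation order by one, so the resulting bound $\|e\|_{E\cap F}\lesssim h^{-1}\|e\|_T + \|\nabla e\|_T + h\|\nabla^2 e\|_T$ involves second derivatives of $e$ and again relies on $r\geqslant 2$; inserting~\eqref{eq:L2T} gives $\|e\|_{E\cap F}\lesssim h^{r-1}|v^e|_{r,T}$. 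The crucial point, and the reason the argument is geometrically robust, is that both~\eqref{eq:traceF} and the element trace inequality hold with constants independent of how $\Gammah$, and hence $E$, intersects the mesh, so that no smallness of $|K|_{d-1}$ or $|E|_{d-2}$ can enter the constants.

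Finally I would collect the three elementwise bounds, sum over the active mesh $\mcT_h$, and apply $|v^e|_{r,\mcN_h}^2\lesssim h\,\|v\|_{r,\Gamma}^2$ from~\eqref{eq:extensionstability}. The only genuine bookkeeping is then to combine the trace weights $h^{-1}$, $h^{-\onehalf}$ with the prefactors $\phi_b^{-1}=b_{\infty}/h$, $\phi_b\, b_{\infty}^2 = h\,b_{\infty}$, and $b_{\infty}$ attached to the three terms, so that all contributions carry the common power $h^{2r-1}$ and the stated velocity scaling; taking the square root then gives~\eqref{eq:proj_estimates}.
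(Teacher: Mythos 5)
Your proposal is correct and follows essentially the same route as the paper's proof: split $\tn \cdot \tnsdast$ into its constituent terms, pass from surface and edge integrals to element-volume integrals via the geometrically robust trace inequalities, insert the $L^2$-projection estimates, and recover $\|v\|_{r,\Gamma}$ through the extension stability \eqref{eq:extensionstability} with $\delta \sim h$, which supplies the extra half power of $h$. The only cosmetic difference is the edge term, where you chain two trace inequalities with \eqref{eq:L2T} while the paper applies \eqref{eq:traceF} together with the face estimate \eqref{eq:L2F} --- the two are equivalent in substance.
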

\begin{proof}
Setting $e_{\pi} = v - \pi_h v$ and unwinding the definition of $\tn \cdot
\tnsdast$ given in \eqref{eq:sdhast-def} leaves us with 5 terms to estimate,
\begin{align}
    \tn e_\pi \tnsdast^2 
    &=
    \phi_b^{-1}\| e_{\pi} \|_{\mcK_h}^2
    + \tau_c^{-1} \|e_{\pi}\|_{\mcK_h}^2 
    + \|\phi_b^{\onehalf} b_h \cdot \nablash  e_{\pi} \|_{\mcK_h}^2
    + b_{\infty} \|e_{\pi}\|_{\partial \mcK_h}^2 
    \\
    &=
    I + II + III + IV.
\end{align}
The first term can be simply estimated by combining the trace
inequality~\eqref{eq:traceF} with standard interpolation
estimate~\eqref{eq:L2T}, followed by a final application of the
co-area formula~\eqref{eq:extensionstability} with $\delta \sim h$,
leading to
\begin{align}
    I 
    &= b_{\infty} h^{-1} \| e_{\pi} \|_{\mcK_h}^2
    \lesssim  b_{\infty} h^{-1} 
    \bigl( h^{-1} \| e_{\pi} \|_{\mcT_h}^2 + h \| \nabla e_{\pi} \|_{\mcT_h}^2  \bigr)
    \\
    &\lesssim  b_{\infty} h^{2r-2} \| v^e\|_{r, \mcT_h}^2
    \lesssim  b_{\infty} h^{2r-2} \| v^e\|_{r, U_{\delta_h}(\Gamma)}^2
    \lesssim  b_{\infty} h^{2r-1} \| v\|_{r, \Gamma}^2.
\end{align}
The second and third terms  can be estimated in a similar fashion,
\begin{align}
    II 
    &= \underbrace{(\tau_c^{-1} \phi_b)}_{\leqslant 1} \phi_b^{-1} \| e_{\pi} \|_{\mcK_h}^2 
    \lesssim I 
    \lesssim  b_{\infty} h^{2r-1} \| v\|_{r, \Gamma}^2,
    \\
    III 
    &\lesssim 
     b_{\infty} h \| \nabla e_{\pi} \|_{\mcK_h}^2
    \lesssim
     b_{\infty} h \bigl( h^{-1} \| \nabla e_{\pi} \|_{\mcT_h}^2 + h \| \nabla \otimes \nabla e_{\pi} \|_{\mcT_h}^2  \bigr)
    \\
    &\lesssim  b_{\infty} h^{2r-2} \| v^e\|_{r, \mcT_h}^2
    \lesssim  b_{\infty} h^{2r-1} \| v\|_{r, \Gamma}^2.
\end{align}
Using the interpolation estimate~\eqref{eq:L2F} instead of~\eqref{eq:L2T}, also
the remaining term $IV$ can be treated similarly, 
\begin{align}
    IV 
    \lesssim  
    b_{\infty} \bigl( h^{-1} \| e_{\pi} \|_{\mcF_h}^2 + h \| \nabla e_{\pi} \|_{\mcF_h}^2  \bigr)
    \lesssim b_{\infty}  h^{2r-2} \| e_{\pi} \|_{r, \mcT_h}^2 
    \lesssim  b_{\infty} h^{2r-1} \| v\|_{r, \Gamma}^2.
\end{align}
\qed\end{proof}

\begin{lemma}
\label{lem:consistency_error}
Under the same assumptions as in Lemma~\ref{lem:proj_estimates},
the consistency error $|\pi_h v |_{s_h} $ can be bounded by
\begin{align}
|\pi_h v |_{s_h} 
\lesssim b_{\infty} ( 
     h^{r - \onehalf}+ h^{k_g + \onehalf}
     )\| v \|_{r, \Gamma}.
\label{eq:consistency_error}
\end{align}
\end{lemma}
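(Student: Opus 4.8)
The plan is to expand $|\pi_h v|_{s_h}^2$ into the three contributions dictated by the definition \eqref{eq:sh-def} of $s_h$, namely the value-jump term $\tfrac{\gamma_0 b_\infty}{h}\|[\pi_h v]\|_{\mcF_h}^2$, the normal-gradient-jump term $\gamma_1 b_\infty h\|[n_F\cdot\nabla\pi_h v]\|_{\mcF_h}^2$, and the volume normal-gradient term $\gamma_n b_\infty\|n_{\Gamma_h}\cdot\nabla\pi_h v\|_{\mcT_h}^2$, and to bound each separately. Throughout I write $e_\pi = v^e - \pi_h v = v^e - \pi_h^* v^e$. The crucial structural observation is that, since $v\in H^s(\Gamma)$ with $s\geq 2$, its extension $v^e$ is globally $H^2$ on $U_\delta(\Gamma)$, so that $[v^e]=0$ and $[n_F\cdot\nabla v^e]=0$ across every face; consequently in the two jump terms one may replace $\pi_h v$ by $-e_\pi$ and reduce everything to interpolation errors. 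I expect the two face terms, together with the ``interpolation part'' of the volume term, to produce the $h^{r-\onehalf}$ contribution, whereas the genuinely new $h^{k_g+\onehalf}$ contribution will arise solely from the volume normal-gradient term acting on $v^e$ itself.

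For the two face terms I would bound the one-sided traces of $e_\pi$ and $\nabla e_\pi$ on each $F\subset\partial T$ by the element trace inequality $\|\cdot\|_{\partial T}\lesssim h^{-\onehalf}\|\cdot\|_T + h^{\onehalf}\|\nabla\cdot\|_T$ and then invoke the $L^2$-projection estimate \eqref{eq:L2T} with Sobolev orders up to two (which is where $r\geq 2$ is used). This yields $\|e_\pi\|_F\lesssim h^{r-\onehalf}|v^e|_{r,T}$ and $\|\nabla e_\pi\|_F\lesssim h^{r-3/2}|v^e|_{r,T}$; after multiplying by the prefactors $\tfrac{b_\infty}{h}$ and $b_\infty h$ respectively, both face terms are bounded by $b_\infty h^{2r-2}|v^e|_{r,\mcT_h}^2$, and a final application of the co-area/extension estimate \eqref{eq:extensionstability} with $\delta\sim h$ converts $|v^e|_{r,\mcT_h}^2$ into $h\,\|v\|_{r,\Gamma}^2$, giving the desired $b_\infty h^{2r-1}\|v\|_{r,\Gamma}^2$.

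The volume term is the heart of the estimate, and I would split $n_{\Gamma_h}\cdot\nabla\pi_h v = -\,n_{\Gamma_h}\cdot\nabla e_\pi + n_{\Gamma_h}\cdot\nabla v^e$. The first summand is an interpolation error controlled by $\|\nabla e_\pi\|_{\mcT_h}\lesssim h^{r-1}|v^e|_{r,\mcT_h}$ via \eqref{eq:L2T}, which after \eqref{eq:extensionstability} again contributes at the $h^{r-\onehalf}$ level. The second summand is where the geometric consistency error enters: since the closest-point extension is constant along the exact normal lines, one has exactly $n^e\cdot\nabla v^e = 0$, so that $n_{\Gamma_h}\cdot\nabla v^e = (n_{\Gamma_h}-n^e)\cdot\nabla v^e$. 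Bounding this by $\|n_{\Gamma_h}-n^e\|_{L^\infty(\mcT_h)}\,\|\nabla v^e\|_{\mcT_h}$, using the geometry assumption \eqref{eq:discrete_surf_ass} (extended to the volume) to get $\|n_{\Gamma_h}-n^e\|_{L^\infty(\mcT_h)}\lesssim h^{k_g}$, and \eqref{eq:extensionstability} to get $\|\nabla v^e\|_{\mcT_h}\lesssim h^{\onehalf}\|v\|_{1,\Gamma}$, produces the bound $h^{k_g+\onehalf}\|v\|_{1,\Gamma}$, i.e.\ the squared contribution $b_\infty h^{2k_g+1}\|v\|_{r,\Gamma}^2$.

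Collecting the three estimates gives $|\pi_h v|_{s_h}^2\lesssim b_\infty\bigl(h^{2r-1}+h^{2k_g+1}\bigr)\|v\|_{r,\Gamma}^2$, and taking the square root yields \eqref{eq:consistency_error}. I expect the only delicate point to be the second summand of the volume term: one must recognize and exploit the exact identity $n^e\cdot\nabla v^e=0$, since a naive bound on $n_{\Gamma_h}\cdot\nabla v^e$ would be merely $O(1)$ and would destroy consistency. It is precisely the cancellation of the exact-normal component, combined with the $O(h^{k_g})$ normal-approximation quality, that yields the weak-consistency rate $h^{k_g+\onehalf}$ characteristic of normal-gradient volume stabilization.
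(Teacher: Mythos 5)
Your proof is correct and follows essentially the same route as the paper's: expand $|\pi_h v|_{s_h}^2$ into the three stabilization terms, exploit that $[v^e]=0$, $[n_F\cdot\nabla v^e]=0$, and $n_\Gamma^e\cdot\nabla v^e=0$ for the closest-point extension, and then combine interpolation estimates with the geometric bound $\|n_{\Gamma_h}-n_\Gamma^e\|_{L^\infty}\lesssim h^{k_g}$ and the co-area/extension estimate \eqref{eq:extensionstability}. Your only deviations are cosmetic: you bound the face terms via the element trace inequality plus \eqref{eq:L2T} instead of the face estimate \eqref{eq:L2F}, and in the volume term you place the factor $(n_{\Gamma_h}-n_\Gamma^e)$ on $\nabla v^e$ rather than on $\nabla\pi_h v$ as the paper does, which is an equivalent splitting yielding identical rates.
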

\begin{proof}
Since $v \in H^s(\Gamma)$ with $s \geqslant 2$, 
both 
the expressions
$\|[\pi_h v]\|_{\mcF_h}$,
$\|[n_F \cdot \nabla \pi_h v]\|_{\mcF_h}$ and
$\|[n_{\Gamma}^e \cdot \nabla \pi_h v]\|_{\mcT_h}$ vanish
for the normal extension $v^e \in H^s(U_{\delta_h}(\Gamma))$.
Consequently,
\begin{align}
	| \pi_h v |_{s_h}^2
    \nonumber
    &= \dfrac{\gamma_0 b_{\infty}}{h}\|[\pi_h v - v^e]\|_{\mcF_h}^2
    + \gamma_1 h b_{\infty} \|[n_F \cdot \nabla (\pi_h v - v^e) \|_{\mcF_h}^2
    \\
    &\quad
    + \gamma_n b_{\infty}
    \| n_{\Gamma_h} \cdot \nabla \pi_h v - n_{\Gamma}^e \cdot \nabla v^e\|_{\mcT_h}^2
    =  I + II + III.
\end{align}
Successively applying interpolation estimate~\eqref{eq:L2F} with $k = 0$ and
stability estimate~\eqref{eq:extensionstability} with $\delta \sim h$, we see
that
\begin{align}
    I 
    \lesssim  \dfrac{\gamma_0 b_{\infty}}{h} h^{2r-1}\|v^e\|_{r, \mcT_h}^2
    \lesssim  \dfrac{\gamma_0 b_{\infty}}{h} h^{2r}\|v\|_{r, \Gamma}^2,
\end{align}
and similarly,
\begin{align}
    II 
    \lesssim 
    \gamma_1 h b_{\infty} h^{2r-3}\|v^e\|_{r, \mcT_h}^2
    \lesssim 
    \gamma_1 b_{\infty} h^{2r-1}\|v^e\|_{r, \Gamma}^2.
\end{align}
To estimate the remaining term $III$, we also
need to take into account the geometrical approximation assumption~\eqref{eq:discrete_surf_ass},
yielding
\begin{align}
    (\gamma_n b_{\infty})^{-1}
    III  
    &\lesssim 
    \| (n_{\Gamma_h} - n_{\Gamma}^e) \cdot \nabla \pi_h v\|_{\mcT_h}^2
    + \| n_{\Gamma}^e \cdot (\nabla v^e - \pi_h v)\|_{\mcT_h}^2
    \\
    &\lesssim
    h^{2 k_g}\|\nabla \pi_h v\|_{\mcT_h}^2
    + h^{2r - 2 }
     \| v^e \|_{r, \mcT_h}^2
    \\
    &\lesssim
    h^{2 k_g + 1}\|v\|_{1, \Gamma}^2
    +h^{2r - 1 }
    \| v \|_{r, \Gamma}^2.
\end{align}
\qed\end{proof}

\subsection{Geometric error estimates}
\label{ssec:geom-err-est}
Finally, we estimate the remaining geometric error contributions
originating from our geometry approximation assumptions~\ref{eq:discrete_surf_ass}.
\begin{lemma}
\label{lem:geometric_error_arbilinear}
    For $u \in H^1(\Gamma)$ and $w \in V_h$ we have that
\begin{align}
    |a_h(u^e, w) - a(u, w^l)| &\lesssim \tau_c^{\onehalf} h^{k_g+1} \|u\|_{1, \Gamma} \tn w \tnsd,
    \label{eq:geometric-err-ah}
    \\
    |l(w^l) - l_h(w)| &\lesssim \tau_c^{\onehalf} h^{k_g+1} \|f\|_\Gamma \tn w \tnsd.
    \label{eq:geometric-err-lh}
\end{align}
\end{lemma}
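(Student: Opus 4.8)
The plan is to rewrite both sides on a common domain, namely the discrete surface $\Gamma_h$, and then reduce everything to the weighted coefficient estimates of Lemma~\ref{lem:coeff-est}. First I would observe that, since $u^e$ is a single-valued continuous function on $U_\delta(\Gamma)$, its edge jumps vanish, $[u^e]|_E = 0$ for all $E \in \mcE_h$, so the two edge contributions in $a_h(u^e,w)$ drop out and $a_h(u^e, w) = (c_h u^e + b_h \cdot \nablash u^e, w)_{\mcK_h}$. Next I would transport the continuous form $a(u,w^l)$ to $\Gamma_h$ using the change of variables $\mathrm{d}\Gamma = |B|\,\mathrm{d}\Gamma_h$ together with the identity $(w^l)^e = w$ on $\Gamma_h$; for the reaction part this immediately gives $(cu,w^l)_\Gamma = (|B| c^e u^e, w)_{\mcK_h}$.

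For the advection part the key step is the gradient identity~\eqref{eq:nablash-ue}, which rearranges to $(\nablas u)^e = B^{-T}\nablash u^e$ and hence $(b \cdot \nablas u)^e = b^e \cdot B^{-T}\nablash u^e = (B^{-1} b^e)\cdot\nablash u^e$. Combined with the change of variables this yields $(b\cdot\nablas u, w^l)_\Gamma = (|B| B^{-1} b^e \cdot \nablash u^e, w)_{\mcK_h}$. Subtracting, the geometric error collapses to
\begin{align*}
a_h(u^e,w) - a(u,w^l)
= \bigl((c_h - |B| c^e) u^e, w\bigr)_{\mcK_h}
+ \bigl((b_h - |B| B^{-1} b^e)\cdot\nablash u^e, w\bigr)_{\mcK_h},
\end{align*}
in which the coefficient differences are precisely the quantities controlled by Lemma~\ref{lem:coeff-est}.

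Then I would bound each term by Cauchy--Schwarz: the $L^\infty$ bounds $\||B|c^e - c_h\|_{L^\infty(\mcK_h)} \lesssim h^{k_g+1}$ and $\||B| B^{-1}b^e - b_h\|_{L^\infty(\mcK_h)} \lesssim h^{k_g+1}$ from Lemma~\ref{lem:coeff-est} pull out the factor $h^{k_g+1}$, the norm equivalences~\eqref{eq:normeql2} and~\eqref{eq:normeqh1} give $\|u^e\|_{\mcK_h} \sim \|u\|_{\Gamma}$ and $\|\nablash u^e\|_{\mcK_h} \sim \|\nablas u\|_{\Gamma}$ (using $(u^e)^l = u$), and finally the definition~\eqref{eq:upw-norm-def} of the upwind norm yields $\|w\|_{\mcK_h} \leqslant \tau_c^{\onehalf}\tn w\tnup \leqslant \tau_c^{\onehalf}\tn w\tnsd$. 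Multiplying these gives the claimed bound $\tau_c^{\onehalf} h^{k_g+1}\|u\|_{1,\Gamma}\tn w\tnsd$ in~\eqref{eq:geometric-err-ah}.

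For the linear form the same change of variables gives $l(w^l) = (f, w^l)_\Gamma = (|B| f^e, w)_{\mcK_h}$, so that $l(w^l) - l_h(w) = (|B| f^e - f_h, w)_{\mcK_h}$. To expose the factor $\|f\|_\Gamma$ I would split $|B| f^e - f_h = (|B| - 1) f^e + (f^e - f_h)$: the first piece is bounded in $L^2(\mcK_h)$ by $\|1 - |B|\|_{L^\infty(\Gamma_h)}\|f^e\|_{\mcK_h} \lesssim h^{k_g+1}\|f\|_\Gamma$ using~\eqref{eq:B_det_estimates} and the norm equivalence~\eqref{eq:normeql2}, while the second piece is controlled by the approximation assumption~\eqref{eq:fh_assumption_prel}; together with $\|w\|_{\mcK_h} \leqslant \tau_c^{\onehalf}\tn w\tnsd$ this gives~\eqref{eq:geometric-err-lh}. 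The only genuinely delicate point is the bookkeeping in the change of variables: one must carry the Jacobian weight $|B|$ and the pullback $B^{-1}$ (respectively $B^{-T}$ on gradients) through consistently so that the residual coefficient differences match exactly the weighted quantities $|B|c^e$, $|B|B^{-1}b^e$, and $|B|f^e$ appearing in Lemma~\ref{lem:coeff-est}, which is precisely why that lemma is stated with those weights.
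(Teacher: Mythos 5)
Your proposal is correct and follows essentially the same route as the paper's proof: drop the edge terms since $[u^e]=0$, pull $a(u,w^l)$ back to $\Gamma_h$ via $\mathrm{d}\Gamma = |B|\,\mathrm{d}\Gamma_h$ and the gradient identity~\eqref{eq:nablash-ue}, reduce the residual to the weighted coefficient differences of Lemma~\ref{lem:coeff-est}, and close with Cauchy--Schwarz, the norm equivalences~\eqref{eq:normeql2}--\eqref{eq:normeqh1}, and $\|w\|_{\mcK_h} \leqslant \tau_c^{\onehalf}\tn w\tnsd$. Your only deviation is in the linear-form estimate, where you split $|B|f^e - f_h = (|B|-1)f^e + (f^e - f_h)$ instead of invoking~\eqref{eq:fh_assumption} directly as the paper does; this is a harmless (in fact slightly more explicit) variant that makes the $\|f\|_\Gamma$ factor visible.
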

\begin{proof}
Recalling definition~\eqref{eq:ahI} of the discrete bilinear form $a_h$,
 find that
\begin{align}
    a_h(u, w) - a(u, w) 
    &= 
    \bigl(
        (c_h u^e, w)_{\Gamma_h} - (cu, w^l)_\Gamma
    \bigr)
    + \bigl(
     (b_h \cdot \nablash u^e, w)_{\Gamma_h} - (b \cdot \nablas u, w^l)_\Gamma
     \bigr)
    \nonumber
    \\
    &\quad -(\{b_h ; n_E\} [u^e], \{w\})_{\mcE_h}
    + \frac{1}{2}(|\{b_h ; n_E\}|[u^e], [w])_{\mcE_h}
    \nonumber\\
    &
    = I + II + III + IV.
\end{align}
Since $u^e \in H^{1}(U_{\delta}(\Gamma))$, 
we note that $[u^e] = 0$ and thus the contributions from $III$ and $IV$ vanish.
Turning to the first term $I$ and changing the integration domain from $\Gamma$ to $\Gamma_h$,
we can use assumptions
\eqref{eq:ch_assumption} and \eqref{eq:normeql2} to obtain the bound
\begin{align}
    I &= 
     (c_h u^e, w)_{\Gamma_h} - (|B| c u^e , w)_{\Gamma_h}
      = ((c_h - |B| c^e) u^e, w)_{\Gamma_h}
      \\
      &\lesssim \|c_h - |B| c^e\|_{L^\infty(\Gamma_h)} 
      \|u^e\|_{\Gamma_h} \tau_c^{\onehalf} \tau_c^{-\onehalf}\|w\|_{\Gamma_h}
      \lesssim h^{k_g+1} \|u\|_{\Gamma} \tau_c^{\onehalf} \tn w \tnsdh
\end{align}
Similarly for $II$, assumptions \eqref{eq:bh_assumption} and~\eqref{eq:normeqh1} can be employed to conclude that
\begin{align}
    II 
    &= (b_h \cdot \nablas u^e), w)_{\Gamma_h} - (|B| b^e \cdot B^{-T} \nablash u^e, w)_{\Gamma_h}
    \\
    &= ((b_h - |B|B^{-1} b^e) \cdot \nablas u^e, w)_{\Gamma_h}
    \lesssim h^{k_g+1} \|u\|_{1,\Gamma} \tau_c^{\onehalf} \tn v \tnsd.
\end{align}
Finally, estimate \eqref{eq:geometric-err-lh} can be obtained in the exact same way as the bound for $I$.
\qed\end{proof}

\subsection{A priori error estimate}
Combining the above bounds for the approximation, consistency and geometric errors
with the abstract Strang-type Lemma~\ref{lem:strang_ar},
we arrive at the final a priori error estimate.
\begin{theorem}
For $s \geqslant 2$,
let $u \in H^s(\Gamma)$ be the solution to \eqref{eq:bilinear_exact}, and let $u_h \in V_h = \PPdck$ be the discrete solution 
to \eqref{eq:Ah-def}.  With $r = \min\{s, k+1\}$,
the following a priori error estimate holds,
\begin{equation}
    c_0\tau_c^{-1}\tn u - u_h \tnsd 
    \lesssim 
    (
    b_{\infty}h^{r-\onehalf}
    +b_{\infty}h^{k_g+\onehalf}
    + \tau_c^{\onehalf} h^{k_g+\onehalf}
    )
    \|u\|_{r, \Gamma} 
    + \tau_c^{\onehalf} h^{k_g+1} \|f\|_\Gamma,
\end{equation}
\label{thm:apriori_ar}
\end{theorem}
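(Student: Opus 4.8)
The plan is to feed the three error contributions isolated in the Strang-type Lemma~\ref{lem:strang_ar} with the concrete bounds established in Lemmas~\ref{lem:proj_estimates}, \ref{lem:consistency_error}, and~\ref{lem:geometric_error_arbilinear}, and then to consolidate the resulting powers of $h$, $\tau_c$, and $b_\infty$ into the asserted form. Concretely, I would start from estimate~\eqref{eq:strang_ar} applied to the extension $u^e$, reading its right-hand side as a best-approximation-plus-consistency term $\inf_{v\in V_h}\bigl(\tn u^e - v\tnsdast + |v|_{s_h}\bigr)$ together with the two geometric consistency suprema over the bilinear and the linear form.

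For the infimum I would simply take $v = \pi_h u$, the quasi-interpolant from Section~\ref{ssec:quasi-interpolation}. The approximation part is then controlled by Lemma~\ref{lem:proj_estimates}, which gives $\tn u^e - \pi_h u\tnsdast \lesssim b_\infty h^{r-\onehalf}\|u\|_{r,\Gamma}$, while the stabilization semi-norm of the interpolant is handled by the consistency estimate~\eqref{eq:consistency_error} of Lemma~\ref{lem:consistency_error}, namely $|\pi_h u|_{s_h}\lesssim b_\infty\bigl(h^{r-\onehalf}+h^{k_g+\onehalf}\bigr)\|u\|_{r,\Gamma}$. Together these already produce the two $b_\infty$-weighted terms appearing in the final estimate.

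For the two supremum terms I would first invoke the trivial inequality $\tn w\tnsd \leqslant \tn w\tnsdh$ to replace the denominators, so that each quotient is bounded by $|a_h(u^e,w)-a(u,w^l)|/\tn w\tnsd$ and $|l(w^l)-l_h(w)|/\tn w\tnsd$, respectively. Applying the geometric error bounds~\eqref{eq:geometric-err-ah} and~\eqref{eq:geometric-err-lh} of Lemma~\ref{lem:geometric_error_arbilinear} then yields the contributions $\tau_c^{\onehalf}h^{k_g+1}\|u\|_{1,\Gamma}$ and $\tau_c^{\onehalf}h^{k_g+1}\|f\|_\Gamma$.

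It remains to collect everything and tidy the exponents. The bilinear-form geometric term $\tau_c^{\onehalf}h^{k_g+1}\|u\|_{1,\Gamma}$ is absorbed into $\tau_c^{\onehalf}h^{k_g+\onehalf}\|u\|_{r,\Gamma}$ using $h\lesssim 1$ and $\|u\|_{1,\Gamma}\leqslant\|u\|_{r,\Gamma}$ (valid since $r\geqslant 1$), which leaves exactly the three $\|u\|_{r,\Gamma}$-weighted terms and the single $\|f\|_\Gamma$-term of the statement; the left-hand side is read as $\tn u^e - u_h\tnsd$ under the usual identification of $u$ with its extension $u^e$, or equivalently after lifting $u_h$ to $\Gamma$ through the norm equivalences~\eqref{eq:normeql2}--\eqref{eq:normeqh1}. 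I do not anticipate a genuine obstacle here, since every hard estimate has already been carried out in the preceding lemmas and the argument is essentially bookkeeping. The only points requiring a little care are the denominator swap $\tn\cdot\tnsd\leqslant\tn\cdot\tnsdh$ in the consistency suprema and the matching of the various $h$-powers; in particular one should verify that the prefactor delivered on the left by Lemma~\ref{lem:strang_ar} is $c_0\tau_c$, so that the $\tau_c^{-1}$ displayed in the statement is presumably a typographical slip for $\tau_c$.
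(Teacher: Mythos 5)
Your proposal is correct and follows exactly the route the paper intends: the paper gives no separate proof of Theorem~\ref{thm:apriori_ar} beyond the remark that it follows by combining the Strang-type Lemma~\ref{lem:strang_ar} with Lemmas~\ref{lem:proj_estimates}, \ref{lem:consistency_error}, and~\ref{lem:geometric_error_arbilinear}, which is precisely your argument with the choice $v=\pi_h u$. Your side observations are also accurate: the denominator swap via $\tn w\tnsd\leqslant\tn w\tnsdh$ is legitimate, the absorption $h^{k_g+1}\lesssim h^{k_g+\onehalf}$ with $\|u\|_{1,\Gamma}\leqslant\|u\|_{r,\Gamma}$ accounts for the third term, and the factor $c_0\tau_c^{-1}$ in the statement is indeed inconsistent with the $c_0\tau_c$ delivered by Lemma~\ref{lem:strang_ar}, i.e.\ a typographical slip in the paper.
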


\subsection{Construction of alternative ghost penalties}
\label{ssec:alt-ghost-penalties}
To make the theoretical analysis more concrete,
we have focused on the design of one particular ghost penalty by
starting from the volume normal gradient stabilization~\eqref{eq:ineqcontinuous}
originally proposed in~\cite{GrandeLehrenfeldReusken2018,BuermanHansboLarsonEtAl2018}.
Nevertheless, similar to the abstract framework developed
in~\cite{BuermanHansboLarsonEtAl2018,GrandeLehrenfeldReusken2018}, the
presented approach can easily be generalized to cover the design of
alternative ghost penalties.
More precisely, a close inspection of the proofs of 
Lemma~\ref{lem:upwindstability} ($L^2$ coercivity), Theorem~\ref{thm:infsup_ar} (inf-sup condition),
Theorem~\ref{thm:apriori_ar} (a priori error estimate), and 
Theorem~\ref{thm:condition-number} (condition number estimate)
reveals that our theoretical analysis
holds for any ghost penalty $s_h$ which satisfies the following three abstract assumptions:
\begin{itemize}
    \item \textbf{A1)} The ghost penalty $s_h$ extends the $L^2$ in the sense that
    \eqref{eq:l2norm-extension-property} holds,
    \begin{align}
        h^{-1}\|v\|_{\mcT_h}^2 &\lesssim \|v\|_{\mcK_h}^2 + \|[v]\|_{\mcF_h}^2 + |v|_{s_h}^2
    \label{eq:l2norm-extension-property-abstr} 
    \end{align}
    \item \textbf{A2)} The ghost penalty $s_h$ extends the streamline diffusion norm in the sense that
    \eqref{lem:streamline_stab} holds,
    \begin{align}
    \frac{1}{h}\|\phi_b^{\onehalf} \widetilde{b}_h \cdot \nabla v\|_{\mcT_h}^2 
    \lesssim 
    &\|\phi_b^{\onehalf} b_h \cdot \nablash v\|_{\mcK_h}^2 
    + \tn v \tnuph^2
    + |v|_{s_h}^2
    \end{align}
    \item  \textbf{A3)} The ghost penalty $s_h$ is weakly consistent in the sense that
    \eqref{eq:consistency_error} holds,
    \begin{align}
    |\pi_h v |_{s_h} 
    \lesssim b_{\infty} ( 
        h^{r - \onehalf}+ h^{k_g + \onehalf}
        )\| v \|_{r, \Gamma}
        \end{align}
\end{itemize}
For example, a suitable alternative ghost penalty can be constructed starting from
stabilization~$s_h$ introduced in~\cite{LarsonZahedi2019}, which combines
a facet-based ghost penalty term $s_{h,F}$ with a higher-order normal derivative
stabilization $s_{h, \Gamma}$ which is evaluated only on the discrete surface $\Gamma_h$,
\begin{align}
    s_h(v,w) &= s_{h,F}(v,w) + s_{h,\Gamma}(v,w),  
    \\
    s_{h,F}  &=  \sum_{j=1} c_{F,j} h^{2(j-1+\gamma)} ([\partial_n^j v], [\partial_n^j w])_{\mcF_h},
    \label{eq:shF-def}
    \\
    s_{h,\Gamma}  &=  \sum_{j=1} c_{\Gamma,j} h^{2(j-1+\gamma)} (\partial_n^j v, \partial_n^j w)_{\Gamma_h}.
\end{align}
Choosing $\gamma =1$ and extending the summation index $j$ in~\eqref{eq:shF-def} to $0$,
we can proceed as before and combine the equivalent of Lemma~\ref{lem:ineqcontinuous} from~\cite{LarsonZahedi2019} 
with the Oswald interpolant and several standard inverse estimates to establish \textbf{A1}.
As before, with a stabilized $L^2$ estimate in place, we can design suitable ghost penalty candidates for the extension
of the streamline diffusion norm by simply replacing $v_h$ with $\btilde_h\cdot \nabla$ into~\eqref{eq:l2norm-extension-property-abstr}
and switching between $\btilde_h$ and $b_h$ via an equivalent of Lemma~\ref{lem:bh-b_est_upw} to obtain
\begin{align}
    \dfrac{1}{h}\|\phi^{\onehalf} \btilde_h \cdot \nabla v_h\|_{\mcT_h}^2
    \lesssim
    &\|\phi_b^{\onehalf} b_h \cdot \nablash v\|_{\mcK_h}^2 
    + \dfrac{b_{\infty}}{h} |v_h|_{s_h}^2 + \tn v_h \tnuph^2.
\end{align} 
It is then easy to show that
$\widetilde{s}_h(v,w) = b_{\infty}h^{-1} s_h(v, w)$ 
satisfies \textbf{A1)}--\textbf{A3)}.

\begin{remark}
    \label{rem:agglomeration}
The previous alternative ghost penalty also opens up for the possibly
use of agglomeration techniques from~\cite{HeimannEngwerIppischEtAl2013,JohanssonLarson2013}.
Indeed, ghost penalty $s_{h, F}$ could be omitted if elements with
small surface intersection would be merged with elements having a
large surface intersection. Nevertheless, one would need to keep $s_{h, \Gamma}$
to gain control over the variation of the discrete function in surface normal direction.
\end{remark}

\section{Condition number estimate}
\label{sec::condition-number-est}
In the final part of our theoretical analysis, we will investigate the
scaling behavior and geometrical robustness of the condition number of
the system matrix associated with the proposed CutDG method.  More
precisely, we will show that the condition number can be bounded by $C
h^{-1}$ with a constant that is independent of how the surface cuts
the background mesh.  As in our previous
contribution~\cite{GuerkanStickoMassing2020}, our presentation is
inspired by the general approach described in~\cite{ErnGuermond2006}.

To define the system matrix $\mcA$ associated with $A_h$, we first
introduce standard piecewise polynomial basis $\{\phi_i\}_{i=1}^N$
associated with $V_h = \PPdck$ allowing us to write any $v \in V_h$ as $v
= \sum_{i=1}^N V_i \phi_i$ with coefficients $V = \{V_i\}_{i=1}^N \in
\RR^N$.  
Then $\mcA$ is defined by the relation
\begin{align}
  ( \mcA V, W )_{\RR^N}  = A_h(v, w) \quad \foralls v, w \in
  V_h.
  \label{eq:stiffness-matrix}
\end{align}
Thanks to the $L^2$ coercivity of $A_h$ proved in~Section~\ref{ssec:L2-coerc},
the matrix $\mcA$ induces a bijective linear mapping 
$\mcA:\RR^N \to \RR^N$ with its operator norm and condition number given by
\begin{align}
  \| \mcA \|_{\RR^N}
  = \sup_{V \in \RR^N}
  \dfrac{\| \mcA V \|_{\RR^N}}{\|V\|_{\RR^N}}
\quad \text{and}
\quad
  \kappa(\mcA) = \| \mcA \|_{\RR^N} \| \mcA^{-1} \|_{\RR^N},
  \label{eq:operator-norm-and-condition-number-def}
\end{align}
where it is again implicitly understood that the supremum excludes the case $V = 0$.

As a first ingredient, 
we need to recall the well-known estimate
\begin{align}
  h^{d/2} \| V \|_{\RR^N}
  \lesssim
   \| v \|_{L^2(\mcT_h)}
  \lesssim
   h^{d/2} \| V \|_{\RR^N},
  \label{eq:mass-matrix-scaling}
\end{align}
which holds for any quasi-uniform mesh $\mcT_h$ and $v\in V_h$.
The inequalities stated in~\eqref{eq:mass-matrix-scaling} enable us
to pass between the continuous $L^2$ norm of a finite element
functions $v_h$ and the discrete $l^2$ norm of its associated 
coefficient vectors $V$, which will be essential in proving
the following theorem.
\begin{theorem}
  The condition number of the system matrix $\mcA$ associated with~(\ref{eq:Ah-def})
  satisfies
  \begin{align}
   \kappa(\mcA)  \lesssim b_{\infty} (c_0 h)^{-1}
  \end{align}
  where the hidden constant is
  independent of the particular cut configuration.
    \label{thm:condition-number}
\end{theorem}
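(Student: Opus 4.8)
The plan is to bound $\kappa(\mcA) = \|\mcA\|_{\RR^N}\|\mcA^{-1}\|_{\RR^N}$ by estimating the two factors separately, using the inf-sup condition of Theorem~\ref{thm:infsup_ar} for the lower bound (which controls $\|\mcA^{-1}\|$) and a boundedness argument for the upper bound (which controls $\|\mcA\|$). Throughout, the norm equivalence~\eqref{eq:mass-matrix-scaling} is the bridge that lets me translate between the discrete Euclidean norm $\|V\|_{\RR^N}$ on coefficient vectors and the continuous $L^2(\mcT_h)$ norm of the associated finite element function $v$. The key auxiliary fact I will need is a pair of two-sided comparisons between the streamline-diffusion norm $\tn v\tnsdh$ and the full-mesh $L^2$ norm $\|v\|_{\mcT_h}$, valid on the finite-dimensional space $V_h$.

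First I would establish the norm comparison $c_0\tau_c\,\phi_b^{-1/2}\|v\|_{\mcT_h} \lesssim \tn v\tnsdh \lesssim \phi_b^{-1/2}\|v\|_{\mcT_h}$ for $v\in V_h$, up to the correct powers of $h$ and $b_\infty$. The upper bound $\tn v\tnsdh \lesssim \phi_b^{-1/2}\|v\|_{\mcT_h}$ follows by applying the inverse estimates~\eqref{eq:inverse}, \eqref{eq:inverseandtrace}, \eqref{eq:inversegamma}, and~\eqref{eq:inverseE} term by term to each contribution in the definitions~\eqref{eq:upw-norm-def}, \eqref{eq:sd-norm-def} of $\tn\cdot\tnsd$ and in the stabilization~\eqref{eq:sh-def}, absorbing the coefficient-dependent factors via the mesh-resolution assumption~\eqref{eq:mesh-resol-assump}; each surface or face term gets traded for a volume term at the cost of a factor $h^{-1}$, and $\|\nabla v\|_{\mcT_h}\lesssim h^{-1}\|v\|_{\mcT_h}$ handles the gradient pieces. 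The lower bound is exactly Corollary~\ref{cor:sh-stab-norm-est} together with the reaction part of $\tn\cdot\tnuph$, which gives $\phi_b^{-1}\|v\|_{\mcT_h}^2 \lesssim \tfrac{1}{h}\|\tau_c^{-1/2}v\|_{\mcT_h}^2 \cdot (\text{scaling}) \lesssim \tn v\tnsdh^2$ after inserting the advection-dominance bound $\tau_c^{-1}\le\phi_b^{-1}$.

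Next I would combine these. For $\|\mcA^{-1}\|_{\RR^N}$: given $V$ with associated $v$, the inf-sup condition yields a $w\in V_h$ (with associated $W$) such that $c_0\tau_c\tn v\tnsdh \tn w\tnsdh \lesssim A_h(v,w) = (\mcA V, W)_{\RR^N} \le \|\mcA V\|_{\RR^N}\|W\|_{\RR^N}$. Using~\eqref{eq:mass-matrix-scaling} to replace $\|W\|_{\RR^N}$ by $h^{-d/2}\|w\|_{\mcT_h}$, then the upper norm comparison to bound $\|w\|_{\mcT_h}\lesssim \phi_b^{1/2}\tn w\tnsdh$, the factor $\tn w\tnsdh$ cancels, and after invoking the lower norm comparison plus~\eqref{eq:mass-matrix-scaling} once more to pass from $\tn v\tnsdh$ back to $\|V\|_{\RR^N}$, I obtain $\|V\|_{\RR^N} \lesssim b_\infty(c_0 h)^{-1}\|\mcA V\|_{\RR^N}$, i.e.\ $\|\mcA^{-1}\|_{\RR^N}\lesssim b_\infty(c_0 h)^{-1}$ after tracking the $\tau_c$, $\phi_b$, and $h^{d/2}$ factors carefully. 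For $\|\mcA\|_{\RR^N}$: I would bound $(\mcA V,W)_{\RR^N} = A_h(v,w) \lesssim \tn v\tnsdasth\tn w\tnsdh$ via boundedness~\eqref{eq:A_h_boundedness} (augmented by the $s_h$ terms), then apply the upper norm comparison to both factors and~\eqref{eq:mass-matrix-scaling}; this gives $\|\mcA\|_{\RR^N}\lesssim h^{-1}\cdot(\text{bounded scaling})$, so that the product yields $\kappa(\mcA)\lesssim b_\infty(c_0 h)^{-1}$.

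The main obstacle will be the careful bookkeeping of the scaling factors $\tau_c$, $\phi_b = h/b_\infty$, and $h^{d/2}$ across the chain of inequalities, since the target bound $b_\infty(c_0 h)^{-1}$ has a specific dependence on $c_0$ and $b_\infty$ that only emerges after the characteristic-time factors from the inf-sup constant and from both norm comparisons combine correctly; the dimensional factors $h^{d/2}$ from~\eqref{eq:mass-matrix-scaling} must cancel cleanly between numerator and denominator. A secondary subtlety is ensuring the two-sided norm comparison in the first step is genuinely two-sided on all of $V_h$ — the lower bound is where the stabilization $s_h$ is essential and where geometric robustness enters, since without it $\tn\cdot\tnsdh$ would degenerate on small cut configurations; invoking Corollary~\ref{cor:sh-stab-norm-est} is precisely what guarantees the cut-independence of the hidden constant.
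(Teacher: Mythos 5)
Your plan is sound and, once the bookkeeping is repaired, it proves the theorem; but it is not quite the paper's route. The shared skeleton is the standard one: bound $\|\mcA\|_{\RR^N}$ and $\|\mcA^{-1}\|_{\RR^N}$ separately and translate between $\|V\|_{\RR^N}$ and $\|v\|_{\mcT_h}$ via \eqref{eq:mass-matrix-scaling}. The genuine difference is in the inverse bound: the paper never invokes the inf-sup condition of Theorem~\ref{thm:infsup_ar}. It uses only the $L^2$-coercivity of Lemma~\ref{lem:upwindstability} with the single test function $w=v$, combined with the $L^2$-extension property \eqref{eq:l2norm-extension-property}, to get $A_h(v,v)\gtrsim c_0\tau_c\,(\tau_c h)^{-1}\|v\|_{\mcT_h}^2 = c_0h^{-1}\|v\|_{\mcT_h}^2\gtrsim c_0 h^{d-1}\|V\|_{\RR^N}^2$, hence $\|\mcA^{-1}\|_{\RR^N}\lesssim c_0^{-1}h^{1-d}$. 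Your inf-sup detour also works: with $w$ a near-maximizer, Corollary~\ref{cor:sh-stab-norm-est} gives both $\|w\|_{\mcT_h}\lesssim(\tau_c h)^{1/2}\tn w\tnsdh$ and $\tn v\tnsdh\gtrsim(\tau_c h)^{-1/2}\|v\|_{\mcT_h}$; the factor $\tn w\tnsdh$ cancels, the two $(\tau_c h)^{1/2}$ factors cancel against the inf-sup constant $c_0\tau_c$, and you land on the same $c_0^{-1}h^{1-d}$. The paper's argument is lighter (coercivity plus one corollary); yours is heavier but shows the estimate survives in settings where one only has inf-sup stability rather than coercivity. For $\|\mcA\|_{\RR^N}$ the difference is cosmetic: the paper estimates $a_h$ and $s_h$ term by term with the inverse inequalities to get $\|\mcA\|_{\RR^N}\lesssim b_{\infty}h^{d-2}$, while you factor through \eqref{eq:A_h_boundedness}; that is fine as long as you add, as you indicate, $s_h(v,w)\leqslant |v|_{s_h}|w|_{s_h}$ by Cauchy--Schwarz, since \eqref{eq:A_h_boundedness} covers only $a_h$.

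Two details in your write-up do need correction. First, the lower half of your norm comparison, with prefactor $c_0\tau_c\phi_b^{-1/2}$ justified by ``inserting $\tau_c^{-1}\leqslant\phi_b^{-1}$'', uses that inequality in the wrong direction: Corollary~\ref{cor:sh-stab-norm-est} gives $(\tau_c h)^{-1}\|v\|_{\mcT_h}^2\lesssim\tn v\tnsdh^2$ (with no factor $c_0$), and since \eqref{eq:mesh-resol-assump} says $\tau_c^{-1}\leqslant\phi_b^{-1}$, the prefactor $(\tau_c h)^{-1}$ is the \emph{weaker} one, so you cannot upgrade it to $\phi_b^{-1}h^{-1}$. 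Fortunately the weaker prefactor suffices, precisely because of the $\tau_c$-cancellation described above. Second, your announced intermediate bounds, $\|\mcA^{-1}\|_{\RR^N}\lesssim b_{\infty}(c_0h)^{-1}$ and $\|\mcA\|_{\RR^N}\lesssim h^{-1}$, cannot both be right, as their product is not $b_{\infty}(c_0h)^{-1}$; the consistent pair, with the $h^{d/2}$ factors from \eqref{eq:mass-matrix-scaling} tracked, is $\|\mcA\|_{\RR^N}\lesssim b_{\infty}h^{d-2}$ and $\|\mcA^{-1}\|_{\RR^N}\lesssim c_0^{-1}h^{1-d}$, whose product gives $\kappa(\mcA)\lesssim b_{\infty}(c_0h)^{-1}$ as claimed. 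These are exactly the scaling factors you flagged as the main obstacle; once they are tracked as above, your argument closes.
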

\begin{proof}
  We need to bound $\|\mcA\|_{\RR^N}$ and $\|\mcA^{-1}\|_{\RR^N}$.
  
  {\bf Estimate of $\|\mcA\|_{\RR^N}$.}  
  As a first step, we bound $A_h(v,w)= a_h(v,w) + s_h(v,w)$
  in terms of the rescaled $L^2$ norm $h^{-\onehalf} \| \cdot \|_{\mcT_h}$.
  Recalling definition~\eqref{eq:ahI},
  \begin{align}
    a_h(v, w) &= 
    (c_h v b_h \cdot \nablash v , w)_{\mcK_h} 
    -(\{b_h ; n_E\} [v], \{w\})_{\mcE_h}
    + \frac{1}{2}(|\{b_h ; n_E\}|[v], [w])_{\mcE_h} 
    \\
    &= I + II + III,
  \end{align}
  we see that thanks to the inverse estimates~\eqref{eq:inversegamma},
  the first term can be treated as follows:
  \begin{align}
    I 
    &\lesssim
      (\|c\|_{0,\infty,\Gamma} h^{-1} + 
      \|b\|_{0,\infty,\Gamma} h^{-2}
      )
      \|v\|_{\mcT_h} \|w\|_{\mcT_h}
    \\
    &\lesssim
      (\tau_c^{-1} h  + 
      \|b_h\|_{0,\infty,\Gamma_h}
      ) h^{-2}
      \|v\|_{\mcT_h} \|w\|_{\mcT_h}
      \lesssim
      b_{\infty}h^{-2}
      \|v\|_{\mcT_h} \|w\|_{\mcT_h}.
  \end{align}
  Here, assumptions~\eqref{eq:ch_assumption_prel} and~\eqref{eq:bh_assumption_prel} allowed us
  to switch from the discrete to the continuous coefficients in the first step, and in the second
  step, ~\eqref{eq:mesh-resol-assump} was used.
  Next, a successive application of ~\eqref{eq:inverseE} and~\eqref{eq:inverseandtrace}
  leads to
  \begin{align}
  II + III
  \lesssim 
  b_{\infty} h^{-1 } \|v\|_{\partial \mcT_h} \|w\|_{\partial \mcT_h} 
  \lesssim b_{\infty}h^{-2} \|v\|_{\mcT_h} \|w\|_{\mcT_h}.
  \end{align}
  Turning to $s_h(v, w)$, we simply observe that the bound
  \begin{align}
      s_h(v,w)  \lesssim b_{\infty}h^{-2} \|v\|_{\mcT_h} \|w\|_{\mcT_h}.
  \end{align}
  follows immediately from the definition of $s_h$, cf.~\eqref{eq:sh-def},
  and the inverse estimates~\eqref{eq:inverse},~\eqref{eq:inverseandtrace}.
  Collecting all estimates and applying~(\ref{eq:mass-matrix-scaling}), we have
\begin{align}
  A_h(v, w)
    \lesssim  b_{\infty} h^{-2}  \|v\|_{\mcT_h} \|w\|_{\mcT_h}
    \lesssim b_{\infty} h^{d-2}  \|V\|_{\RR^N} \|W\|_{\RR^N},
\end{align}
and therefore we can bound $\| \mcA \|_{\RR^N}$ by
\begin{align}
  \| \mcA \|_{\RR^N}
  &=
    \sup_{V \in \RR^N}
    \sup_{W \in \RR^N}
    \dfrac{(\mcA V, W)_{\RR^N}}{\|V\|_{\RR^N}\|W\|_{\RR^N}}
  = 
  \sup_{V \in \RR^N}
    \sup_{W \in \RR^N}
    \dfrac{A_h(v,w)}{\|V\|_{\RR^N}\|W\|_{\RR^N}}
    \lesssim
      b_{\infty} h^{d-2}.
      \label{eq:norm-A-matrix-est}
\end{align}

{\bf Estimate of $\|\mcA^{-1}\|_{\RR^N}$.}
The discrete coercivity result~(\ref{eq:upwindstability}) combined
with the $L^2$-extension property~(\ref{eq:l2norm-extension-property}) of $s_h$
implies that
\begin{align}
\label{eq:est-Ainv}
  A_h(v, v) \gtrsim
  c_0 \tau_c \tn v \tnuph
  \gtrsim c_0 h^{-1} \| v \|_{\mcT_h}^2
  \gtrsim c_0 h^{d-1} \| V \|_{\RR^N}^2,
\end{align}
and consequently,
  \begin{align}
  \|\mcA V  \|_{\RR^N}
  &= \sup_{W \in \RR^N}
    \dfrac{(\mcA V, W)_{\RR^N}}{\|W\|_{\RR^N}}
    \geqslant
    \dfrac{(AV, V)_{\RR^N}}{\|V\|_{\RR^N}}
    =
    \dfrac{A_h(v,v)}{\|V\|_{\RR^N}}
    \gtrsim 
    c_0 h^{d-1} \|V\|_{\RR^N},
  \end{align}
  which implies that $\| \mcA^{-1}\|_{\RR^N} \lesssim c_0^{-1} h^{1-d}$.
  Combined with~\eqref{eq:norm-A-matrix-est}, we arrive
  at the desired bound
  \begin{align}
    \| \mcA \|_{\RR^N} \| \mcA^{-1} \|_{\RR^N} \lesssim b_{\infty} (c_0h)^{-1}.
  \end{align}
\qed\end{proof}

\section{Numerical results}
\label{sec:numerical-results}
In this final section, we conduct several numerical experiments to
corroborate our theoretical findings.  First, we perform a series of
tests to assess the order of convergence of the proposed CutDG method.
Afterward, the scaling behavior of the condition number
for 3 different $k$ orders is investigated numerically. 
Finally, we examine the geometrical robustness of our method by
studying the sensitivity of the computed errors and the condition
number with respect to the cut configurations.  
Unless stated otherwise, the following values of
the stabilization parameters have been used:
\begin{equation}
    \gamma_0^{b} = \gamma_0^c = \gamma_0 = 5k^2, \quad
    \gamma_n^{b} = \gamma_n^c = \gamma_n = 1,    \quad
    \gamma_1     = \frac{1}{2}.
    \label{eq:penalty-parameters}    
\end{equation}
The open source finite element library deal.II~\cite{dealII94} was used to implement the CutDG method
and conduct all numerical experiments. 

\subsection{Convergence tests}
In the first series of experiments, we examine the experimental
order of convergence (EOC) for orders $k=1,2,3$ over two different geometries
employing the method of manufactured solutions.
For the first geometry $\Gamma$ we choose the unit sphere defined by the $0$ level set
of the scalar function
\begin{align}
    \phi = \sqrt{x^2 + y^2 + z^2} - R, \qquad R = 1.
    \label{eq:level-set-sphere}
\end{align}
The unit sphere is embedded into a cubic domain $\Omega = [-L, L]^3$
with $L=1.21$ with is tessellated by a structured Cartesian mesh $\widetilde{\mcT}_0$.
with an initial subdivision of 12 elements in each coordinate direction.
The second domain consists of a torus
described by the $0$ level set of the scalar function
\begin{align}
    \phi = \sqrt{z^2 + (\sqrt{x^2 + y^2} -R)^2} - r,
    \label{eq:level-set-torus}
\end{align}
with $R=1$ and $r=1/3$.
The surface is immersed into the domain $\Omega = [-W, W] \times [-W, W]\times [-H, H]$,
where $H= \alpha r$, $W = \alpha (R + r)$ and $\alpha = 1.03$.
The initial Cartesian mesh $\widetilde{\mcT}_0$ for $\Omega$ consist
of a subdivision of $[N_x^0, N_y^0, N_z^0] = [12, 12, 3]$ elements in each coordinate direction.

To manufacture a problem that works for both surface geometries,
we set the analytical solution $u$, the advection field $b$, and the reaction coefficient $c$ to
\begin{subequations}
\begin{align}
        u & = \frac{x y}{\pi} \tan^{-1}\left(\frac{z}{\sqrt{\epsilon}}\right), \label{eq:experiments-analytical-solution} \\
        b & = (-y, x, 0) \sqrt{x^2 + y^2}, \label{eq:experiments-advection}                                               \\
        c & = 1. \label{eq:experiments-reaction}
\end{align}
    \label{eq:experiments-data}
\end{subequations}
and computing the right-hand side $f$ according to~\eqref{eq:continuous_problem}.
Note that $\tan^{-1}\left(\frac{z}{\sqrt{\epsilon}}\right)$ varies from $-\pi/2$ to $\pi/2$ over a distance~$\sim \sqrt{\epsilon}$ at the equator: ${\{(x,y,z) \in \Gamma : z=0\}}$.
Thus, the parameter $\epsilon$ allows us to modulate the smoothness of the solution
along the equator and that the solution is discontinuous from a numerical point of view 
until $\sqrt{\epsilon} \sim h$, i.e. until the internal layer is resolved by the mesh.

Now, starting from the initial Cartesian mesh $\widetilde{\mcT}_0$ for each surface,
we generate a series of meshes $\widetilde{\mcT_l}$ with mesh size $h_l$ for $l = 0, 1, \ldots$ by setting
the number of subdivision $[N_x^l, N_y^l, N_z^l]$ in each dimension
to $[N_x^l, N_y^l, N_z^l] = \lfloor 2^{l/2}\rfloor \cdot [N_x^0, N_y^0, N_z^0]$.
On each generated mesh $\widetilde{\mcT}_l$, we 
extract the active mesh $\mcT_l$ and
compute for each order $k$ the numerical solution $u_l^k \in \PP_{\mrm{dc}}^k(\mcT_l)$
and the resulting experimental order of convergence
(EOC) defined by
\begin{align}
  \text{EOC}(l,k) =
  \dfrac{\log(E_{l-1}^k/E_{l}^k)}{\log(h_{l-1}/h_l)}, 
\end{align} 
where $E_l^k = \| e_l^k \| = \| u - u_l^k \|$ denotes the error of the numerical
approximation $u_k^p$ measured in a certain (semi-)norm $\|\cdot \|$.    
The error norms considered in our tests are the $L^2$
norm $\| \cdot \|_{\Gamma}$
and the streamline diffusion norm $\tn\cdot\tn_{\mrm{sd}}$.

For a smooth solution, $u$, corresponding to $\epsilon = 1$ the
experimental orders of convergence for order $k=1,2,3$ are recorded in
Figure~\ref{fig:convergence_plots_eps_large} and confirm the
theoretically predicted convergence rate $k+1/2$ derived in
Section~\ref{sec:aprioriest}.  The $L^2$ convergence rate is
even half an order higher than expected and optimal, but we point out that
this is an often observed phenomenon on structured meshes that cannot
be expected on more general meshes, see~\cite{Peterson1991}.  A
visualization of the discrete solution on the surface and (part of)
the background mesh can be found in
Figure~\ref{fig:numerical-solutions}.

In a second series of experiments,  we study the performance of our
CutDG method in the presence of a sharp internal layer. For brevity,
we only report here in detail the results for sphere geometry as the
torus example produced very similar results.
First, we consider the case $\epsilon = 10^{-3}$ with a boundary layer width of 
$\sqrt{10^{-3}} \approx 0.0316$. Compared to the previous convergence test we now
consider an even larger number of successively finer meshes $\{\widetilde{\mcT}_k\}_{k=0}^9$
which guarantees that the internal layer is eventually resolved for the last 2 to 3 meshes.
This is also confirmed by the observed order of convergence displayed in 
Figure~\ref{fig:convergence_plots_sphere_eps_small} (top). Here, the convergence
rate behaves more erratic in the underresolved regime but eventually
approaches the theoretically predicted rates.

Finally, we consider the case $\epsilon = 10^{-6}$. Here, using only uniform mesh refinements,
we are not able to resolve the internal layer and the analytical solution behaves practically
like a discontinuous function from a numerical point of view.
This explains also the drastically reduced convergence rate reported in 
Figure~\ref{fig:convergence_plots_sphere_eps_small} (bottom).
Also, similar to standard fitted upwind DG methods, the numerical
solution exhibits the typical oscillatory behavior, known as the Gibbs phenomenon,
in the vicinity of the layer.  How to combine the proposed
stabilized CutDG framework with various shock capturing or limiter
techniques~\cite{Shu2009,Shu2016} to control spurious oscillation near
discontinuities will be part of our future research.

\begin{figure}[htb]
  \begin{subfigure}{1.0\textwidth}
  \begin{center}
    \includegraphics[width=0.47\textwidth,page=1]{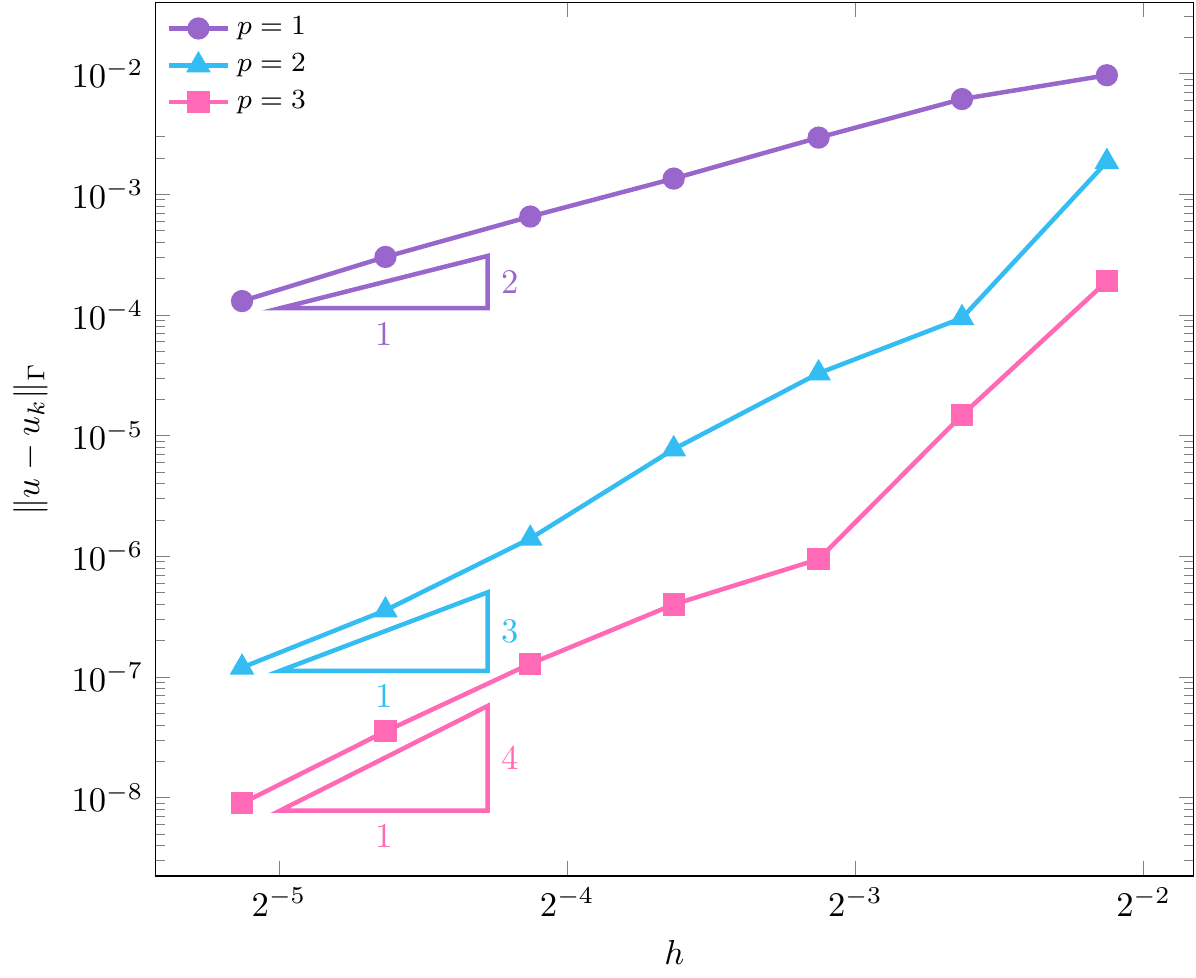}
    \hspace{0.01\textwidth}                                                 
    \includegraphics[width=0.47\textwidth,page=2]{convergence_plots.pdf}
  \end{center}
\end{subfigure}
  \begin{subfigure}{1.0\textwidth}
\vspace{2em}
  \begin{center}
    \includegraphics[width=0.47\textwidth,page=3]{convergence_plots.pdf}
    \hspace{0.01\textwidth}                                                 
    \includegraphics[width=0.47\textwidth,page=4]{convergence_plots.pdf}
  \end{center}
\end{subfigure}
\caption{Convergence rate plots for the sphere (top) and torus (bottom) test cases with 
$\epsilon = 1$. Both the
  $L^2$ (left) and streamline diffusion (right) error plots
  show optimal convergence rates.}
  \label{fig:convergence_plots_eps_large}
\end{figure}

\begin{figure}[htb]
    \begin{subfigure}{.49\columnwidth}
        \includegraphics[width=\columnwidth]{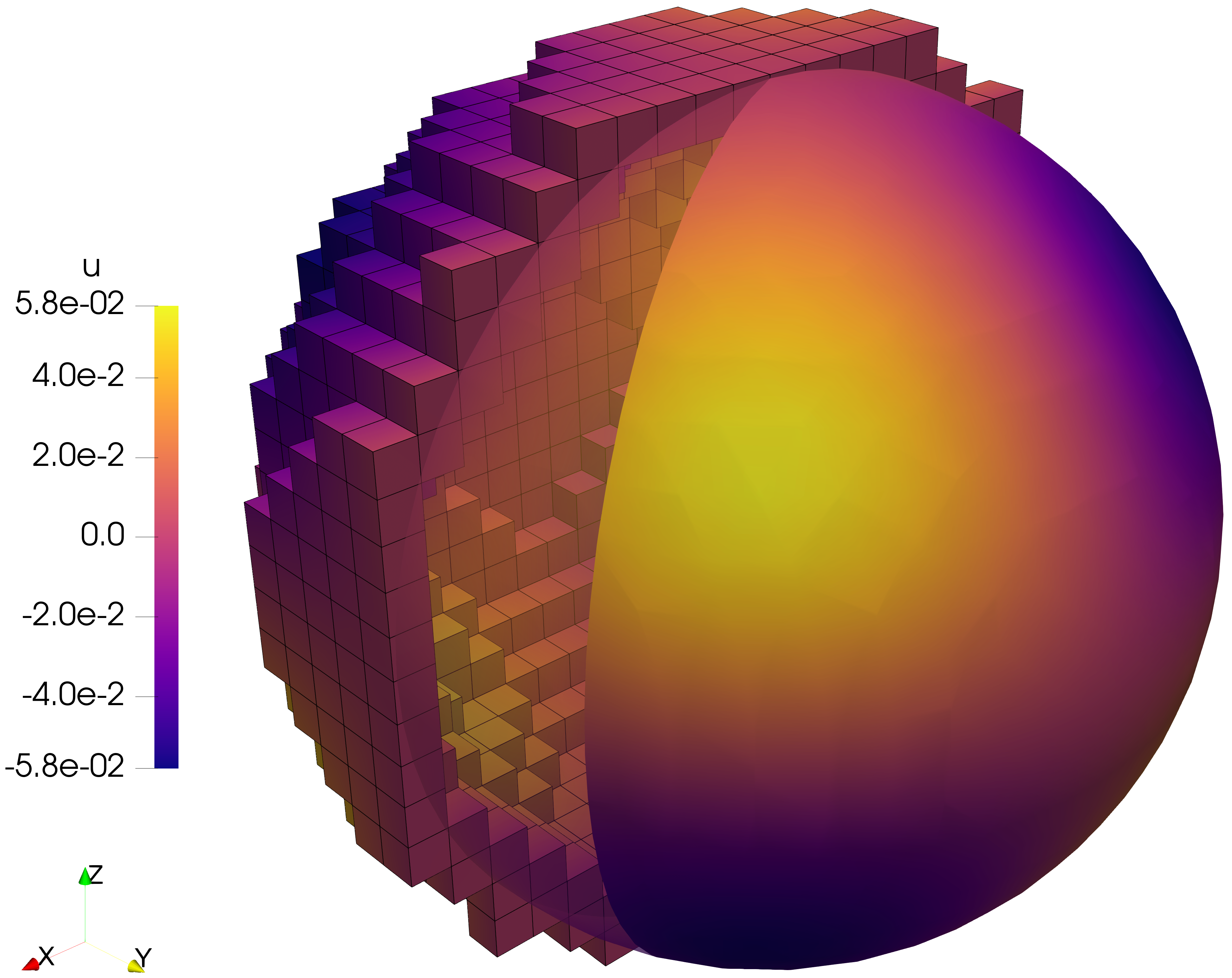}
    \end{subfigure}
    \begin{subfigure}{.49\columnwidth}
        \includegraphics[width=\columnwidth]{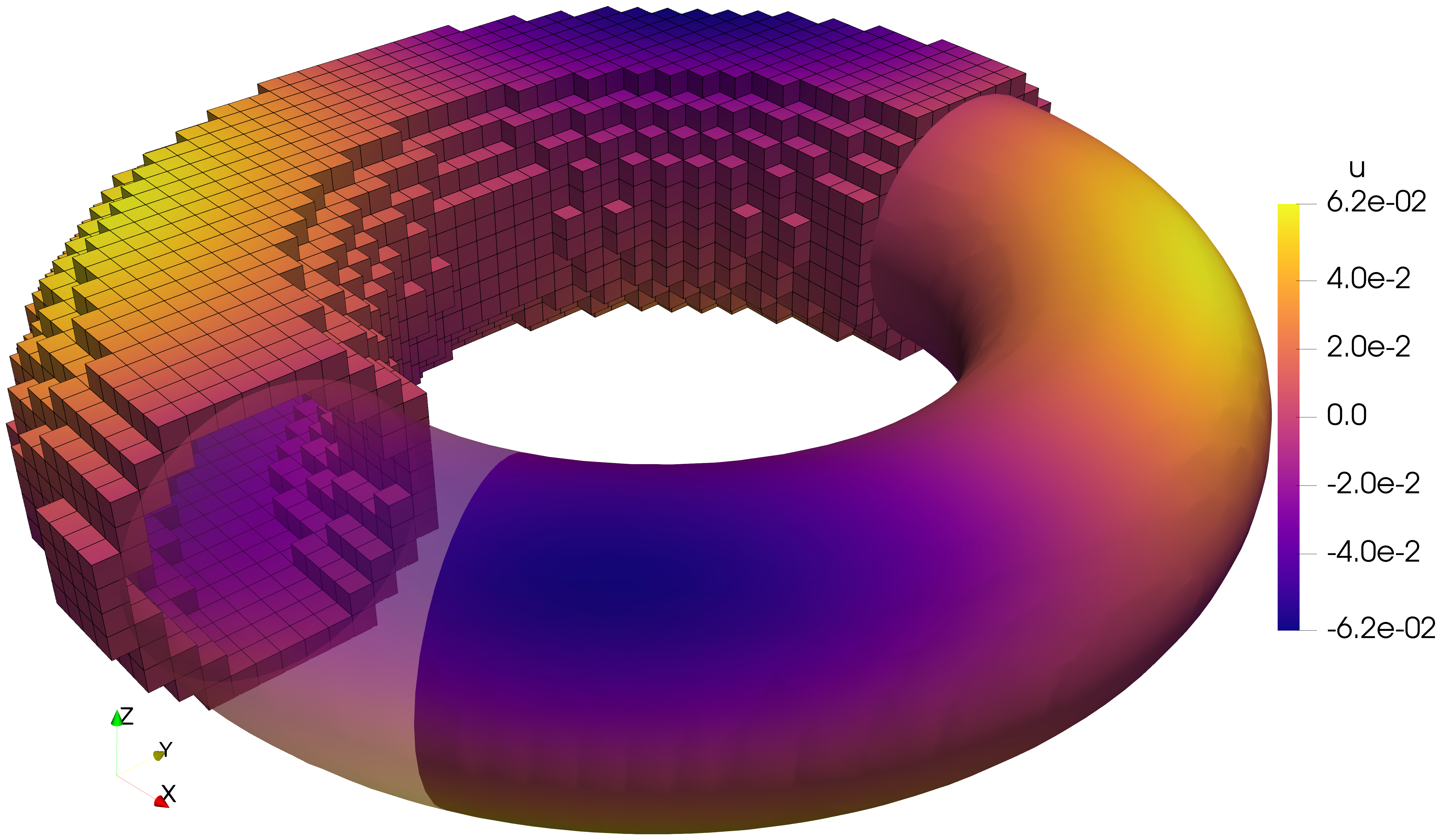}
    \end{subfigure}
    \caption{Numerical solutions of problem~\eqref{eq:experiments-data} with $\epsilon = 1$ computed on the unit sphere (left) and torus (right),
    together with part of the background mesh. 
    \label{fig:numerical-solutions}}
\end{figure}

\begin{figure}[htb]
    \begin{subfigure}{.49\columnwidth}
        \includegraphics[width=\columnwidth]{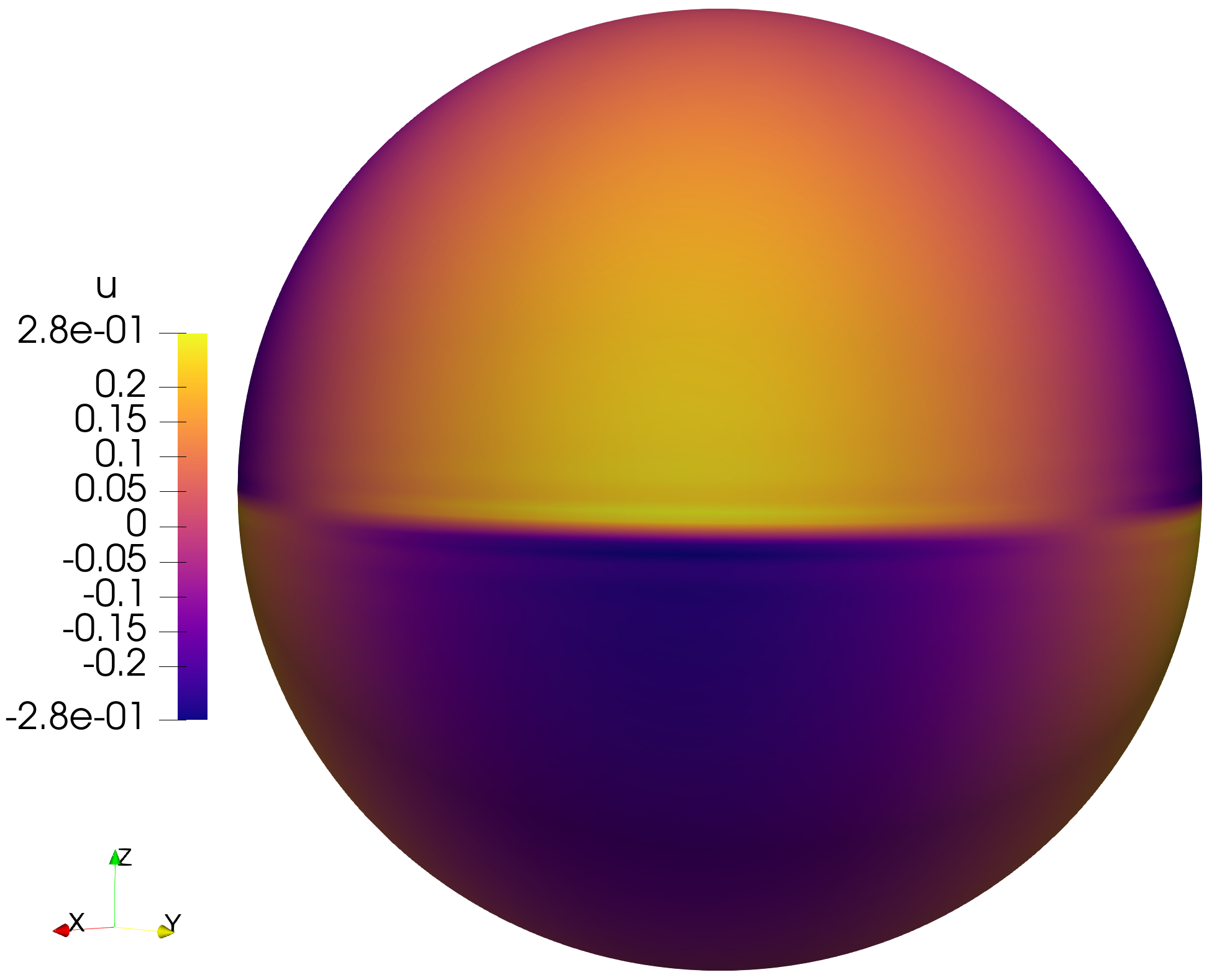}
    \end{subfigure}
    \begin{subfigure}{.49\columnwidth}
        \includegraphics[width=\columnwidth]{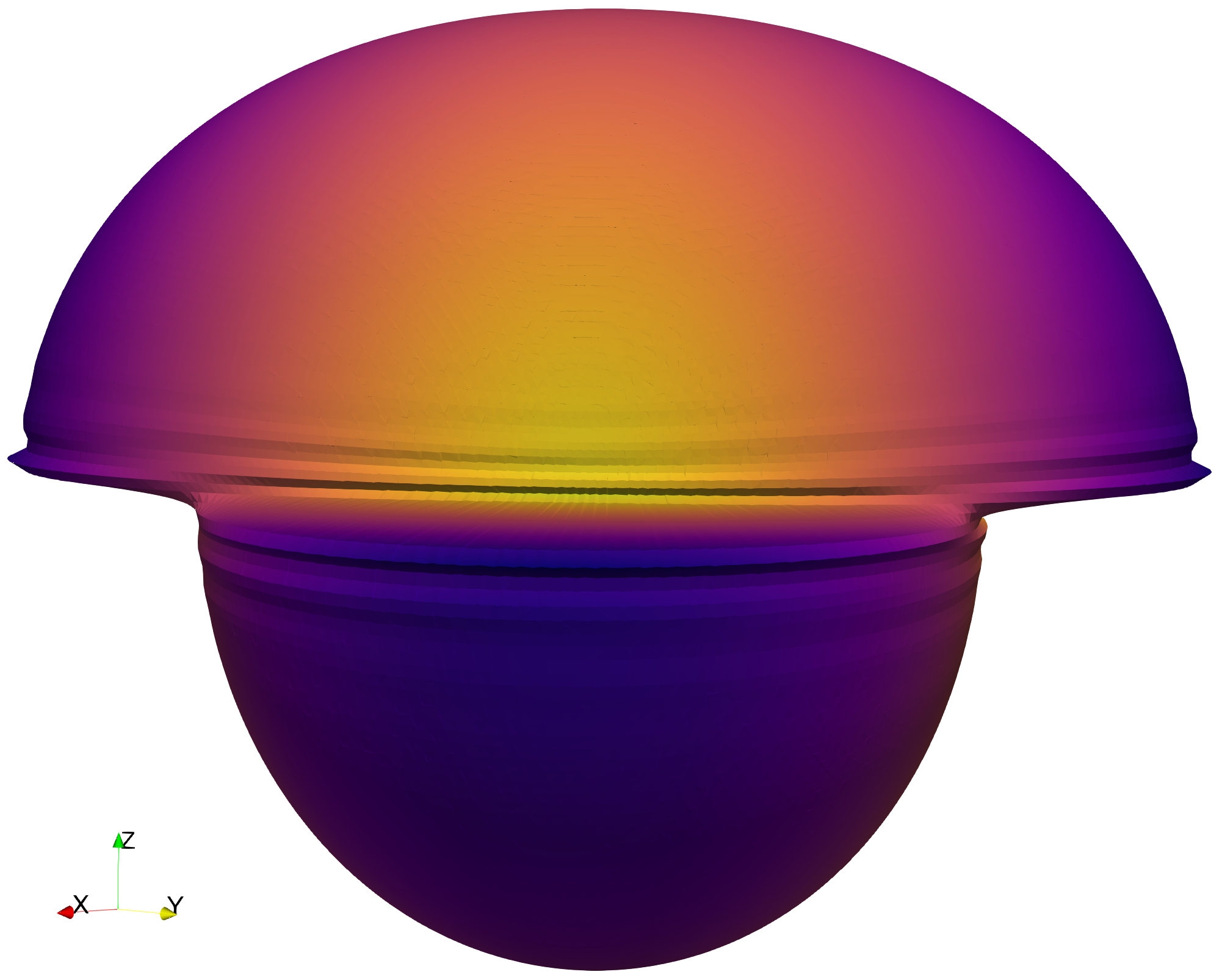}
    \end{subfigure}
    \caption{Numerical solution of problem~\eqref{eq:experiments-data} with $\epsilon = 10^{-6}$ computed on the unit sphere (left). Warping the surface $\Gamma$ in the normal direction using the solution $u$ exhibits the strong gradient in the characteristic
    layer around the equator as well as the localized Gibbs oscillations of the solution (right). 
    \label{fig:numerical-solutions-strong-grad}}
\end{figure}

\begin{figure}[htb]
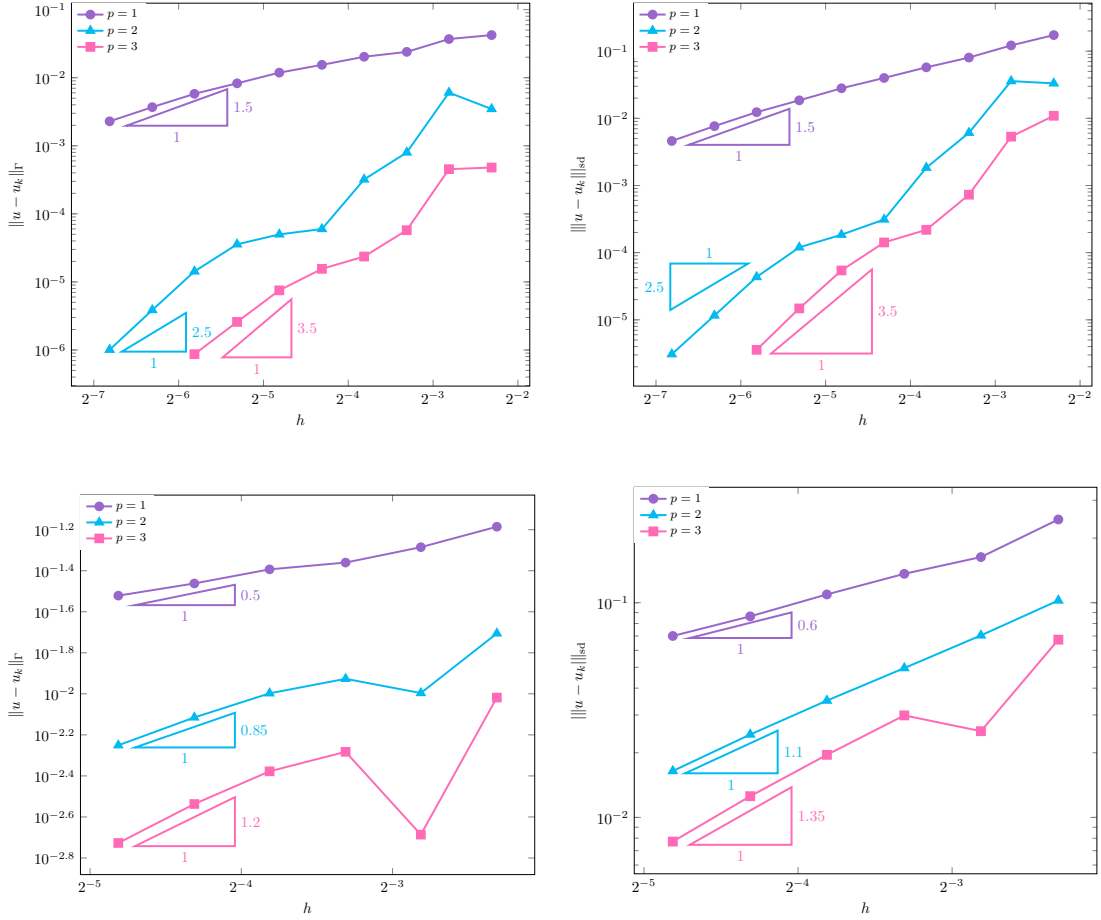

  \begin{subfigure}{1.0\textwidth}
\vspace{2em}
  \begin{center}
    \includegraphics[width=0.47\textwidth,page=5]{convergence_plots.pdf}
    \hspace{0.01\textwidth}                                                 
    \includegraphics[width=0.47\textwidth,page=6]{convergence_plots.pdf}
  \end{center}
\end{subfigure}
  \begin{subfigure}{1.0\textwidth}
\vspace{2em}
  \begin{center}
    \includegraphics[width=0.47\textwidth,page=7]{convergence_plots.pdf}
    \hspace{0.01\textwidth}                                                 
    \includegraphics[width=0.47\textwidth,page=8]{convergence_plots.pdf}
  \end{center}
\end{subfigure}
  \caption{Convergence rates in the $L^2(\Omega)$ (left) and streamline
    diffusion (right) norms for the sphere example with $\epsilon = 10^{-3}$ (top) and $\epsilon = 10^{-6}$ (bottom).}
  \label{fig:convergence_plots_sphere_eps_small}
\end{figure}

\subsection{Condition number}
Next, we study the scaling of the condition number $\kappa(\mcA)$
of the system matrix $\mcA$ with respect to the mesh size $h$
for orders $k \in \{1,2,3\}$.
We consider the same experimental setup as for the sphere example.
To estimate the condition number for a given mesh $\mcT_l$ and
order $k$, we compute numerically the largest and smallest singular value of $\mcA$
using the SLEPc~\cite{HernandezRomanVidal2005},
an open-source library for the solution of large-scale sparse eigenvalue problems
which is closely integrated into deal.II.
The condition number as a function of mesh size is shown in Figure~\ref{fig:cond-vs-mesh-size},
for a few refinements and different orders.
Note that since the computation of singular values is computationally heavy and challenging, we were not able to perform equally many condition number calculations
for different orders.
As expected from Theorem~\ref{thm:condition-number},
we see that the condition number grows proportionally to $h^{-1}$.
\begin{figure}[htb]
    \centering
    \includegraphics[width=.35\paperwidth]{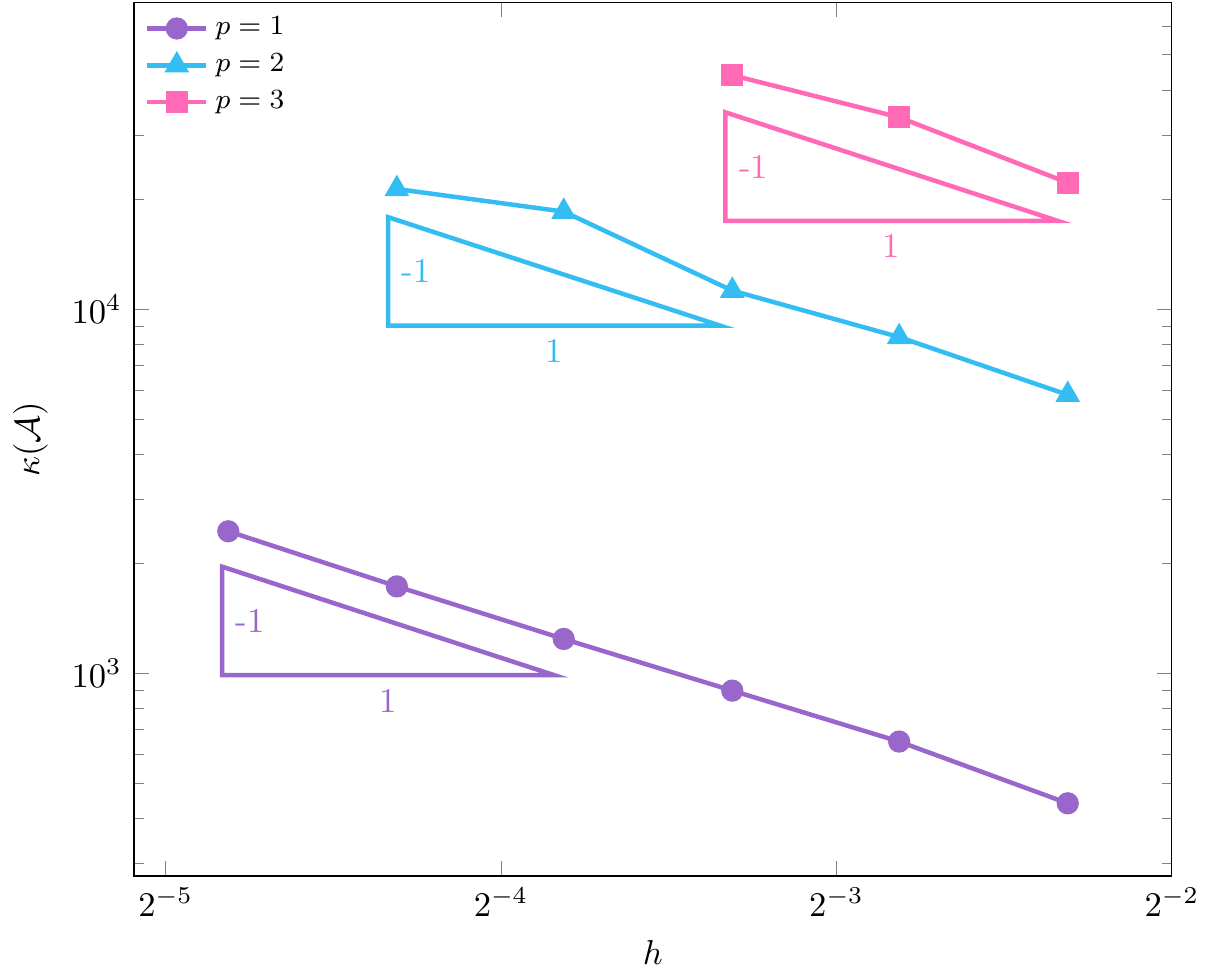}
    \caption{Condition number as a function of mesh size, for the test case with $\Gamma$ being a sphere.\label{fig:cond-vs-mesh-size}}
\end{figure}

\subsection{Geometrical robustness}
Finally, the last set of numerical experiments
is designed to test the geometrical robustness
of our proposed CutDG method and to highlight the importance
of the ghost penalty.

To test if the method yields robust approximation errors 
irrespective of the particular cut configuration,
we successively compute the numerical solution 
for the unit sphere test case from the previous section with $\epsilon = 1$
while shifting the background mesh by
\begin{equation}
    s_\delta = \delta \frac{h}{\sqrt{3}} (1, 1, 1), \quad \delta \in [0,1),
    \label{eq:mesh-perturbation}
\end{equation}
Here, $\delta$ is a parameter that quantifies the shift.
The problem is solved for $500$ uniformly spaced values of $\delta$ in
the interval $[0, 1)$ using polynomial order $k=2$.  In this interval,
the linear system has between $11232$ and $13014$ degrees of freedom.
For each sample, we compute both the discretization error measured in
the streamline-diffusion norm $\tn \cdot \tnsdh$ as well as the
condition number and plot them against $\delta$, see Figure~\ref{fig:mesh_perturbation}.
The resulting error sensitivities shown in Figure~\ref{fig:mesh_perturbation} (left)
include the results for the ``default'' parameters~\eqref{eq:penalty-parameters} as well as
the results when individual ghost penalty parameters are set to zero.
First, we see that when the penalty parameters have the default values from~\eqref{eq:penalty-parameters}
the error is independent of $\delta$.
When we set $\gamma_n = 0$, the error fluctuates rapidly
and increases by a factor higher than $10^4$ at some values of $\delta$.
If we instead set $\gamma_0 = 0$,
the error is almost the same as for the parameters in~\eqref{eq:penalty-parameters},
except for a few spikes, where the error increases significantly.
When setting $\gamma_1 = 0$,
the error is surprisingly robust and practically constant over $\delta$ but slightly higher than for the parameters in~\eqref{eq:penalty-parameters}.
It should be noted that, for each $\delta$,
the linear system was here solved with an iterative solver (bicgstab).
When setting some of the penalty parameters to zero,
the linear system might be singular.
Thus, what we have presented as the error in Figure~\ref{fig:mesh_perturbation}
is the solution from the iterative solver after a maximum of $10^4$ iterations,
even if the solver did not converge.
\begin{figure}[htb]
        \includegraphics[width=0.50\columnwidth,page=1]{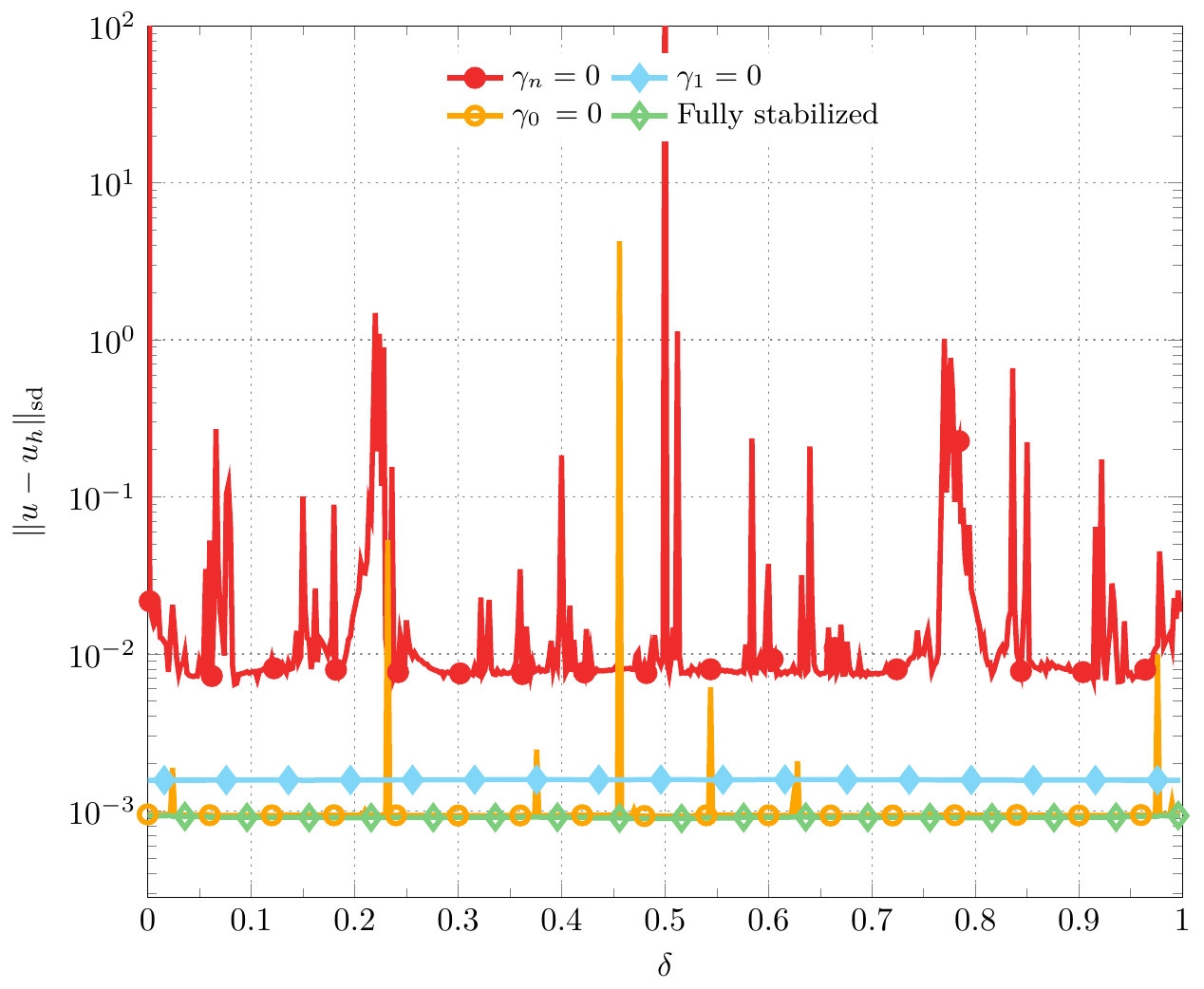}
        \includegraphics[width=0.50\columnwidth,page=2]{perturbation_experiments.pdf}
    \caption{Error in $\tn \cdot \tn_{\mrm{sd}}$-norm (left) and
    condition number (right) as a function of the mesh perturbation
    parameter $\delta$ (cf. \eqref{eq:mesh-perturbation})
    for different choices of stability parameters. \label{fig:mesh_perturbation}
    }
\end{figure}

As expected, the condition number is more mesh-dependent when we
significantly decrease the stabilization parameters.  When we set
$\gamma_n = 10^{-4}$ the condition number increases by almost 3 orders
of magnitude, but is still almost constant.  If we instead set
$\gamma_0 =10^{-8}$, we see that the condition number becomes huge and
also oscillates rapidly.  Here, some values of the condition number
are missing.  The reason is that the matrix, $\mathcal{A}$, is so
ill-conditioned that the used singular value solver did not converge
when solving for the smallest singular value.  Of the various penalty
parameters, $\gamma_1$ appears to be the one that has the smallest
effect.  When we set $\gamma_1 = 0$ we see that the condition number
becomes slightly more mesh-dependent compared to the parameters in
\eqref{eq:penalty-parameters}, but the variation is very slight.

\section{Conclusion and outlook}
In this paper, we proposed a novel cut discontinuous Galerkin method
for stationary advection-reaction problems on surfaces.  Our main goal
was to generalize the classical upwind-flux DG formulation to the
setting of embedded surfaces by extending ideas from the stabilized,
continuous Galerkin-based CutFEM
framework~\cite{GrandeLehrenfeldReusken2018,BuermanHansboLarsonEtAl2018}
for surface PDEs.
We carefully designed suitable stabilization forms for higher-order DG methods which allowed
us to establish geometrically robust stability, a priori error, and condition
number estimates by using enhanced $L^2$ and streamline-diffusion type norms.
Moreover, the presented stabilization approach allows for a relatively
easy extension of existing fitted discontinuous Galerkin
software to handle unfitted geometries. Implementation of
the stabilization operator~\eqref{eq:sh-def} should be straight-forward 
in most DG software frameworks, and thus
only additional quadrature routines such
as~\cite{MuellerKummerOberlack2013,Saye2015,Lehrenfeld2016,FriesOmerovic2015,FriesOmerovicSchoellhammerEtAl2017}
are needed to handle the numerical integration on cut
geometries.

In this work, we focused on the prototype problems~\eqref{eq:continuous_problem}
to lay out the main ideas in the simplest possible setting,
but our method can be readily employed 
in more complex simulation scenarios,
including advection-dominated advection-diffusion-reaction problems on surfaces
when combined with~\cite{BurmanHansboLarsonEtAl2016a},
or for corresponding mixed-dimensional problems
in combination with ~\cite{Massing2017,GuerkanMassing2019,GuerkanStickoMassing2020}. 
In~\cite{GuerkanStickoMassing2020}
we already outlined relevant extensions and research directions 
for the proposed stabilized CutDG formulation for advection-dominated bulk problems.
In particular, we demonstrated how the stabilization
approach can be combined with explicit Runge--Kutta methods to solve
the time-dependent advection-reaction problem under a standard
hyperbolic CFL condition. The research directions and method extensions
from~\cite{GuerkanStickoMassing2020}
are equally applicable to CutDG formulation in this work.
It is part of our ongoing research to combine the presented
stabilized CutDG framework with the general symmetric stabilization
approach proposed in~\cite{BurmanErnFernandez2010}
to devise an explicit Runge--Kutta method for first-order Friedrichs-type
operators covering advection-reaction problems
as well as linear wave propagation phenomena.

Moreover, for the numerical discretization of nonlinear scalar
hyperbolic conservation laws on surfaces,
a major research question is to understand how
our proposed CutDG stabilization can be combined
with the discontinuous Galerkin Runge--Kutta methods originally
developed in~\cite{CockburnShu1991,CockburnShu1989,CockburnLinShu1989,CockburnHouShu1990}. 
To maintain properties such as local conservation, monotonicity,
total variation diminishing (TVD) stability 
often required from numerical methods for hyperbolic conservation
laws, modifications of the proposed stabilizations need to be
developed. Here, it would be interesting to investigate whether and
how the approaches developed in
\cite{MayStreitbuerger2022,EngwerMayNuessingEtAl2020} can be carried
over to the setting of embedded surfaces.

\section*{Acknowledgments}
The authors gratefully acknowledge financial support 
from the Swedish eSSENCE program of e-Science
and from the Swedish Research Council under Starting Grant 2017-05038.
We also wish to thank the anonymous reviewers
for their valuable comments which helped us to improve the quality of this paper.

\appendix

\section{Proofs of some geometric estimates}
\label{sec:appendix}

\begin{proof}[Lemma~\ref{lem:coeff-est}]
To establish~\eqref{eq:bh_assumption}, we simply combine estimates~\eqref{eq:B_estimates}, \eqref{eq:B_det_estimates}, and~\ref{eq:bh_assumption_prel} to obtain 
\begin{align}
    \||B| B^{-1} b^e - b_h\|_{L^\infty(\mcK_h)} 
    &\lesssim 
    \||B| B^{-1} b^e - B^{-1} b^e\|_{L^\infty(\mcK_h)} + \|B^{-1} b^e - b_h\|_{L^\infty(\mcK_h)}
    \\
    &\lesssim \||B| - 1\|_{L^\infty(\mcK_h)}\|B^{-1} b^e\|_{L^\infty(\mcK_h)} + \|B^{-1} b^e - b_h\|_{L^\infty(\mcK_h)}
    \\
    &\lesssim h^{k_g+1} + \|B^{-1} b^e - b_h\|_{L^\infty(\mcK_h)}
    \\
    &\lesssim h^{k_g+1} + \|(B^{-1} -\Psh \Ps ) b^e\|_{L^\infty(\mcK_h)} + \|\Psh \Ps b^e - b_h\|_{L^\infty(\mcK_h)}\\
    &\lesssim h^{k_g+1} + \|\Psh b^e - b_h\|_{L^\infty(\mcK_h)}.
\end{align}
Inequalities~\eqref{eq:ch_assumption} and~\eqref{eq:fh_assumption}
can be proved similarly.
\qed\end{proof}

\begin{proof}[Lemma~\ref{lem:est-bh-jump}]
    We start the proof by noting that in contrast to their discrete counterparts $n_{E}^{\pm}$, 
    the two co-normal fields $n_{E^l}^{\pm}(x) \in T_x \Gamma$ associated with the \emph{lifted}
    edge $E^l$ are in fact co-planar and satisfy $n_{E^l}^+ = - n_{E^l}^-$
    and hence $[b^e;n_{E^l}^e] = 0$.
    Thus 
    \begin{align}
        \| [b_h; n_E] \|_{L^{\infty}(\mcE_h)} 
        \leqslant
        \| [b_h - b^e; n_E] \|_{L^{\infty}(\mcE_h)} 
        +\| [b^e; n_E-n_{E_l}^e] \|_{L^{\infty}(\mcE_h)} 
        = I + II.
    \end{align}
    To estimate $I$, simply observe that 
    $b^{e,\pm} \cdot n_E^{\pm} =  (\Psh b^{e})^{\pm} \cdot n_E^{\pm}$,
    which thanks to assumption~\eqref{eq:bh_assumption_prel} implies 
    that 
    \begin{align}
       I 
        &\lesssim
        \| (b_h - \Psh b^e)^+ \cdot n_E^+] \|_{L^{\infty}(\mcE_h)} 
        + \| (b_h - \Psh b^e)^- \cdot n_E^-] \|_{L^{\infty}(\mcE_h)} 
        \lesssim C_b h^{k_g+1}.
    \end{align}
    Next, using the fact that 
    $b^e \cdot n_{E}^{\pm} 
    = \Ps^e b^e \cdot n_{E}^{\pm} 
    = b^e \cdot \Ps^e n_{E}^{\pm} 
    $ thanks to the self-adjointness of $\Ps$,
    the remaining term $II$ can be bound by
    \begin{align}
       II
       &\leqslant
       \|b^e\cdot (\Ps^e  n_E^+
       - n_{E^l}^{+,e})\|_{L^{\infty}(\Gamma_h)}
       + \| b^e\cdot
       (\Ps^e n_E^-
       - n_{E^l}^{-,e}) \|_{L^{\infty}(\Gamma_h)} =
       II_a + II_b.
    \end{align}
    Clearly, it is sufficient to provide an estimate
    for $II_a \leqslant b_c \|\Ps^e  n_E^+ - n_{E^l}^{+,e}\|_{L^{\infty}(\Gamma_h)}$
    since $II_b$ can be handled in the exact same manner.
    We introduce a moving orthonormal basis $t_1(x), \ldots, t_{d-1}(x) \in T_x E$
    such that $\{t_1, \ldots, t_{d-1}, n_{E}, \nsh\}$ is a positively oriented orthonormal basis of $\RR^{d+1}$.
    Then $-n_E = t_1 \wedge t_2 \wedge \ldots \wedge t_{d-1} \wedge \nsh$.
    Here and in the following, we omit the superscripts~$^{\pm}$ and~$^e$ to ease the notation.
    After a rigid motion, we can safely assume that 
    $\{t_1, \ldots, t_{d-1}, n_{E}, \nsh\} = \{e_1, \ldots, e_{d-1}, e_d, e_{d+1}\}$.
    Note that in this orthonormal basis, we have that
    $\ns^i = \ns \cdot e_i = (\ns - \nsh) \cdot e_i = \mcO(h^{k_g})$
    for $i=1,\ldots, d$ and hence $\ns^{d+1} = \sqrt{1+\mcO(h^{2k_g})} = 1 + \mcO(h^{2k_g})$.
    Now we expand $\Ps n_E=\Ps e_{d}$ in the chosen orthonormal basis leading to
       \begin{align}
        \label{eq:Ps_ed_expansion}
        \Ps e_{d} \cdot e_i
        = \delta_{d,i} -\ns^d \ns^i 
        = \begin{cases}
            \mcO(h^{2 k_g}), &i = 1,\ldots, d-1
            \\
            1 + \mcO(h^{2 k_g}), & i = d
            \\
            - \ns^d \ns^{d+1},   & i = d+1.
        \end{cases}
       \end{align}
    Next, using the differential $Dp = \Ps(\Id - \rho \mcH)\Psh$ of the closest point projection $p$, we define 
    \begin{align}
    - \widetilde{n}_{E^l} =  Dp e_1 \wedge Dp e_2 \wedge Dp e_{d-1} \wedge \ns,
    \end{align}
    which is a \emph{non-normalized}, outward pointing co-normal field on the lifted edge $E^l$; that is,
    $n_{E^l} = \lambda \widetilde{n}_{E^l}$ for some $\lambda > 0$.
    Recalling the general definition of the outer product, we see that
    \begin{align}
    -\widetilde{n}_{E^l} \cdot e_i 
    &= 
    \det(Dp e_1, Dp e_2, Dp e_{d-1}, \ns, e_i)
    \\
    &= 
    \det(\Ps e_1, \Ps e_2, \Ps e_{d-1}, \ns, e_i) + \mcO(h^{k_g+1})
    \\
    &=
    \det(e_1, e_2, e_{d-1}, \ns, e_i) + \mcO(h^{k_g+1})
    \\
    &=
    \begin{cases}
        0 + \mcO(h^{k_g+1})  &i=1,\ldots,d-1 \\
        -\ns^{d+1} + \mcO(h^{k_g+1}) = -1 + \mcO(h^{k_g+1}) &i=d \\
        \ns^{d}  + \mcO(h^{k_g+1}) &i=d+1.
    \end{cases}
    \end{align}
    As all coefficients scale like at least $\mcO(h^{k_g})$ except for $i=d$, we see that
    $\| \widetilde{n}_{E^l}\|_{\RR^{d+1}} = 1 + \mcO(h^{2k_g})$, hence 
    $ \lambda =\| \widetilde{n}_{E^l}\|_{\RR^{d+1}}^{-1} = 1 + \mcO(h^{2k_g})$ and
    consequently we obtain the following estimates for the coefficients of $n_{E^l}$
    with respect to the orthonormal base $\{e_1\ldots,e_{d+1}\}$,
    \begin{align}
        \label{eq:n_El_expansion}
     -n_{E^l} 
     = 
     -\lambda\widetilde{n}_{E^{l}}   
     =-(1 + \mcO(h^{2k_g}))\widetilde{n}_{E^{l}}   
    &= \begin{cases}
        0 + \mcO(h^{k_g+1})  &i=1,\ldots,d-1 \\
        -1 + \mcO(h^{k_g+1}) &i=d, \\
        \ns^{d}  + \mcO(h^{k_g+1}) &i=d+1.
    \end{cases}
    \end{align}
    As a result, comparing~\eqref{eq:Ps_ed_expansion} and~\eqref{eq:n_El_expansion} yields
    \begin{align}
        (\Ps e_{d} - n_{E^l}^e)  \cdot e_i
        = \begin{cases}
            \mcO(h^{k_g+1}) &i=1,\ldots,d
            \\
            \ns^d(1-\ns^{d+1}) + \mcO(h^{k_g+1}) =  \mcO(h^{k_g+1}), &i = d+1,
        \end{cases}
    \end{align}
   which immediately implies that 
    $II_a \leqslant b_c h^{k_g+1}$.
   This concludes the proof.
\qed\end{proof}


\bibliographystyle{elsarticle-num-names.bst}
\bibliography{bibliography-zotero}

\end{document}